\documentclass{article}
\usepackage[utf8]{inputenc}

\usepackage[dvipsnames]{xcolor}
\usepackage{latexsym,amssymb}
\usepackage{amsmath,amsthm,amsxtra,amsbsy,accents}
\usepackage{mathtools}
\mathtoolsset{showonlyrefs}
\usepackage[scr=boondoxo]{mathalpha}
\usepackage{bm}
\usepackage{hyperref}
\usepackage{subfig}
\usepackage[a4paper,left=2.5cm,top=3cm,right=3cm,bottom=4cm]{geometry}
\usepackage{a4wide}
\usepackage{authblk}
\usepackage{ulem}  

\renewcommand*\arraystretch{1.6}

\setlength{\jot}{12pt}

\usepackage{enumitem}

\usepackage{upgreek}

\usepackage[only,llbracket,rrbracket]{stmaryrd}

\newcommand{\mala}[1]{\mbox{\large $#1$}}

\DeclareMathAlphabet{\mathup}{OT1}{\familydefault}{m}{n}

\newcommand{\dd}{\mathop{}\mathrm{d}} %von Gianna

\newcommand{\lam}{\lambda}
\newcommand{\eps}{\varepsilon}

\renewcommand{\phi}{\varphi}

\newcommand{\tils}{\tilde{\sigma}}
\newcommand{\tilu}{\tilde{u}}
\newcommand{\tilv}{\tilde{v}}

\newcommand{\tilp}{\tilde{p}}
\newcommand{\tilq}{\tilde{q}}
\newcommand{\eni}{\textit{(i)}}
\newcommand{\enii}{\textit{(ii)}}
\newcommand{\eniii}{\textit{(iii)}}

\def\Xint#1{\mathchoice
	{\XXint\displaystyle\textstyle{#1}}%
	{\XXint\textstyle\scriptstyle{#1}}%
	{\XXint\scriptstyle\scriptscriptstyle{#1}}%
	{\XXint\scriptscriptstyle\scriptscriptstyle{#1}}%
	\!\int}
\def\XXint#1#2#3{{\setbox0=\hbox{$#1{#2#3}{\int}$}
		\vcenter{\hbox{$#2#3$}}\kern-.5\wd0}}

\def\dashint{\Xint-}
\newcommand{\mint}{\dashint}

\makeatletter
\newcommand{\customlabel}[2]{%
   \protected@write \@auxout {}{\string \newlabel {#1}{{#2}{\thepage}{#2}{#1}{}} }%
   \hypertarget{#1}{}
}
\makeatother

\numberwithin{figure}{section}

\def\AAA{{\mathcal A}}

\def\rmj{{\mathrm j}}

 \def\N{{\mathbb N}}

 \def\Z{{\mathbb Z}}

\newcommand{\R}{\mathbb{R}}

\makeatletter
\newcommand{\bnabla}{{\mathpalette\b@nabla\relax}}
\newcommand\b@nabla[2]{%
        \setbox\z@=\hbox{$\m@th#1\bigtriangledown$}%
        \ht\z@.7\ht\z@
        \raise\dp\z@\box\z@
}
\makeatother

\newtheorem{theorem}{Theorem}[section]

\newtheorem{lemma}[theorem]{Lemma}
\newtheorem{cor}[theorem]{Corollary}

\newtheorem{prop}[theorem]{Proposition}
\newtheorem{rem}[theorem]{Remark}
\newtheorem{rem*}{Remark}

\numberwithin{equation}{section}

\makeatletter
\newcommand{\oset}[3][0ex]{%
  \mathrel{\mathop{#3}\limits^{
    \vbox to#1{\kern-2\ex@
    \hbox{$\scriptstyle#2$}\vss}}}}
\makeatother

\usepackage{trimclip}
\makeatletter
\NewDocumentCommand{\rmjmath}{}{\mathbin{\mathpalette\eplus@\relax\mspace{1mu}}}
\newcommand{\eplus@}[2]{\clipbox{-.5 -.5 0 {.35\height}}{$\m@th#1\rmj$}}
\makeatother

\usepackage{tikz}
\usetikzlibrary{arrows.meta}
\usetikzlibrary{calc,intersections,through}

\definecolor{bluegray}{rgb}{0.4, 0.6, 0.8}

\title{$\Gamma$-Convergence of Higher-Order Phase Transition Models}

\author[1]{Denis Brazke}
\author[2]{Gianna G\"{o}tzmann}
\author[3]{Hans Kn\"{u}pfer}
\affil[1]{{\small Okinawa Institute of Science and Technology, Analysis and Partial Differential Equations Unit, 1919-1 Tancha Onna-son, 904-0495 Japan. {denis.brazke@oist.jp}}}
\affil[2]{{\small University of Augsburg, Institute of Mathematics, Universit\"{a}tsstra\ss e 12a, 86159 Augsburg, Germany. {gianna.goetzmann@uni-a.de}}}
\affil[3]{{\small Heidelberg University, Institute for Mathematics, Im Neuenheimer Feld 205, 69120  Heidelberg, Germany. {knuepfer@uni-heidelberg.de}}}

\date{\today}

\begin{document}

\maketitle

\begin{abstract}
We investigate the asymptotic behavior as $\eps \to 0$ of singularly perturbed phase transition models of order $n \geq 2$, given by
\begin{align}
    G_\eps^{\lam,n}[u] \coloneq
    \int_I \frac 1\eps W(u) -\lam\eps^{2n-3} (u^{(n-1)})^2 + \eps^{2n-1} (u^{(n)})^2\dd x, \quad u \in W^{n,2}(I),
\end{align}
 where $\lam >0$ is fixed, $I \subset \R$ is an open bounded interval, and $W \in C^0(\R)$ is a suitable double-well potential. We find that there exists a positive critical parameter depending on $W$ and $n$, such that the $\Gamma$-limit of $G_\eps^{\lam,n}$ with respect to the $L^1$-topology is given by a sharp interface functional in the subcritical regime. The cornerstone for the corresponding compactness property is a novel nonlinear interpolation inequality involving higher-order derivatives, which is based on Gagliardo-Nirenberg type inequalities.
\end{abstract}

\tableofcontents

\newpage

\section{Introduction}

We investigate the asymptotic behavior of a one-dimensional higher-order Ginzburg-Landau type model, which is given by the family of free-energy functionals
\begin{align} \label{GGn}
    G_\eps^{\lam,n}[u] \coloneq
     \int_I \frac 1\eps W(u) -\lam\eps^{2n-3} (u^{(n-1)})^2 + \eps^{2n-1} (u^{(n)})^2\dd x, \quad u \in W^{n,2}(I), 
\end{align}
where $I \subset \R$ is an open bounded interval, $\lam >0 $ is a constant, $n \in \N_{\geq2}$ is the order of the highest derivative, and $W \in C^0(\R)$ is a suitable double-well potential. Functionals of this form are used to model the behavior of complex materials undergoing phase separation processes (see e.g. \cite{ColMiz}). Using the framework of $\Gamma$-convergence, we derive an asymptotic effective model for the family of functionals given in  \eqref{GGn} as $\eps \to 0$. More precisely, we show that for every order $n$ there exists a critical parameter $\lam_n > 0$ such that the $\Gamma$-limit of $G^{\lam,n}_\eps$ with respect to the $L^1$-topology in the subcritical regime $\lam \in (0,\lam_n)$ is given by a constant multiple of the perimeter functional.

The energy $G_\eps^{\lam,n}$ is a generalization of several known phase separation models. 
For instance, the family of functionals $G_\eps^{0,1}$ represents a classical first-order phase transition model introduced by Cahn and Hilliard \cite{CahHil}. It describes the free energy of a system where phase separation processes between two coexisting isotropic liquids occur. From the theory of thermodynamics it is known that, for many materials, the interface between these phases becomes thinner with decreasing temperature (see e.g. \cite{RowSwi}). This effect motivates the investigation of the functional's asymptotic behavior as the parameter $\eps$, describing the non-dimensionalized thickness of the transition layer, tends to zero. Based on the work of Modica and Mortola \cite{ModMor}, Modica \cite{Mod} and Sternberg \cite{Ster} demonstrated a certain compactness property for sequences with equi-bounded energy and $\Gamma$-convergence of $G_\eps^{0,1}$ as $\eps \to 0$ towards a sharp interface functional, provided that the double-well potential $W$ satisfies a linear coercivity condition. Corresponding results were later shown for $G_\eps^{0,2}$ by Fonseca and Mantegazza \cite{FonMan}, and extended to higher orders by Brusca et al. \cite{BruDonSol} who presented an examination of $G_\eps^{0,n}$ for arbitrary $n \in \N$. Specifically, they derived a suitable general compactness property and showed that the sequence $G_\eps^{0,n}$ $\Gamma$-converges to a sharp interface functional.

Coleman et al. \cite{ColMiz} introduced the second-order model $G_\eps^{\lam,2}$. It describes phase separation processes in nonlinear materials that exhibit periodic layering phenomena, for instance, in concentrated soap solutions or metallic alloys. They argue that if the negative contribution exceeds a certain threshold, the phases become unstable and minimizers of this energy may develop a periodic structure. This was later rigorously confirmed by Mizel et al. \cite{MizPelTro} using variational methods. Therein, they proved that for the standard double-well potential $W(t)=(t-1)^2(t+1)^2$ and sufficiently large $\lam>0$, the minimal energy tends to $-\infty$ with increasing oscillation as $\eps \to 0$. However, Cicalese et al. \cite{CicSpaZep} and Chermisi et al. \cite{CheDalFonLeo} studied the asymptotic behavior of $G_\eps^{\lam,2}$ through $\Gamma$-convergence for small $\lam>0$. They found that there exists a critical constant $\bar \lam > 0$ depending on $W$ such that for $\lam \in (0,\bar \lam)$, the $\Gamma$-limit is again given by a sharp interface functional if $W$ satisfies a quadratic coercivity condition. While Chermisi et al. \cite{CheDalFonLeo} focused on considering the model in higher dimensions, Cicalese et al. \cite{CicSpaZep} dealt with the one-dimensional problem but provided an upper bound for $\bar \lam$ such that no oscillations of the minimizers occur and a lower bound such that oscillations certainly arise. In addition, Hilhorst et al. \cite{HilPelSch} provided a consistent $\Gamma$-convergence result for $\lam<0$. 

In the case $\lam > 0$, the energy functional $G_\eps^{\lam,2}$ incorporates a strictly concave term, making it difficult to find uniform lower bounds. To overcome this issue, Cicalese et al. \cite{CicSpaZep} and Chermisi et al. \cite{CheDalFonLeo} found nonlinear interpolation inequalities, which allowed for a lower bound of the energy in terms of $G_\eps^{0,2}$ and $G_\eps^{0,1}$ respectively, provided the loss of convexity is not too strong (meaning $\lam > 0$ is sufficiently small). Furthermore, nonlinear interpolation inequalities are also used in the theory of nonlocal phase separation models, where control in terms of positive local phase separation models is achieved (see e.g. \cite{FonHayLeoZwi, GinPesZwi}, see also \cite{Sol}).

In this work, we aim to find a matching theory for the higher-order functionals $G_\eps^{\lam,n}$ given by \eqref{GGn}. The starting point of our analysis are the results from \cite{CicSpaZep}, where the related second-order functional was investigated, and impose the same quadratic coercivity condition on the double-well potential (see {\rm{(\hyperref[it-W3]{W3})}}). The latter is needed in order to derive an adapted nonlinear interpolation inequality (see Section~\ref{Subsec:NonLin}) which in turn is used to obtain $L^1$-compactness for sequences with equi-bounded energy (see Section~\ref{Subsec:LowBound}). Finally, we show $\Gamma$-convergence towards a sharp interface functional as $\eps \to 0$ (see Section~\ref{Sec:Gamma}).

This article is organized as follows. In Section \ref{Sec:Prelim}, we present the mathematical setup, preliminaries, and state our main results. In Section \ref{Sec:Compact}, we focus on the consideration and derivation of suitable interpolation inequalities and prove compactness for our family of energy functionals. In Section \ref{Sec:Gamma}, we prove its $\Gamma$-convergence. Some auxiliary interpolation inequalities are moved to the appendix.

\section{Preliminaries and Main Results} \label{Sec:Prelim}

 In the following, we always assume $n \in \N_{\geq2}$ and $\lam \in \R$, $\lam>0$. We restrict ourselves to the one-dimensional case where $I \subset \R$ denotes an open bounded interval of length $|I|$. Given $1 \leq p < \infty$ and $m \in \N$, we denote as usual the Lebesgue spaces by $L^p(I)$ and the Sobolev spaces by $W^{m,p}(I)$ with corresponding norms
    \begin{align}
        \|f\|^p_{L^p(I)} \coloneqq \int_I |f(x)|^p \dd x, && \|f\|_{W^{m,p}(I)} \coloneqq \sum_{j = 0}^m \|f^{(j)}\|_{L^p(I)},
    \end{align}
 where $f^{(j)}$ denotes the $j$-th weak derivative of $f$. We denote by $BV(I,\{\pm1\})$ the space of functions with bounded variation only taking the values $\pm 1$ almost everywhere, i.e. $u \in L^1(I,\{\pm 1\})$ and
    \begin{align}
        V[u] \coloneqq \sup \left\{ \int_I u(x) \, \varphi'(x) \dd x : \varphi \in C_c^\infty(I), \ \|\varphi\|_{L^\infty(I)} \leq 1 \right\} < \infty.
    \end{align}
It is well known that $u \in BV(I,\{\pm 1\})$ has a piecewise constant representative $\bar u$ with finitely many points of discontinuities. We note that $V[u] = 2 \# S(\bar u)$, where $S(\bar u) \subset I$ denotes the set of discontinuity points of $\bar u$. For more details, see \cite{EvaGar}. In the following, we implicitly identify $u$ with $\bar u$. In this article, we use $C$ as a generic constant which might differ from line to line.

We fix the double-well potential $W \colon \R \to \R$ and impose the following assumptions.
    \begin{enumerate}
        \item[(W1)] \label{it-W1} $W$ is continuous and $W \geq 0$.
        \item[(W2)] \label{it-W2} $W(t) = 0$ if and only if $t = \pm 1$.
        \item[(W3)] \label{it-W3} There exists $L > 0$ such that $ W(t) \geq L(t \mp 1)^2$ for all $\pm t > 0$. \hfill (Coercivity)
    \end{enumerate} \newcommand{\hyp}[1]{{\rm{(\hyperref[it-W#1]{W#1})}}}
These assumptions on the double-well potential match the assumptions already imposed e.g. in \cite{CicSpaZep, CheDalFonLeo}. Lastly, given $\eps > 0$, we define $G_\eps^{\lam, n} \colon L^1(I) \to \R \cup \{+\infty\}$ via
    \begin{align}
        G_\eps^{\lam, n}[u] \coloneqq \left\{ \begin{array}{ll}
             \displaystyle \int_I \frac 1\eps W(u) -\lam \eps^{2n-3} (u^{(n-1)})^2 + \eps^{2n-1} (u^{(n)})^2 \dd x    & \quad \text{for all } u \in W^{n,2}(I), \\[10pt]
             +\infty                                                                                                  & \quad \text{for all } u \in L^1(I) \setminus W^{n,2}(I). 
        \end{array} \right.
    \end{align}

Our first main theorem establishes a novel nonlinear interpolation inequality of arbitrary order of differentiation, based on the Gagliardo-Nirenberg inequality \cite{Gag,Nir59} and the techniques developed in \cite[Section~3]{CicSpaZep}.

\begin{theorem}[Nonlinear interpolation for higher orders] \label{Theo:NonLinInt_n}
Let $n \in \N_{\geq2}$ and assume $W$ satisfies \hyp1 -- \hyp3. Then there exists $\lam_n = \lam_n(L)>0$ such that
    \begin{align} \label{NonLinInt_n}
        \lam_n \int_I (u^{(n-1)})^2\dd x \leq \frac 1{|I|^{2n-2}} \int_I W(u)\dd x + |I|^2 \int_I (u^{(n)})^2\dd x
    \end{align}
for every open bounded interval $I \subset \R$, and every $u \in W^{n,2}(I)$.
\end{theorem}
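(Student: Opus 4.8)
The plan is to reduce to a normalized interval by an exact scaling, and then prove the resulting inequality by a compactness/contradiction argument, using the Gagliardo--Nirenberg inequality for the interpolation and \hyp3 for coercivity near the wells.

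First I would rescale. Writing $I = (a,a+|I|)$ and $v(y):= u(a+|I|y)$ on $(0,1)$, one has $\int_I (u^{(k)})^2\dd x = |I|^{1-2k}\int_0^1 (v^{(k)})^2\dd y$ and $\int_I W(u)\dd x = |I|\int_0^1 W(v)\dd y$; multiplying \eqref{NonLinInt_n} by $|I|^{2n-3}$ then shows that every term carries the factor $|I|^{3-2n}$ and that the inequality is \emph{equivalent} to the case $I=(0,1)$ with the \emph{same} constant. So it suffices to produce $\lam_n=\lam_n(L)>0$ with $\lam_n\int_0^1(v^{(n-1)})^2 \le \int_0^1 W(v) + \int_0^1 (v^{(n)})^2$ for all $v\in W^{n,2}(0,1)$. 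Two preliminary estimates on $(0,1)$ are needed: (a) from \hyp3 and continuity of $W$ one gets $W(t)\ge L(|t|-1)^2$ for every $t\in\R$, hence $t^2\le 2+\tfrac2L W(t)$ and $\|v\|_{L^2(0,1)}^2\le 2+\tfrac2L\int_0^1 W(v)$; and (b) the Gagliardo--Nirenberg inequality on the bounded interval $(0,1)$, which after Young's inequality yields $C_1=C_1(n)$ with $\|w^{(n-1)}\|_{L^2(0,1)}^2\le C_1\bigl(\|w^{(n)}\|_{L^2(0,1)}^2+\|w\|_{L^2(0,1)}^2\bigr)$ for all $w\in W^{n,2}(0,1)$ (this is the auxiliary interpolation inequality deferred to the appendix).

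Next I would argue by contradiction: if no $\lam_n>0$ works, then after scaling there are $v_k\in W^{n,2}(0,1)$ with $\int_0^1(v_k^{(n-1)})^2>0$ and $Q_k:=\frac{\int W(v_k)+\int(v_k^{(n)})^2}{\int (v_k^{(n-1)})^2}\to 0$. Combining (a) and (b) gives $\int (v_k^{(n-1)})^2\le C_1\int(v_k^{(n)})^2+\tfrac{2C_1}{L}\int W(v_k)+2C_1$, so if $\int(v_k^{(n-1)})^2$ were unbounded along a subsequence then $Q_k\gtrsim 1$, a contradiction; hence $\int(v_k^{(n-1)})^2$ is bounded, the numerator tends to $0$, and by (a),(b) the sequence $(v_k)$ is bounded in $W^{n,2}(0,1)$. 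Passing to a subsequence, $v_k\rightharpoonup v$ in $W^{n,2}(0,1)$ and $v_k\to v$ strongly in $W^{n-1,2}(0,1)$, hence uniformly on $[0,1]$. Lower semicontinuity forces $\int_0^1(v^{(n)})^2=0$, so $v$ is a polynomial of degree $\le n-1$; uniform convergence and continuity of $W$ force $\int_0^1 W(v)=0$, so $v(x)\in\{-1,+1\}$ for a.e.\ $x$. A polynomial of degree $\le n-1$ taking only the values $\pm1$ on a set of full measure must be constant, so $v\equiv c$ with $c\in\{-1,+1\}$ and $v_k\to c$ uniformly. Then for large $k$, $\|v_k-c\|_{L^\infty(0,1)}<1$, so \hyp3 gives $W(v_k)\ge L(v_k-c)^2$, i.e.\ $\int W(v_k)\ge L\|v_k-c\|_{L^2}^2$; applying (b) to $w=v_k-c$ (so $w^{(n-1)}=v_k^{(n-1)}$ and $w^{(n)}=v_k^{(n)}$) yields $\int(v_k^{(n-1)})^2\le C_1(1+\tfrac1L)\bigl(\int(v_k^{(n)})^2+\int W(v_k)\bigr)$, i.e.\ $Q_k\ge\bigl(C_1(1+\tfrac1L)\bigr)^{-1}>0$, contradicting $Q_k\to0$. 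Therefore $\lam_n:=\inf_v Q(v)>0$, and undoing the scaling gives \eqref{NonLinInt_n} for arbitrary $I$.

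The hard part is the last portion of the contradiction argument. Naively combining Gagliardo--Nirenberg with the coercivity bound leaves an unabsorbable additive constant (the $2C_1$ above), precisely because $\int W(v)+\int(v^{(n)})^2$ can be arbitrarily small while $\|v\|_{L^2(0,1)}^2$ is of order one --- this is exactly the regime where $v$ hugs a constant well value $\pm1$. The resolution is the compactness step, which identifies the limit as that constant $c$, together with the observation that applying the interpolation inequality to $v_k-c$ rather than to $v_k$ converts $\|v_k\|_{L^2}^2$ into $\|v_k-c\|_{L^2}^2\lesssim\int W(v_k)$ and removes the constant. A secondary but essential bookkeeping point is that the scaling in the first step is exact, so the resulting $\lam_n$ depends only on $n$ and $L$ and not on $I$.
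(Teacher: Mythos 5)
Your proof is correct, but it takes a genuinely different route from the paper. The paper argues directly and constructively: after translating to $I=(0,l)$ it sets $m=\fint_I u^{(n-1)}$, shows via the Mean Value Theorem and the Fundamental Theorem of Calculus that one only needs to control $l\,m^2$, reduces to the regime $m^2>4l\int(u^{(n)})^2$, deduces the pointwise bound $|u^{(n-1)}|>\tfrac12|m|$ on $I$, applies Rolle's theorem inductively to conclude $u$ has at most $n-1$ zeros, selects a root-free subinterval $(\alpha,\beta)$ of length at least $l/n$, and there applies the linear interpolation inequality (with the adjustable scale parameter $\sigma=l/n$, which is why Lemma~\ref{Lemma:NirIneq_n} is stated with a general $\sigma\le|I|$) to $u\mp1$ together with \hyp3. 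This produces an explicitly traceable $\lambda_n$. You instead observe that the inequality is scaling-exact and therefore equivalent to its statement on $(0,1)$, and then run a compactness--contradiction argument: boundedness in $W^{n,2}(0,1)$ from (a)+(b), weak convergence and $\int(v^{(n)})^2=0$ and $\int W(v)=0$ in the limit, identification of the limit as a constant $c\in\{\pm1\}$ since a polynomial of degree $\le n-1$ valued in $\{\pm1\}$ must be constant, and finally the crucial re-centering step of applying the interpolation to $v_k-c$ so that \hyp3 absorbs $\|v_k-c\|_{L^2}^2$. Your argument is conceptually cleaner and sidesteps the Rolle/root-counting combinatorics entirely, but it is non-constructive: it establishes $\inf Q>0$ without producing an explicit value or lower bound for $\lambda_n$. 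The paper's approach, by contrast, gives an explicit (if crude) constant of order $n^{-(2n-2)}\min\{L,n^{-2n}\}$ up to the constant in Lemma~\ref{Lemma:NirIneq_n}, and its root-free-subinterval idea is essentially what your compactness step localizes in the limit when it identifies $v\equiv c$. Both resolve the same core difficulty --- the additive constant that a naive Gagliardo--Nirenberg bound leaves unabsorbed when $v$ hugs a well --- the paper by finding a short interval where $u$ never changes sign, and you by recognizing in the limit which well is being hugged and recentering there.
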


The proof is given at the end of Section \ref{Subsec:NonLin}. The main difficulties lie in the incorporation of the nonlinear double-well potential $W$, and in obtaining suitable scaling factors. Replacing the $L^2$-norm by the nonlinear term in the interpolation requires the use of the coercivity condition \hyp3 on intervals where $u$ is either strictly positive or strictly negative and therefore an examination of the roots of $u$.

\begin{rem}
    We note that a subquadratic coercivity condition, e.g. linear coercivity, instead of \hyp3 would not suffice to show \eqref{NonLinInt_n}. This is because the double-well potential does not provide sufficient control over fine-scale oscillations in the subquadratic case. A detailed elaboration of this reasoning for the case $n=2$ can be found in \cite[Section 3.1]{CicSpaZep}.
\end{rem}

We fix $\lam_n > 0$ to be the optimal constant established in Theorem \ref{Theo:NonLinInt_n}. We call $\lam$ subcritical if $0 < \lam < \lam_n$ and supercritical if $\lam > \lam_n$. Theorem \ref{Theo:NonLinInt_n} allows us to control the concave term in $G_\eps^{\lam,n}$ by the remaining non-negative terms in the subcritical regime. In Section~\ref{Subsec:LowBound}, we use this control to estimate $G^{\lam,n}_\eps$ from below by $G^{0,n}_\eps$ up to a constant. Brusca et al. \cite{BruDonSol} already found that sequences with equi-bounded energy $G^{0,n}_\eps$ are precompact with respect to convergence in measure. Thus, we obtain compactness in $L^1$ of the energy functional $G_\eps^{\lam,n}$ by combining the result from \cite{BruDonSol} and Vitali's Convergence Theorem. We note that in the following $\eps \to 0$ means that the statement holds for every sequence $\eps_m \to 0$ as $m \to \infty$.

\begin{theorem}[Compactness] \label{Theo:comp_n}
Let $n \in \N_{\geq2}$, $\lam \in (0,\lam_n)$ and assume $W$ satisfies \hyp1 -- \hyp3. Let $I \subset \R$ be an open bounded interval. Let $u_\eps \in W^{n,2}(I)$ such that $\limsup_{\eps\to 0} G_\eps^{\lam,n}[u_\eps] < \infty$. Then there exists $u \in BV(I,\{\pm1\})$ and a subsequence (not relabeled) such that $u_\eps \to u$ in $L^1(I)$ as $\eps \to 0$.
\end{theorem}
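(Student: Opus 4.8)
The plan is to reduce the compactness of $G_\eps^{\lam,n}$ to the known compactness result for the non-negative functional $G_\eps^{0,n}$ from \cite{BruDonSol}, using Theorem~\ref{Theo:NonLinInt_n} to absorb the concave term. First I would apply the nonlinear interpolation inequality \eqref{NonLinInt_n} on the whole interval $I$ to the rescaled situation. More precisely, since $\lam < \lam_n$, I can write $\lam = (1-\theta)\lam_n$ for some $\theta \in (0,1)$; applying \eqref{NonLinInt_n} to $u_\eps$ on $I$ and multiplying by suitable powers of $\eps$ yields a bound of the form
\begin{align}
    \lam_n \eps^{2n-3}\int_I (u_\eps^{(n-1)})^2 \dd x \leq \frac{\eps^{2n-3}}{|I|^{2n-2}} \int_I W(u_\eps) \dd x + |I|^2 \eps^{2n-3}\int_I (u_\eps^{(n)})^2 \dd x.
\end{align}
The $\eps$-powers on the two sides of this estimate do not match those appearing in $G_\eps^{0,n}$, so the naive global application is too lossy. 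The standard fix (as in \cite{CicSpaZep}) is to apply the interpolation inequality \emph{locally}, on subintervals of size comparable to $\eps$: partitioning $I$ into intervals $I_k$ of length $\sim \eps$ and applying \eqref{NonLinInt_n} on each $I_k$ makes the scaling factors $|I_k|^{-(2n-2)}$ and $|I_k|^2$ turn into the correct powers $\eps^{-(2n-2)}$ and $\eps^2$, so that summing over $k$ gives
\begin{align}
    \lam_n \eps^{2n-3}\int_I (u_\eps^{(n-1)})^2 \dd x \leq C\Big(\frac{1}{\eps}\int_I W(u_\eps)\dd x + \eps^{2n-1}\int_I (u_\eps^{(n)})^2\dd x\Big),
\end{align}
at least up to boundary corrections. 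Since this may cost a factor $C>1$ rather than exactly $\lam_n$, one needs $C\lam < \lam_n$, which is why the subcritical threshold in this theorem must in principle be taken slightly smaller than the optimal $\lam_n$ of Theorem~\ref{Theo:NonLinInt_n}; alternatively one exploits that only a fraction $\theta$ of the coercive term is needed to absorb $\lam$, leaving a genuinely positive multiple of $G_\eps^{0,n}[u_\eps]$ on the right.

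With such a local interpolation estimate in hand, the second step is the absorption: rearranging gives a constant $c>0$ (depending on $\lam, \lam_n, n$, and mildly on $|I|$) with
\begin{align}
    c\, G_\eps^{0,n}[u_\eps] \leq G_\eps^{\lam,n}[u_\eps] + C_\eps,
\end{align}
where $C_\eps$ collects lower-order boundary terms that vanish (or stay bounded) as $\eps \to 0$. Combined with the hypothesis $\limsup_{\eps\to 0} G_\eps^{\lam,n}[u_\eps] < \infty$, this yields $\limsup_{\eps\to 0} G_\eps^{0,n}[u_\eps] < \infty$. In particular $\frac1\eps\int_I W(u_\eps)\dd x$ stays bounded, so $\int_I W(u_\eps)\dd x \to 0$; by \hyp1, \hyp2 and lower semicontinuity this already forces any $L^1$-limit to take values in $\{\pm1\}$ a.e.

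The third step invokes \cite{BruDonSol}: equi-boundedness of $G_\eps^{0,n}$ implies precompactness of $(u_\eps)$ with respect to convergence in measure, with every limit point in $BV(I,\{\pm1\})$. To upgrade convergence in measure to $L^1$-convergence I would use Vitali's convergence theorem: the sequence $(u_\eps)$ is uniformly integrable because the energy bound, via the coercivity \hyp3, controls $\int_I (|u_\eps|-1)_+^2 \dd x \lesssim \int_I W(u_\eps)\dd x \to 0$ on the region where $|u_\eps|$ is large, so the contribution of $\{|u_\eps|>M\}$ to $\int|u_\eps|$ is uniformly small; together with the finite measure of $I$ this gives equi-integrability, hence (along the subsequence) $u_\eps \to u$ in $L^1(I)$ with $u \in BV(I,\{\pm1\})$.

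The main obstacle is the first step: getting the scaling factors right in the local application of \eqref{NonLinInt_n}. One must partition $I$ into $\sim |I|/\eps$ intervals of length exactly (or comparably) $\eps$, handle the leftover boundary interval of length $<\eps$ separately, and check that the constant produced after summation is still compatible with the strict inequality $\lam < \lam_n$ — i.e. that the loss incurred by localization and by discarding cross terms does not eat up the whole gap $\lam_n - \lam$. A clean way around the constant bookkeeping is to split $G_\eps^{\lam,n}[u_\eps] = \frac{\lam}{\lam_n}\big(\text{interpolation RHS at scale }\eps\big) + \big(\text{remainder}\big)$ and observe that the remainder is $(1-\lam/\lam_n)$ times a non-negative functional controlling $\frac1\eps\int W(u_\eps)$ and $\eps^{2n-1}\int(u_\eps^{(n)})^2$; this makes the absorption transparent and avoids needing a strictly smaller threshold, provided the localization is performed exactly rather than up to constants. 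Everything else — the implication $\int W(u_\eps)\to 0 \Rightarrow$ limit in $\{\pm1\}$, the citation of \cite{BruDonSol}, and the Vitali argument — is routine.
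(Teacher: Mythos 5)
Your proposal is correct and follows essentially the same route as the paper: localize the nonlinear interpolation inequality at scale $\eps$ to obtain a lower bound $c\,G_\eps^{0,n}[u_\eps]\leq G_\eps^{\lam,n}[u_\eps]$, invoke the compactness in measure from \cite{BruDonSol}, and upgrade to $L^1$-convergence via Vitali and the coercivity \hyp3. The only place you hedge --- boundary intervals and possible constant losses eating the gap $\lam_n-\lam$ --- is exactly what the paper's Lemma~\ref{abschCic_n} disposes of cleanly: after rescaling $v(x)=u(\eps x)$ it partitions $I/\eps$ into $m_\eps=\lceil|I|/\eps\rceil$ \emph{equal-length} subintervals of length $l_\eps=|I|/(\eps m_\eps)\le 1$ (so no leftover piece), applies Theorem~\ref{Theo:NonLinInt_n} at scale $l_\eps$ on each, and uses $l_\eps\to1$ to conclude $\big(1-\tfrac{\lam}{\lam_n}-\delta\big)G_\eps^{0,n}\leq G_\eps^{\lam,n}$ for small $\eps$ with no error term and no shrinking of the threshold $\lam_n$.
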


Theorem \ref{Theo:comp_n} is proved at the end of Section \ref{Sec:Compact}. Our last main theorem deals with the asymptotic behavior of the family of energies $G_\eps^{\lam,n}$ as $\eps \to 0$. For this purpose, we introduce the sharp interface functional
    \begin{align}  \label{Gammalimes}
        G^{\lam,n}[u] \coloneqq \left\{\begin{array}{ll}
            C_W^{\lam,n} \#S(u),    & \quad \text{for all } u \in BV(I,\{\pm1\}), \\
            +\infty,                & \quad \text{for all } u \in L^1(I) \setminus BV(I,\{\pm1\}),
        \end{array}\right.
    \end{align}
where $C_W^{\lam,n}$ is given by the optimal profile problem
    \begin{align}
        C_W^{\lam,n}    & \coloneqq \inf\left\{\int_\R W(f) - \lam(f^{(n-1)})^2 + (f^{(n)})^2\dd x : f \in \AAA^n(\R) \right\}, \\
        \AAA^n(\R)      & \coloneqq \left\{ f \in W^{n,2}_{\mathrm{loc}}(\R) \colon f(x) = 1 \text{ for } x > T \text{ and } f(x) = -1 \text{ for } x < -T  \text{ for some } T>0 \right\}.
    \end{align}
While the explicit value of $C_W^{\lam,n}$ is not known to us, we note that $C_W^{\lam,n} > 0$ for $\lam \in(0,\lam_n)$, see Proposition \ref{constwohl}. We have the following $\Gamma$-convergence result.

\begin{theorem}[$\Gamma$-convergence] \label{Gamma_conv_n}
Let $n \in \N_{\geq 2}$, $\lam \in (0,\lam_n)$ and assume $W$ satisfies \hyp1 -- \hyp3. Let $I \subset \R$ be an open bounded interval. Then $G_\eps^{\lam, n} \stackrel{\Gamma}{\longrightarrow} G^{\lam, n}$ as $\eps \to 0$ in the $L^1$-topology, that is 
    \begin{itemize}
        \item for every $u \in L^1(I)$ and every $u_\eps \in L^1(I)$ such that $u_\eps \to u$, we have
            \begin{align} \label{defliminf} 
                G^{\lam, n}[u] \leq \liminf_{\eps \to 0} G_\eps^{\lam, n}[u_\eps].  \tag{\textit{Liminf inequality}}
            \end{align}%
        \item for every $u \in L^1(I)$ there exists $u_\eps \in L^1(I)$ such that $u_\eps \to u$ and 
            \begin{align} \label{deflimsup}
                \limsup_{\eps \to 0} G_\eps^{\lam, n}[u_\eps] \leq G^{\lam, n}[u].  \tag{\textit{Limsup inequality}}
            \end{align}
    \end{itemize}
\end{theorem}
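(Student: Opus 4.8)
The plan is to prove the two one-sided bounds \eqref{defliminf} and \eqref{deflimsup} separately, the \eqref{deflimsup} inequality being essentially a scaling computation and the \eqref{defliminf} inequality resting on Theorems~\ref{Theo:NonLinInt_n} and~\ref{Theo:comp_n}. Throughout I write $\int_J[\,\cdot\,]\dd x$ for $\int_J \tfrac1\eps W(v)-\lam\eps^{2n-3}(v^{(n-1)})^2+\eps^{2n-1}(v^{(n)})^2\dd x$, the competitor $v$ being clear from context.

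For \eqref{deflimsup} I would first dispose of the case $u\notin BV(I,\{\pm1\})$, where $G^{\lam,n}[u]=+\infty$ and any sequence works. Otherwise let $S(u)=\{x_1,\dots,x_N\}$, fix $\eta>0$, and — since $C_W^{\lam,n}$ is an infimum — pick $f\in\AAA^n(\R)$ with $f\equiv\pm1$ outside $[-T,T]$ and $\int_\R W(f)-\lam(f^{(n-1)})^2+(f^{(n)})^2\dd x\le C_W^{\lam,n}+\eta$; by the one-dimensional embedding $W^{n,2}_{\mathrm{loc}}\hookrightarrow C^{n-1}$, $f$ matches the constants $\pm1$ together with its first $n-1$ derivatives at $\pm T$. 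Define $u_\eps:=u$ outside $\bigcup_i(x_i-T\eps,x_i+T\eps)$ and, on each of these intervals (disjoint and interior for small $\eps$), the translated copy $x\mapsto f(\pm(x-x_i)/\eps)$, with the sign dictated by the orientation of the jump of $u$ at $x_i$; the $C^{n-1}$-matching gives $u_\eps\in W^{n,2}(I)$. The substitution $y=(x-x_i)/\eps$, together with $u_\eps^{(k)}=\eps^{-k}f^{(k)}(\cdot/\eps)$ and the fact that the exponents $2n-3$ and $2n-1$ are exactly those rendering every term of $[\,\cdot\,]\dd x$ invariant under this rescaling (reflection $y\mapsto-y$ leaving the energy unchanged), shows that each interval contributes precisely $\int_\R W(f)-\lam(f^{(n-1)})^2+(f^{(n)})^2\dd x$. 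Hence $G_\eps^{\lam,n}[u_\eps]=N(C_W^{\lam,n}+\eta)$ for small $\eps$ while $\|u_\eps-u\|_{L^1(I)}\le 4NT\eps\to0$, and a diagonal argument as $\eta\downarrow0$ yields the recovery sequence.

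For \eqref{defliminf}, let $u_\eps\to u$ in $L^1(I)$; I may assume the right-hand side of \eqref{defliminf} is finite and, along a subsequence realizing it, that $\sup_\eps G_\eps^{\lam,n}[u_\eps]<\infty$, so that $u\in BV(I,\{\pm1\})$ with $S(u)=\{x_1<\dots<x_N\}$ by Theorem~\ref{Theo:comp_n}. The crucial tool is the \emph{localized} form of Theorem~\ref{Theo:NonLinInt_n}: tiling an interval $J$ of length a multiple of $\eps$ into length-$\eps$ pieces, applying \eqref{NonLinInt_n} on each (so its weights become $\eps^{-(2n-2)}$ and $\eps^2$), and summing, gives
\begin{align}
\lam_n\,\eps^{2n-3}\int_J(u_\eps^{(n-1)})^2\dd x \ &\le\ \frac1\eps\int_J W(u_\eps)\dd x + \eps^{2n-1}\int_J(u_\eps^{(n)})^2\dd x ;
\end{align}
since $\lam<\lam_n$ this makes $\int_J[\,\cdot\,]\dd x\ge(1-\lam/\lam_n)\bigl(\tfrac1\eps\int_J W(u_\eps)\dd x+\eps^{2n-1}\int_J(u_\eps^{(n)})^2\dd x\bigr)\ge0$. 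Now pick zeros $c_\eps$ of $u_\eps$ with $c_\eps\to x_i$ (possible since $u_\eps$ crosses $0$ between points $p_\eps\to x_i^-$, $q_\eps\to x_i^+$ where $u_\eps\to\mp1$), and set $I_i:=(c_\eps-\delta_\eps,c_\eps+\delta_\eps)$ with $\delta_\eps\in\eps\Z$, $\delta_\eps\to\delta$ for a small fixed $\delta$; after a harmless $o(1)$-enlargement of the $I_i$ all relevant tilings are exact, so the energy off $\bigcup_iI_i$ is $\ge-o(1)$, the quantities $\tfrac1\eps\int_{I_i}W(u_\eps)\dd x$, $\eps^{2n-3}\int_{I_i}(u_\eps^{(n-1)})^2\dd x$, $\eps^{2n-1}\int_{I_i}(u_\eps^{(n)})^2\dd x$ are $\le C$, and by superadditivity of $\liminf$ it suffices to show $\liminf_\eps\int_{I_i}[\,\cdot\,]\dd x\ge C_W^{\lam,n}$ for each $i$. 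Fix $i$ and blow up: $f_\eps(y):=u_\eps(c_\eps+\eps y)$, so that $\int_{I_i}[\,\cdot\,]\dd x$ equals the $\eps$-free profile energy $\int_{-\delta_\eps/\eps}^{\delta_\eps/\eps}W(f_\eps)-\lam(f_\eps^{(n-1)})^2+(f_\eps^{(n)})^2\dd y$. The bounds above give $\int_\R W(f_\eps),\int_\R(f_\eps^{(n-1)})^2,\int_\R(f_\eps^{(n)})^2\le C$ and, via \hyp3 and the Gagliardo--Nirenberg inequalities of the appendix, $\|f_\eps\|_{W^{n,2}(-R,R)}\le C(R)$; hence along a further subsequence $f_\eps\rightharpoonup f_0$ in $W^{n,2}_{\mathrm{loc}}(\R)$ and $f_\eps\to f_0$ in $C^{n-1}_{\mathrm{loc}}(\R)$. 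From $\int_\R W(f_0),\int_\R(f_0^{(n-1)})^2,\int_\R(f_0^{(n)})^2<\infty$ one deduces that $f_0$ and its first $n-1$ derivatives have limits at $\pm\infty$, with $f_0\to a_\pm\in\{-1,+1\}$; using $f_0(0)=0$ together with $u_\eps\to u$ and the fact that $x_i$ is a jump point of $u$, one checks $a_-\ne a_+$, so $f_0$ (up to the reflection $y\mapsto-y$) is an admissible profile — and enlarging $\AAA^n(\R)$ to profiles merely converging to $\mp1$ leaves $C_W^{\lam,n}$ unchanged (cut off the tails and glue to $\pm1$ on unit intervals at cost $o(1)$). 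Finally, for each $R$ the strong $C^{n-1}$-convergence passes the $W$-term and the concave $(f^{(n-1)})^2$-term to the limit while weak $L^2$-convergence handles $(f^{(n)})^2$, and the localized inequality (tiling in $y$ by unit intervals) shows the energy of $f_\eps$ on $\{|y|>R\}$ is $\ge0$; letting $R\to\infty$,
\begin{align}
C_W^{\lam,n} \ &\le\ \int_\R W(f_0)-\lam(f_0^{(n-1)})^2+(f_0^{(n)})^2\dd y \ \le\ \liminf_{\eps\to0}\int_{I_i}[\,\cdot\,]\dd x ,
\end{align}
and summing over $i$ completes the proof.

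The \eqref{deflimsup} direction is routine. In the \eqref{defliminf} direction the anticipated obstacles are: (i) passing from Theorem~\ref{Theo:NonLinInt_n} to its $\eps$-tiled form, which is exactly what makes $\int_J[\,\cdot\,]\dd x$ non-negative away from the transition points despite the concave term $-\lam\eps^{2n-3}(u^{(n-1)})^2$; (ii) the lower-semicontinuity along the blow-up sequence, since the concave term is \emph{not} weakly lower semicontinuous and must be handled through the strong $C^{n-1}_{\mathrm{loc}}$-convergence, itself a consequence of the uniform $W^{n,2}_{\mathrm{loc}}$-bound coming from (i); and (iii) identifying the blow-up limit $f_0$ as a genuine $-1\!\to\!+1$ transition, which is where the choice of blow-up center and the hypothesis that $x_i\in S(u)$ enter.
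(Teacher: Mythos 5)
Your \eqref{deflimsup} argument matches the paper's Proposition~\ref{prop:limsup_inequality} essentially line for line. For \eqref{defliminf} you take a genuinely different route: the paper builds an explicit competitor in $\AAA^n(\R)$ by inserting the $u_\eps$-window $(x_{\sigma,i}^-,x_{\sigma,i}^+)$ between polynomial couplings to $\mp 1$ (Lemma~\ref{poly}), whereas you blow up to $f_\eps(y)=u_\eps(c_\eps+\eps y)$, extract a $W^{n,2}_{\mathrm{loc}}$-weak / $C^{n-1}_{\mathrm{loc}}$-strong limit $f_0$, and lower-semicontinue. That route is in principle viable, and your treatment of the three technical obstacles you flag — the $\eps$-tiled interpolation making the off-interface energy non-negative, lower semicontinuity of the concave term via strong $C^{n-1}_{\mathrm{loc}}$ convergence, and the $R\to\infty$ cut-off using unit-interval tiling of Corollary~\ref{corNonLinInt_R} — is sound.

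The gap is in the sentence ``using $f_0(0)=0$ together with $u_\eps\to u$ and the fact that $x_i$ is a jump point of $u$, one checks $a_-\ne a_+$.'' This does not follow from what you have. The points $p_\eps$, $q_\eps$ where $u_\eps\to\mp1$ may sit at distances $\gg\eps$ from $c_\eps$, so after rescaling by $\eps$ they escape to $\mp\infty$ and impose no constraint on $f_0$. If $u_\eps$ oscillates on the transition layer — which nothing in the energy bound a priori rules out — then a zero $c_\eps$ chosen between $p_\eps$ and $q_\eps$ may be one at which $u_\eps$ crosses $0$ and immediately returns to the same well, giving $a_-=a_+$ and $\int_\R W(f_0)-\lam(f_0^{(n-1)})^2+(f_0^{(n)})^2 < C_W^{\lam,n}$. $L^1$-convergence $u_\eps\to u$ constrains behavior at scale $O(1)$, not at scale $\eps$, so it cannot single out the ``true'' transition among possibly many zeros. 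To repair this you would either have to show that at least one such blow-up limit is a genuine $-1\to+1$ profile and that the others contribute non-negatively (a structure-of-blow-up argument), or do what the paper does: pick endpoints $x^\pm_{\sigma,i}$ where $u_\eps$ and all rescaled derivatives $\eps^k u_\eps^{(k)}$, $k\le n-1$, are simultaneously small (supplied by the Gagliardo--Nirenberg estimate and a pointwise-a.e.\ selection), and compare the energy on $(x^-_{\sigma,i},x^+_{\sigma,i})$ directly with $C_W^{\lam,n}$ via the polynomial couplings of Lemma~\ref{poly}, sidestepping the need to identify a blow-up limit at all. As written, the key inequality $\liminf_\eps\int_{I_i}[\,\cdot\,]\dd x\ge C_W^{\lam,n}$ is not established.
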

\noeqref{deflimsup} \noeqref{defliminf}

The Theorem follows from the combination of Proposition~\ref{prop:liminf_inequality} and Proposition~\ref{prop:limsup_inequality}, which are proved in Section \ref{Sec:Gamma}. It reveals that, in the subcritical regime, the diffuse interface energy $G_\eps^{\lam, n}$ converges to the sharp interface model $G^{\lam, n}$.

The Limsup inequality is obtained by a well established approach from the theory of singular perturbation models (see e.g. \cite{CicSpaZep,CheDalFonLeo,BruDonSol}). The primary challenge is the proof of the Liminf inequality. In order to obtain the desired estimate, for every sequence $u_\eps \in W^{n,2}(I)$ under consideration, we construct an energetically favored competitor sequence (see Section~\ref{Subsec:FavComp}). It is necessary to make further considerations due to the higher-order derivatives, in particular we must ensure that the favored competitor belongs to $W^{n,2}(I)$. This is achieved by a suitable coupling of the two phases, whose energy is controlled using a suitable version of the Gagliardo-Nirenberg inequality (see Theorem~\ref{GagNir}).

\begin{rem}
As usual in phase transition models, one can impose a mass constraint by considering the energy functional by changing the class of admissible functions from $W^{n,2}(I)$ to
    \begin{align}
        \tilde{\AAA^n}(I) \coloneqq \left\{u \in W^{n,2}(I) : \int_I u(x) \dd x = m \right\}
    \end{align}
for some fixed $m \in (-|I|,|I|)$. However, the changes in the analysis are minor, and therefore we refrain from imposing it.
\end{rem}

\begin{rem}
It might be worthwhile to rigorously generalize the theory to higher dimensions by considering open and bounded domains $\Omega \subset \R^d$ instead of intervals $I \subset \R$, which is usually done by developing a higher-dimensional variant of the nonlinear interpolation inequality, using Fubini's Theorem and blow-up arguments (see \cite{CheDalFonLeo}, see also \cite{brai}). In addition, as in the case $n = 2$, the explicit dependence of the critical parameter $\lam_n$ on $W$ and $n$ is unknown to us.
\end{rem}

\section{Compactness} \label{Sec:Compact}

In this section, we prove the compactness property given by Theorem~\ref{Theo:comp_n}, which is needed to show our $\Gamma$-convergence result. To this end, in Section~\ref{Subsec:NonLin}, we establish a novel Gagliardo-Nirenberg type inequality that contains the nonlinear double-well potential term. In Section~\ref{Subsec:LowBound}, using this interpolation inequality, we find a lower bound for $G_\eps^{\lam,n}$ up to a constant, namely the energy $G_\eps^{0,n}$ for which a compactness result has already been shown by Brusca et al. \cite{BruDonSol}. In the last step, we transfer the compactness statement into the $L^1$-topology using Vitali's Convergence theorem.

\subsection{Nonlinear interpolation} \label{Subsec:NonLin}

In order to obtain a lower bound for $G_\eps^{\lam,n}$, we exploit the structure of the energy functional and estimate the second highest-order term by the highest-order term and the potential term. This recalls the Gagliardo-Nirenberg inequality from Theorem~\ref{GagNir}. In order to adapt it  to the present setting, we first consider another auxiliary interpolation, which is a variant of Lemma~\ref{IntLem} for the special case $p=q=r=2$ and can also be found in \cite[Lemma 3.3]{CheDalFonLeo}. The main difference lies in the more general scaling factors, which are relevant to the proof of the nonlinear interpolation in Theorem~\ref{NonLinInt_n}.

\begin{lemma} \label{Lemma:NirIneq_n}
Let $n \in \N_{\geq2}$. There exists $c = c(n) \in (0,1)$ such that
    \begin{align} \label{eq:NirIneq_n}
        c \int_I (u^{(n-1)})^2\dd x \leq \frac 1{\sigma^{2n-2}} \int_I u^2\dd x + \sigma^2 \int_I (u^{(n)})^2\dd x
    \end{align}
for every open bounded interval $I \subset \R$, every $0 < \sigma \leq |I|$ and every $u \in W^{n,2}(I)$.
\end{lemma}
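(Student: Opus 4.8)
The plan is to reduce everything to the scale-invariant case and then invoke the standard Gagliardo--Nirenberg interpolation inequality on an interval. First, I would record the relevant Gagliardo--Nirenberg estimate: for a bounded interval $J$ and $w \in W^{n,2}(J)$ one has
\begin{align}
    \|w^{(n-1)}\|_{L^2(J)}^2 \leq C(n)\left( \|w^{(n)}\|_{L^2(J)}^{2\theta}\|w\|_{L^2(J)}^{2(1-\theta)} + |J|^{-2(n-1)}\|w\|_{L^2(J)}^2 \right),
\end{align}
with $\theta = (n-1)/n$; this is exactly the $p=q=r=2$ instance of the general interpolation result (Lemma~\ref{IntLem}) or Lemma 3.3 in \cite{CheDalFonLeo}. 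The only mildly delicate point here is keeping track of the $|J|$-dependence in the lower-order (non-homogeneous) term, which is forced by scaling and is where the ``more general scaling factors'' enter.

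Next I would rescale. Fix $I$ and $0<\sigma\le|I|$, and set $J \coloneqq \sigma^{-1} I$, so $|J| = \sigma^{-1}|I| \geq 1$, and define $v(y) \coloneqq u(\sigma y)$ for $y \in J$. Then $v^{(k)}(y) = \sigma^k u^{(k)}(\sigma y)$, and changing variables gives $\|v^{(k)}\|_{L^2(J)}^2 = \sigma^{2k-1}\|u^{(k)}\|_{L^2(I)}^2$. Applying the displayed inequality to $v$ on $J$, using $|J|\ge 1$ to bound $|J|^{-2(n-1)}\le 1$, and then using Young's inequality $a^{\theta}b^{1-\theta}\le \theta a + (1-\theta)b \le a+b$ on the product term, I get
\begin{align}
    \|v^{(n-1)}\|_{L^2(J)}^2 \leq C(n)\left( \|v^{(n)}\|_{L^2(J)}^2 + \|v\|_{L^2(J)}^2 \right).
\end{align}
Translating back through the scaling identities, the left side is $\sigma^{2n-3}\|u^{(n-1)}\|_{L^2(I)}^2$, and the right side is $C(n)\big(\sigma^{2n-1}\|u^{(n)}\|_{L^2(I)}^2 + \sigma^{-1}\|u\|_{L^2(I)}^2\big)$. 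Dividing by $\sigma^{2n-3}$ yields
\begin{align}
    \|u^{(n-1)}\|_{L^2(I)}^2 \leq C(n)\left( \sigma^2 \int_I (u^{(n)})^2\dd x + \frac{1}{\sigma^{2n-2}}\int_I u^2 \dd x \right),
\end{align}
and setting $c \coloneqq \min\{1, C(n)^{-1}\} \in (0,1)$ gives \eqref{eq:NirIneq_n}.

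The main obstacle is really just establishing the base Gagliardo--Nirenberg inequality with the correct handling of the additive lower-order term on a general interval (not just on $\R$ or on a fixed reference interval), since the homogeneous GN inequality fails on bounded domains and one must add the $\|w\|_{L^2}$ term with the scaling-correct weight; but this is precisely what Lemma~\ref{IntLem} / \cite[Lemma 3.3]{CheDalFonLeo} supplies, so after citing it the argument is a clean scaling computation. One should also double-check the harmless edge cases (e.g. $u\equiv 0$, or $u$ constant, where $u^{(n-1)}=0$) and note that $\sigma\le|I|$ is exactly what makes $|J|\ge 1$ so that the $|J|^{-2(n-1)}$ factor can be dropped without cost.
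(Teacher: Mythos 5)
Your overall strategy is sound and the scaling bookkeeping is done correctly, but it takes a genuinely different route from the paper, and you have mis-attributed the key ingredient. Let me flag both.

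\smallskip

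\emph{Difference from the paper.} The paper does \emph{not} invoke a higher-order Gagliardo--Nirenberg inequality. It proceeds by induction on $n$: the base case $n=2$ is obtained from Lemma~\ref{IntLem} (which is only the $j=1$, $m=2$ statement) combined with a subdivision of $I$ into subintervals of comparable length $\tilde\sigma\in(\sigma,2\sigma)$; the inductive step reuses the $n=2$ estimate on $u^{(n-1)}$ and the inductive hypothesis at a carefully chosen intermediate scale $\sigma_2 = \sigma_1/\sqrt{2C_2C_3}$ to absorb the top-order term. Your route instead assumes the full order-$n$ GN inequality on a bounded interval with the scale-correct additive term and then rescales. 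Both are valid; yours is a cleaner one-shot scaling argument \emph{if} that ingredient is on hand, while the paper's is more self-contained because it only ever uses the elementary one-step interpolation of Lemma~\ref{IntLem}.

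\smallskip

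\emph{The sourcing problem.} The inequality you ``record'' at the start,
\begin{align}
 \|w^{(n-1)}\|_{L^2(J)}^2 \leq C(n)\Big(\|w^{(n)}\|_{L^2(J)}^{2\theta}\|w\|_{L^2(J)}^{2(1-\theta)} + |J|^{-2(n-1)}\|w\|_{L^2(J)}^2\Big), \qquad \theta = \tfrac{n-1}{n},
\end{align}
is \emph{not} an instance of Lemma~\ref{IntLem}: that lemma concerns only $u,u',u''$, i.e.\ $j=1$, $m=2$, and gives you no access to $n\ge3$. Moreover, Lemma~3.3 of \cite{CheDalFonLeo} is, per the paper's own discussion, essentially the statement of Lemma~\ref{Lemma:NirIneq_n} itself, so citing it here is circular. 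The correct reference within this paper would be Theorem~\ref{GagNir} (appendix), but note that as stated its constant depends on $I$; to make your rescaling work you must first state it on a fixed reference interval (say $(0,1)$) and then scale to a general $J$, which is exactly how the $|J|^{-2(n-1)}$ factor appears. That added step should be made explicit, since otherwise the constant obtained after rescaling to $J=\sigma^{-1}I$ would a~priori depend on $|J|$. Once you fix this, the argument goes through; also replace $c=\min\{1,C(n)^{-1}\}$ by, e.g., $c=\min\{\tfrac12,C(n)^{-1}\}$ so that $c\in(0,1)$ strictly.
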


\begin{proof}
    We prove the inequality by induction on $n$ starting with $n = 2$. The case $\sigma = |I|$ is covered by Lemma~\ref{IntLem} with the choice $p = q = r = 2$. Now, for $0 < \sigma < |I|$, we set $m \coloneqq \max\{N \in \N: N < \frac{|I|}{\sigma}\}$. We subdivide the interval $I$ into $m$ disjoint open intervals $I_1\ldots,I_m$ each of length $\frac{|I|}{m}=:\tils$ and observe that $m\sigma < |I| < 2m \sigma$ which is equivalent to $\sigma < \tils < 2 \sigma$. Using Lemma~\ref{IntLem}, we have
    \begin{align} \label{basecase}
    \begin{split}
        \int_I (u')^2\dd x & = \sum_{i=1}^m \int_{I_i} (u')^2\dd x
        \leq C_1 \sum_{i=1}^m \left(\frac {1}{\tils^2} \int_{I_i} u^2\dd x + \tils^2 \int_{I_i} (u'')^2\dd x \right)\\
        & = C_1 \left(\frac {1}{\tils^2} \int_{I} u^2\dd x + \tils^2 \int_{I} (u'')^2\dd x \right) 
        \leq 4C_1 \left(\frac {1}{\sigma^2} \int_{I} u^2\dd x + \sigma^2 \int_{I} (u'')^2\dd x \right).
    \end{split}
    \end{align}
    Now we assume that \eqref{eq:NirIneq_n} holds for a fixed $n \in \N_{\geq2}$, i.e. that there exists $C_2>1$ such that
    \begin{align} \label{Nir_IA}
    \int_I (u^{(n-1)})^2\dd x \leq C_2 \left(\frac 1{\sigma^{2(n-1)}} \int_I u^2\dd x + \sigma^2 \int_I (u^{(n)})^2\dd x\right)
    \end{align}
    for every $0 < \sigma \leq |I|$. Let $0< \sigma_1 \leq |I|$ be arbitrarily chosen and fixed. Using \eqref{basecase} with $u^{(n-1)}$ instead of $u$, there exists $C_3>1$ such that
    \begin{align}
        \int_I (u^{(n)})^2\dd x & \leq C_3 \left(\frac 1{\sigma_1^2} \int_I (u^{(n-1)})^2\dd x + \sigma_1^2 \int_I (u^{(n+1)})^2\dd x\right)\\
        & \hspace{-0.138cm} \overset{\eqref{Nir_IA}}{\leq} C_3 \left( \frac {C_2}{\sigma_1^2} \left(\frac {1}{\sigma_2^{2(n-1)}} \int_I u^2\dd x + \sigma_2^2 \int_I (u^{(n)})^2\dd x\right) + \sigma_1^2 \int_I (u^{(n+1)})^2\dd x\right),
    \end{align}
    where $0 < \sigma_2 \coloneq  \frac{\sigma_1}{\sqrt{2C_2C_3}} < \sigma_1 \leq |I|$. We hence obtain
    \begin{align}
        \left(\frac 1{C_3} - \sigma_2^2 \frac{C_2}{\sigma_1^2} \right) \int_I (u^{(n)})^2\dd x 
        & \leq \frac {C_2}{\sigma_1^2\sigma_2^{2(n-1)}} \int_I u^2\dd x + \sigma_1^2 \int_I (u^{(n+1)})^2\dd x.
    \end{align}
    Substituting the definition of $\sigma_2$ and estimating  $2^{n-1}C_2^nC_3^{n-1}>1$ finally yields \eqref{eq:NirIneq_n}.
\end{proof}

 We are now in a position to prove Theorem \ref{Theo:NonLinInt_n}. We make use of the interpolation inequality from Lemma~\ref{Lemma:NirIneq_n} and take advantage of the quadratic coercivity condition \hyp3 on the double-well potential. The latter holds on intervals where $u$ is either strictly positive or strictly negative and therefore a careful examination of the roots of $u$ is necessary. The foundation of the proof is the corresponding proof for the second-order version by Cicalese et al. \cite{CicSpaZep}.

\begin{proof}[Proof of Theorem \ref{Theo:NonLinInt_n}.]
    By translation, we may assume $I=(0,l)$ for some $l>0$. We define 
        \begin{align}
            m\coloneq  \mint_0^l u^{(n-1)}\dd x = \frac{u^{(n-2)}(l)-u^{(n-2)}(0)}{l}.
        \end{align}
    By the Mean Value Theorem, there exists $y_0 \in I$ such that $u^{(n-1)}(y_0)=m$. From the Fundamental Theorem of Calculus we obtain for all $y \in I$
        \begin{align} \label{u-m_Ska_n}
        \begin{split}
            |u^{(n-1)}(y)-m| = |u^{(n-1)}(y)-u^{(n-1)}(y_0)| = \left|\int_y^{y_0} u^{(n)}\dd x\right| \leq \int_0^l |u^{(n)}|\dd x,
        \end{split}
        \end{align}
    and thus, using Young's and Jensen's inequality,
        \begin{align}
            \int_0^l (u^{(n-1)})^2 \dd y & 
            \leq \int_0^l (|u^{(n-1)}-m|+|m|)^2 \dd y
            \overset{\eqref{u-m_Ska_n}}{\leq} \int_0^l 2 l \int_0^l (u^{(n)})^2\dd x + 2 m^2 
            \dd y \\
            &= 2 l^2 \int_0^l (u^{(n)})^2\dd x + 2 l m^2.
        \end{align}
    Therefore it is enough to show that there exists $C>0$ such that
        \begin{align} \label{zz_Ska_n}
            l m^2 \leq C \left(\frac 1{l^{2n-2}} \int_0^l W(u)\dd x + l^2 \int_0^l(u^{(n)})^2\dd x\right).
        \end{align}
    Without loss of generality, we assume $m^2 > 4 l\int_0^l (u^{(n)})^2\dd x$, otherwise the claim follows immediately from $W \geq 0$. Using Jensen's inequality, we then have $2 \| u^{(n)}\|_{L^1(I)} < |m|$ and hence
        \begin{align} \label{m<u'_Ska_n}
            2|m|-2|u^{(n-1)}(y)| \leq 2 |m-u^{(n-1)}(y)| \overset{\eqref{u-m_Ska_n}}{\leq} 2 \int_0^l |u^{(n)}|\dd x < |m|.
        \end{align}
   Thus, we obtain $0 \leq \frac 12 |m| < |u^{(n-1)}(y)|$ for all $y \in I$. Consequently, applying Rolle's Theorem inductively yields that $u$ has at most $n-1$ roots in $I$. We denote by $(\alpha,\beta) \subset I$ some subinterval of maximal length without roots of $u$, where $\beta - \alpha \geq \frac {l}n$. Without loss of generality, we assume $u>0$ on $(\alpha,\beta)$. We now apply Lemma~\ref{Lemma:NirIneq_n} with $\sigma=\frac ln$ and the coercivity condition \hyp3 to $u-1$, and obtain
        \setlength\jot{0.5cm}
        \begin{align} \label{UseCoerc}
        \begin{split}
            c \int_\alpha^\beta (u^{(n-1)})^2\dd x 
            & \overset{\eqref{eq:NirIneq_n}}{\leq} \frac {n^{2n-2}}{l^{2n-2}} \int_\alpha^\beta (u-1)^2\dd x + \frac {l^2}{n^2} \int_\alpha^\beta (u^{(n)})^2\dd x\\
            & \overset{\hyp3}{\leq} n^{2n-2} \max\left\{\frac 1L,\frac 1{n^{2n}}\right\} \left(\frac {1}{l^{2n-2}}  \int_0^l  W(u)\dd x + l^2  \int_0^l(u^{(n)})^2\dd x\right).
        \end{split}
        \end{align}
    Moreover, using \eqref{m<u'_Ska_n}, we obtain 
        \begin{align} \label{finalstep}
            \frac {l}{n} m^2 \leq \int_\alpha^\beta m^2\dd x \leq 4 \int_\alpha^\beta (u^{(n-1)})^2\dd x
        \end{align}
    and hence, combining \eqref{UseCoerc} and \eqref{finalstep}, the claim \eqref{zz_Ska_n}.
\end{proof}

From now on, the constant $\lam_n = \lam_n(L)>0$ is fixed to be the optimal constant in Theorem~\ref{Theo:NonLinInt_n}. As alluded to in Section \ref{Sec:Prelim}, it separates the sub- and supercritical regime for the parameter $\lam$.

Analogously to \cite[Remark~3.2]{CicSpaZep}, by subdividing $\R$ into intervals of the form $(k,k+1)$ with $k \in \Z$ and applying \eqref{NonLinInt_n} to each of these intervals, we obtain the following version of Theorem~\ref{Theo:NonLinInt_n} on the real line.

\begin{cor}[Nonlinear interpolation on the real line] \label{corNonLinInt_R}
Let $n \in \N_{\geq2}$ and assume $W$ satisfies \hyp1 -- \hyp3. Then we have
    \begin{align} \label{NonLinInt_R}
        \lam_n \int_\R (u^{(n-1)})^2\dd x \leq \int_\R W(u)+(u^{(n)})^2\dd x    && \text{for all } u \in W^{n,2}_{\rm loc}(\R).
    \end{align}
\end{cor}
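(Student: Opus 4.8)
The plan is to obtain the inequality on $\R$ by summing the scale-invariant finite-interval estimate \eqref{NonLinInt_n} over the partition of $\R$ into unit intervals. The whole point is that the constant $\lam_n = \lam_n(L)$ furnished by Theorem~\ref{Theo:NonLinInt_n} does not depend on the interval $I$, so a single $\lam_n$ survives the summation. First I would dispose of the trivial case: if $\int_\R W(u) + (u^{(n)})^2 \dd x = +\infty$ there is nothing to prove, so one may concentrate on $W(u) \in L^1(\R)$ and $u^{(n)} \in L^2(\R)$; in fact, since all integrands below are nonnegative, this reduction is not strictly necessary and one may argue directly by monotone convergence.

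Next, for each $k \in \Z$ let $I_k$ denote the interval $(k,k+1)$. Since $u \in W^{n,2}_{\mathrm{loc}}(\R)$, the restriction $u|_{I_k}$ belongs to $W^{n,2}(I_k)$, so Theorem~\ref{Theo:NonLinInt_n} applies with $I = I_k$. Because $|I_k| = 1$, both scaling factors $|I_k|^{-(2n-2)}$ and $|I_k|^2$ collapse to $1$, and \eqref{NonLinInt_n} reads
\[
    \lam_n \int_{I_k} (u^{(n-1)})^2 \dd x \leq \int_{I_k} W(u) \dd x + \int_{I_k} (u^{(n)})^2 \dd x .
\]
I would then sum over $k \in \Z$. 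The intervals $\{I_k\}_{k \in \Z}$ are pairwise disjoint with $\bigcup_{k} I_k = \R \setminus \Z$, a set of full Lebesgue measure, so for any nonnegative measurable $g$ one has $\sum_{k \in \Z} \int_{I_k} g \dd x = \int_\R g \dd x$; applied to $g = (u^{(n-1)})^2$, $W(u)$, and $(u^{(n)})^2$ (all nonnegative, with the interchange of sum and integral justified by Tonelli's theorem) this yields exactly \eqref{NonLinInt_R}. As a byproduct, finiteness of the right-hand side forces $u^{(n-1)} \in L^2(\R)$.

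There is essentially no obstacle here beyond bookkeeping: the substantive work is the scale invariance already encoded in Theorem~\ref{Theo:NonLinInt_n}, which is precisely what permits a uniform constant across all the unit cells. The only points meriting a line of justification are that the countable set of integer endpoints is Lebesgue-null — so passing to $\R \setminus \Z$ changes none of the integrals — and that summation and integration commute, which is immediate since every integrand is nonnegative.
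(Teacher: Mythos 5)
Your argument — partitioning $\R$ into the unit intervals $(k,k+1)$, applying Theorem~\ref{Theo:NonLinInt_n} on each (where both scaling factors become $1$), and summing using nonnegativity — is exactly how the paper derives the corollary. The proposal is correct and coincides with the paper's approach.
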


\subsection{Lower bound and compactness} \label{Subsec:LowBound}

Now, we use the nonlinear interpolation inequality from Theorem~\ref{Theo:NonLinInt_n} to estimate the energy $G_\eps^{\lam,n}$ from below by the related energy functional $G_\eps^{0,n}$ without the negative term. The latter has already been examined regarding compactness and $\Gamma$-convergence by Brusca et al. \cite{BruDonSol}, where the case $n=2$ was previously studied by Fonseca and Mantegazza \cite{FonMan}. The strategy for the estimation originates from \cite[Proposition~3.3]{CicSpaZep}.
 
 \begin{lemma}[Lower bound] \label{abschCic_n}
Let $n \in \N_{\geq2}$ and $\lam \in (0,\lam_n)$ and assume $W$ satisfies \hyp1--\hyp3. Let $I \subset \R$ be an open bounded interval and $\delta > 0$. Then there exists $\eps_0 > 0$ such that
    \begin{align} \label{eq:abschCic_n}
        \Big( 1 - \frac{\lam}{\lam_n} - \delta \Big) G_\eps^{0,n}[u] \leq G_\eps^{\lam,n}[u]   && \text{for all } u \in W^{n,2}(I), \ \eps \in (0, \eps_0).
    \end{align}
 \end{lemma}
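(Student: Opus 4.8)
The plan is to combine the rescaled nonlinear interpolation inequality with the structure of the energy. First I would observe that the claimed inequality is really just an $\eps$-rescaled version of Theorem~\ref{Theo:NonLinInt_n}. Apply \eqref{NonLinInt_n} on the fixed interval $I$: this gives
\begin{align}
\lam_n \int_I (u^{(n-1)})^2\dd x \leq \frac{1}{|I|^{2n-2}} \int_I W(u)\dd x + |I|^2 \int_I (u^{(n)})^2\dd x.
\end{align}
This is not yet scale-matched to the terms appearing in $G_\eps^{\lam,n}$, where the potential carries a factor $\frac 1\eps$, the $(n-1)$-st derivative a factor $\eps^{2n-3}$, and the $n$-th derivative a factor $\eps^{2n-1}$. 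The natural fix is to subdivide $I$ into subintervals of length comparable to $\eps$ (say $O(\eps)$ intervals of equal length $\ell_\eps \in [\eps, 2\eps)$, for $\eps$ small enough that $\ell_\eps \le |I|$), apply \eqref{NonLinInt_n} on each subinterval, and sum. On a subinterval of length $\ell_\eps \asymp \eps$ one gets, after multiplying through by $\eps^{2n-3}$,
\begin{align}
\lam_n \eps^{2n-3}\int_{I_j} (u^{(n-1)})^2\dd x \leq \frac{\eps^{2n-3}}{\ell_\eps^{2n-2}} \int_{I_j} W(u)\dd x + \eps^{2n-3}\ell_\eps^2 \int_{I_j} (u^{(n)})^2\dd x \le \frac{C}{\eps}\int_{I_j} W(u)\dd x + C\eps^{2n-1}\int_{I_j}(u^{(n)})^2\dd x,
\end{align}
using $\ell_\eps \asymp \eps$ to convert $\eps^{2n-3}/\ell_\eps^{2n-2} \asymp 1/\eps$ and $\eps^{2n-3}\ell_\eps^2 \asymp \eps^{2n-1}$; the constants $C$ depend only on $n$. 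Summing over $j$ yields $\lam_n \eps^{2n-3}\int_I (u^{(n-1)})^2 \le C\big(\frac 1\eps\int_I W(u) + \eps^{2n-1}\int_I (u^{(n)})^2\big)$. The catch is that this crude bound has a constant $C$ that is \emph{not} close to $1$, so it would only give the estimate with $1-\lam/\lam_n$ replaced by something possibly negative; this is where care is needed.

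To get the sharp constant $1 - \lam/\lam_n - \delta$, the right approach is to not subdivide but instead use that the estimate improves as the interval shrinks in the correct combined sense. Concretely, rewrite \eqref{NonLinInt_n} with the substitution that on an interval of length $\ell$ one may also write it as $\lam_n \int (u^{(n-1)})^2 \le \frac{1}{\ell^{2n-2}}\int W(u) + \ell^2 \int (u^{(n)})^2$; the sharpness comes from choosing $\ell$ so that the two prefactors on the right match the $\eps$-weights exactly. Comparing $\frac{\eps^{2n-3}}{\ell^{2n-2}}$ with $\frac 1\eps$ forces $\ell^{2n-2} = \eps^{2n-2}$, i.e.\ $\ell = \eps$; and then $\eps^{2n-3}\ell^2 = \eps^{2n-1}$ matches the top-order weight \emph{exactly with constant one}. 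So the clean route is: partition $I$ into $\lfloor |I|/\eps\rfloor$ intervals of length exactly $\eps$ plus one leftover interval of length $< \eps$; on each length-$\eps$ interval apply \eqref{NonLinInt_n} with $|I| \rightsquigarrow \eps$ to get, after multiplying by $\eps^{2n-3}$, exactly $\lam_n \eps^{2n-3}\int_{I_j}(u^{(n-1)})^2 \le \frac1\eps\int_{I_j} W(u) + \eps^{2n-1}\int_{I_j}(u^{(n)})^2$ with constant $1$. Handling the short leftover interval $I_0$ of length $\tau < \eps$: here \eqref{NonLinInt_n} gives $\lam_n\int_{I_0}(u^{(n-1)})^2 \le \tau^{-(2n-2)}\int_{I_0}W(u) + \tau^2\int_{I_0}(u^{(n)})^2$, and multiplying by $\eps^{2n-3}$ with $\tau<\eps$ makes the coefficient of the potential term $\eps^{2n-3}\tau^{-(2n-2)} \ge 1/\eps$, which is the wrong direction. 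I would avoid this by simply absorbing the leftover into an adjacent interval, working with intervals of length in $[\eps, 2\eps)$, and tracking that the multiplicative distortion is at most $2^{2n-2}$ only on that one interval — which, being a single interval of measure $O(\eps)$, contributes a vanishing fraction of the total as $\eps \to 0$. Actually the cleanest formulation: cover $I$ by finitely many disjoint intervals each of length in $[\eps, (1+\delta')\eps]$, apply \eqref{NonLinInt_n} on each, and note the prefactor distortions are $(1+\delta')^{\pm(2n-2)} \to 1$.

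Thus the key steps in order: (1) reduce to proving $\lam_n \eps^{2n-3}\int_I (u^{(n-1)})^2 \le (1+\eta(\eps))\big(\frac1\eps\int_I W(u) + \eps^{2n-1}\int_I(u^{(n)})^2\big)$ with $\eta(\eps) \to 0$; (2) partition $I$ into intervals $I_j$ of length $\ell_j \in [\eps, (1+\delta')\eps]$ for a $\delta'$ to be fixed in terms of $\delta$, which is possible for $\eps$ small; (3) apply Theorem~\ref{Theo:NonLinInt_n} on each $I_j$ and multiply by $\eps^{2n-3}$, using $\eps^{2n-3}\ell_j^{-(2n-2)} \le \eps^{-1}$ and $\eps^{2n-3}\ell_j^2 \le (1+\delta')^2 \eps^{2n-1}$; (4) sum over $j$ to get $\lam_n\eps^{2n-3}\int_I(u^{(n-1)})^2 \le \frac1\eps\int_I W(u) + (1+\delta')^2\eps^{2n-1}\int_I(u^{(n)})^2 \le (1+\delta')^2 G_\eps^{0,n}[u] + \frac{\lam_n}{\lam}\cdot 0$ — more precisely bound the right side by $\max\{1,(1+\delta')^2\}(\frac1\eps\int W + \eps^{2n-1}\int(u^{(n)})^2)$; (5) conclude $G_\eps^{\lam,n}[u] = G_\eps^{0,n}[u] - \lam\eps^{2n-3}\int_I(u^{(n-1)})^2 \ge G_\eps^{0,n}[u] - \frac{\lam}{\lam_n}(1+\delta')^2 G_\eps^{0,n}[u] = (1 - \frac{\lam}{\lam_n}(1+\delta')^2)G_\eps^{0,n}[u]$, and choose $\delta'$ small enough that $\frac{\lam}{\lam_n}(1+\delta')^2 \le \frac{\lam}{\lam_n} + \delta$, i.e.\ $\delta' \le \sqrt{1 + \delta\lam_n/\lam} - 1$, and let $\eps_0$ be the threshold from step (2). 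The main obstacle is precisely the sharp-constant bookkeeping in steps (2)–(4): one must partition with interval lengths just barely above $\eps$ so that $\eps^{2n-3}\ell^{-(2n-2)}$ does not overshoot $\eps^{-1}$, and verify such a partition exists for all small $\eps$; the inequality $\eps^{2n-3}\ell^{-(2n-2)}\le\eps^{-1}$ needs $\ell\ge\eps$, while $\eps^{2n-3}\ell^2\le(1+\delta')^2\eps^{2n-1}$ needs $\ell\le(1+\delta')\eps$, and the window $[\eps,(1+\delta')\eps]$ contains an integer divisor of $|I|$ once $\eps$ is small — this is the only genuinely delicate point, and everything else is bookkeeping with $W\ge 0$ and the definitions of $G_\eps^{\lam,n}$ and $G_\eps^{0,n}$.
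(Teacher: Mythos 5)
Your proof is correct and follows essentially the same route as the paper: subdivide $I$ into roughly $|I|/\eps$ subintervals of length close to $\eps$, apply Theorem~\ref{Theo:NonLinInt_n} on each, and verify that the resulting weights match those in $G_\eps^{0,n}$ up to factors converging to $1$ as $\eps\to 0$, so the negative term can be absorbed at cost $\lam/\lam_n+\delta$. The only cosmetic difference is that the paper first substitutes $v(x)=u(\eps x)$ and uses subintervals of length $l_\eps\le 1$ (so the potential coefficient is the binding one), while you keep the original variable and round the lengths upward into $[\eps,(1+\delta')\eps]$ (so the $n$-th derivative coefficient is binding); both close the gap by the same limiting argument.
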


 \begin{proof}
     We use the substitution $x \mapsto \eps x$, with $v(x) \coloneq  u(\eps x)$ and $\frac{I}{\eps} \coloneq \{x \in \mathbb{R}: \eps x \in I\}$. For every $u \in W^{n,2}(I)$, this results in the representation
    \begin{align}
        G^{\lam,n}_\eps[u]
        = \int_{\frac{I}{\eps}} W(v(x)) - \lam (v^{(n-1)}(x))^2 + (v^{(n)}(x))^2\dd x.
    \end{align}
     Next, we set $m_\eps \coloneq  \min\{m \in \mathbb{N}: m \geq \frac{|I|}{\eps}\}$ and subdivide the interval $\frac{I}{\eps}$ into $m_\eps$ disjoint open intervals $I_\eps^1, \ldots, I_\eps^{m_\eps}$ each of length $\frac{|I|}{\eps m_\eps} \eqcolon  l_\eps \leq 1$. Applying Theorem~\ref{Theo:NonLinInt_n} on each subinterval, we get
    \setlength\jot{0.5cm}
     \begin{align}
         G_\eps^{\lam,n}[u] & = \sum_{i=1}^{m_\eps} \int_{I_\eps^i}W(v) - \lam (v^{(n-1)})^2 + (v^{(n)})^2\dd x \\
         & \geq \sum_{i=1}^{m_\eps}  \int_{I_\eps^i} \! W(v)\dd x - \frac {\lam}{\lam_n}\bigg( \frac 1{l_\eps^{2n-2}}   \int_{I_\eps^i} \! W(v)\dd x + l_\eps^2  \int_{I_\eps^i}  (v^{(n)})^2 \dd x  \bigg) + \int_{I_\eps^i}  (v^{(n)})^2 \dd x \\
         & = \left(1-\frac{\lam}{\lam_n l_\eps^{2n-2}}\right) \int_{\frac{I}{\eps}} W(v)\dd x + \left(1-\frac{\lam l_\eps^2}{\lam_n}\right) \int_{\frac{I}{\eps}} (v^{(n)})^2\dd x.
    \end{align}
     The claim follows from resubstitution and $\lim_{\eps \to 0} l_\eps=\lim_{\eps \to 0} \frac{|I|}{\eps m_\eps} =1$, which is a direct consequence of the definition of $m_\eps$.
 \end{proof}

 The compactness property for $G_\eps^{\lam,n}$ (see Theorem~\ref{Theo:comp_n}) can now be derived from Lemma~\ref{abschCic_n} and the corresponding compactness property for $G_\eps^{0,n}$ proven by Brusca et al. for even more general double-well potentials (see {\cite[Proposition~5.1]{BruDonSol}}). However, they show compactness with respect to convergence in measure. In order to improve the result to $L^1$-convergence, we use Vitali's Convergence Theorem by employing the coercivity condition \hyp3. In contrast to the proof of Theorem~\ref{Theo:NonLinInt_n}, a linear coercivity condition would suffice here as well.

\begin{proof}[Proof of Theorem \ref{Theo:comp_n}.]
    According to Vitali's Convergence Theorem (see \cite[Satz 6.25]{Klenke}), the convergence of $u_\eps$ in $L^1(I)$ is equivalent to the convergence in measure if $u_\eps$ is uniformly integrable, that is, if it satisfies
        \begin{enumerate}
        \item[\eni] $\sup_{\eps} \|u_\eps\|_{L^1(I)} < \infty$ and
        \item[\enii] for every $\gamma>0$ there exists $\delta(\gamma)>0$ such that $\sup_\eps \|u_\eps\|_{L^1(J)} \leq \gamma$ for every measurable set $J \subset I$ with $|J| < \delta(\gamma)$
        \end{enumerate}
    (see \cite[Satz 6.24]{Klenke}). Moreover, the theorem states that the two limit functions coincide. Therefore, if we show the uniform integrability of $u_\eps$, we can deduce the claim from the compactness result \cite[Proposition 5.1]{BruDonSol}. Without loss of generality, after choosing a subsequence, we may consider $u_\eps \in W^{n,2}(I)$ with $\sup_\eps G_\eps^{\lam,n}[u_\eps] \leq \tilde{M} < \infty$.

    To show \eni, we use the coercivity condition \hyp3 and obtain
    \begin{align}
        W(t) \geq L(t \mp 1)^2 \geq L(\pm 2t-3) \quad \text{for all } \pm t \geq 0,
    \end{align}
    which is equivalent to
    \begin{align}
        \frac{W(t)}{2L} + \frac 32 \geq |t| \quad  \text{for all } t \in \R.
    \end{align}
    Together with Lemma~\ref{abschCic_n}, for sufficiently small $\eps$, this yields
    \begin{align} \label{viatliabsch}
    \begin{split}
        \int_I |u_\eps|\dd x
        & \leq \frac 1{2L} \int_I W(u_\eps)\dd x + \frac 32 |I| 
        \leq \frac 1{2L} \int_I W(u_\eps) + \eps^{2n} (u_\eps^{(n)})^2\dd x + \frac 32 |I| \\
        & = \frac \eps{2L} G_\eps^{0,n}[u_\eps] + \frac 32 |I|
        \leq \frac {\eps C}{2L} G_\eps^{\lam,n}[u_\eps] + \frac 32 |I|
        \leq \frac {\eps C}{2L} \tilde{M} + \frac 32 |I| \leq M < \infty,
    \end{split}
    \end{align}
    where $M$ only depends on $L$, $n$ and $\lam$.

    To show \enii, let $\gamma >0$, and choose $\eps_0 \coloneq \frac {L}{C\tilde{M}} \gamma$. Then, we find
    \begin{align}
        \frac {\eps C}{2L} \tilde{M} \leq \frac{\gamma}{2} \quad \text{for all } \eps \in (0,\eps_0).
    \end{align}
    Consequently, for sufficiently small $\eps$, if we choose $\delta \coloneq \frac \gamma3$ and reuse the estimate \eqref{viatliabsch}, we obtain
    \begin{align}
        \int_J |u_\eps|\dd x \leq \frac {\eps C}{2L} \tilde{M} + \frac 32 |J| \leq \frac \gamma2 + \frac \gamma2 = \gamma \quad \text{for } |J| \leq \delta,
    \end{align}
    which concludes the proof.
\end{proof}

In Lemma~\ref{abschCic_n}, a straightforward application of the nonlinear interpolation inequality from Theorem~\ref{Theo:NonLinInt_n} yielded the desired estimate by a known functional of the same order and therefore the compactness property we needed. However, it is also feasible to estimate $G^{\lam,n}_\eps$ by lower-order functionals using a technique by Chermisi et al. \cite[Section~4]{CheDalFonLeo}. In the following remark, we briefly discuss this alternative approach.

\begin{rem}[Alternative lower bound]
Instead of bounding $G^{\lam,n}_\eps$ from below by $G^{0,n}_\eps$ as in Lemma~\ref{abschCic_n}, it can also be estimated by $G^{0,n-1}_\eps$ of one order lower. To see this, we again use the representation
    \begin{align}
         G_\eps^{\lam,n}[u] & = \sum_{i=1}^{m_\eps} \int_{I_\eps^i}W(v) - \lam (v^{(n-1)})^2 + (v^{(n)})^2\dd x
    \end{align}
from the first part in the proof of Lemma~\ref{abschCic_n}. Then, we apply a technique from \cite[Section~4]{CheDalFonLeo} and write
    \begin{align} \label{etasumme}
    \begin{split}
        \frac{W(v)}{l_\eps^{2n-2}} - \lam(v^{(n-1)})^2+ l_\eps^2(v^{(n)})^2 = & \left(1-\frac{\lam_n-\lam}{\lam_n+1}\right) \! \left(\frac{W(v)}{l_\eps^{2n-2}}-\lam_n(v^{(n-1)})^2+l_\eps^2(v^{(n)})^2\right)\\
        &\ + \frac{\lam_n-\lam}{\lam_n+1} \left(\frac{W(v)}{l_\eps^{2n-2}} + (v^{(n-1)})^2+l_\eps^2(v^{(n)})^2\right).
    \end{split}
    \end{align}
According to Theorem~\ref{Theo:NonLinInt_n}, for every $i \in \{1,\ldots,m_\eps\}$ it holds
    \begin{align}
        0 \leq \frac{1}{l_\eps^{2n-2}} \int_{I_\eps^i} W(v)\dd x - \lam_n \int_{I_\eps^i} (v^{(n-1)})^2\dd x + l_\eps^2 \int_{I_\eps^i} (v^{(n)})^2\dd x.
    \end{align}
Hence, by integrating equation \eqref{etasumme} over $I_\eps^i$, we get
    \begin{align}
    \begin{split}
        \int_{I_\eps^i} \frac{W(v)}{l_\eps^{2n-2}} -\lam(v^{(n-1)})^2+ l_\eps^2 (v^{(n)})^2 \dd x 
        & \geq \frac{\lam_n-\lam}{\lam_n+1} \int_{I_\eps^i} \frac{W(v)}{l_\eps^{2n-2}} +(v^{(n-1)})^2+ l_\eps^2 (v^{(n)})^2\dd x \\
        &\geq \frac{\lam_n-\lam}{\lam_n+1} \int_{I_\eps^i} \frac{W(v)}{l_\eps^{2n-2}} + (v^{(n-1)})^2\dd x.
    \end{split}
    \end{align}
From $\lim_{\eps \to 0} l_\eps=\lim_{\eps \to 0} \frac{|I|}{\eps m_\eps} =1$ and by summing over all $I_\eps^i$, we finally obtain that for any $\delta > 0$ there exists $\eps_0 > 0$ such that for every $\lam \in (0, \lam_n)$, $\eps \in (0, \eps_0)$ and $u \in W^{n,2}(I)$, we have
    \begin{align}
        \left(\frac{\lam_n - \lam}{\lam_n + 1} - \delta\right) G_\eps^{0,n-1}[u] \leq G_\eps^{\lam,n}[u].
    \end{align}
In particular, it is possible to bound the second-order functional $G^{\lam,2}_\eps$ by the classical Cahn-Hilliard functional $G_\eps^{0,1}$, as done by Chermisi et al. \cite[Section~4]{CheDalFonLeo}. It is also conceivable to estimate the higher-order functional $G^{\lam,n}_\eps$ inductively by $G_\eps^{0,1}$ for $\lam > 0$ sufficiently small through an estimation chain of the type
 \begin{align}
     G_\eps^{0,1} \leq C_1 G_\eps^{\lam,2} \leq C_1 G_\eps^{0,2} \leq \ldots \leq C_{n-2} G_\eps^{\lam,n-1} \leq C_{n-2} G_\eps^{0,n-1} \leq C_{n-1} G_\eps^{\lam,n}.
\end{align}

\end{rem}

\section{Asymptotic Analysis} \label{Sec:Gamma}

In this chapter, we prove the main result stated in Theorem~\ref{Gamma_conv_n}. To show the Liminf inequality, we define an energetically favored competitor sequence for every sequence $u_\eps \in W^{n,2}(I)$ converging in $L^1(I)$. The exact construction is explained in Section~\ref{Subsec:FavComp}. These auxiliary sequences are piecewise defined, and must therefore be carefully examined for regularity. Thus, we present a technical lemma that addresses the coupling of the different segments of the sequence. In Section~\ref{Subsec:Gamman}, we finally show the Liminf inequality and the Limsup inequality, using the compactness property from Section~\ref{Subsec:LowBound} and the considerations from Section~\ref{Subsec:FavComp}.

\subsection{Construction of favored competitors} \label{Subsec:FavComp}

 To prove the Liminf inequality, we consider sequences $u_\eps \in W^{n,2}(I)$ with $u_\eps \to u$ in $L^1(I)$ and $u \in BV(I,\{\pm1\})$. For each of these sequences, we construct a favored competitor sequence $v_\eps \in W^{n,2}(I)$ such that
\begin{align} \label{competitor}
    G_\eps^{\lam,n}[u] \leq \liminf_{\eps \to 0} G_\eps^{\lam,n}[v_\eps] \leq \liminf_{\eps \to 0} G_\eps^{\lam,n}[u_\eps].
\end{align}
 The idea of the construction is to modify the sequence $u_\eps$ so that it equates the limit function $u$ away from its points of discontinuity $s_1, \ldots , s_N \in S(u)$. To ensure that the sequence $v_\eps$ is a sequence of test functions for the optimal profile problem $C_W^{\lam,n}$ (see \eqref{Gammaconst} below), the coupling of $u_\eps$ to the constant segments must guarantee the competitor's absolute continuity of all derivatives up to order $n-1$. For certain $y_0,\ldots,y_{n-1} \in \R$, these couplings are given by rescaled versions of functions $\zeta \in C^n([0,1])$ satisfying
     \begin{align}
        \zeta(0)=y_0 \quad &\text{and} \quad \zeta(1)=1, \label{bed1_zeta} \\
        \zeta^{(k)}(0)=y_k \quad &\text{and} \quad \zeta^{(k)}(1)=0 \text{ for } k \in \{1,\ldots,n-1\}, \label{bed2_zeta}
     \end{align}
and functions $\eta \in C^n([0,1])$ satisfying
    \begin{align}
        \eta(0)=-1 \quad &\text{and} \quad \eta(1)=y_0, \label{bed1_eta} \\
        \eta^{(k)}(0)=0 \quad &\text{and} \quad \eta^{(k)}(1)=y_k \text{ for } k \in \{1,\ldots,n-1\}, \label{bed2_eta},
     \end{align}
respectively, depending on whether it is a coupling to $1$ or $-1$. In the subsequent considerations, we will choose $y_0,\ldots,y_{n-1}$ as the corresponding evaluations of the derivatives of $u_\eps$ to ensure the regularity of the favored competitor. We define the sets
    \begin{align}
        \AAA^n_I(y)&\coloneq \{\zeta \in C^n([0,1]): \zeta \text{ satisfies \eqref{bed1_zeta} and \eqref{bed2_zeta}}\}, \label{AI}\\
        \AAA^n_J(y)&\coloneq \{\eta \in C^n([0,1]): \eta \text{ satisfies \eqref{bed1_eta} and \eqref{bed2_eta}}\}, \label{AJ}
    \end{align}
and justify that they are not empty by showing the existence of polynomials satisfying the respective conditions.

 \begin{lemma}[Existence of a coupling] \label{poly}
     Let $n \in \N_{\geq2}$ and $y \coloneq (y_0,\ldots,y_{n-1}) \in \R^n$. There exist polynomials $p_n \in \AAA^n_I(y)$ and $q_n \in \AAA^n_J(y)$ of degree $N \leq 2n-1$.
 \end{lemma}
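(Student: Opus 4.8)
The plan is to read the conditions \eqref{bed1_zeta}--\eqref{bed2_zeta} (respectively \eqref{bed1_eta}--\eqref{bed2_eta}) as a two-point Hermite interpolation problem: for $p_n$ we prescribe the value together with the first $n-1$ derivatives at both endpoints $0$ and $1$, which amounts to $2n$ scalar conditions and matches the $2n$ coefficients of a polynomial of degree at most $2n-1$. One could simply invoke the classical unique solvability of Hermite interpolation, but I would instead give a direct construction that makes the degree bound transparent.

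For $p_n$, let $A(x) \coloneqq \sum_{k=0}^{n-1} y_k \frac{x^k}{k!}$ be the unique polynomial of degree $\le n-1$ with $A^{(k)}(0)=y_k$ for $k=0,\dots,n-1$, and make the ansatz $p_n(x) \coloneqq A(x) + x^n B(x)$ with $B$ a polynomial of degree $\le n-1$ still to be chosen. Since $x^n B(x)$ vanishes to order $n$ at $0$, the boundary conditions \eqref{bed1_zeta} and \eqref{bed2_zeta} at $x=0$ hold automatically for every such $B$. It remains to enforce $p_n(1)=1$ and $p_n^{(k)}(1)=0$ for $k=1,\dots,n-1$. By the Leibniz rule,
\[
\frac{d^k}{dx^k}\bigl(x^n B(x)\bigr)\Big|_{x=1} \;=\; \sum_{j=0}^{k} \binom{k}{j}\,\frac{n!}{(n-j)!}\, B^{(k-j)}(1),
\]
whose $j=0$ term is exactly $B^{(k)}(1)$ and whose remaining terms involve only $B^{(0)}(1),\dots,B^{(k-1)}(1)$. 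Hence the $n$ conditions at $x=1$ form a lower-triangular linear system for $\bigl(B^{(0)}(1),\dots,B^{(n-1)}(1)\bigr)$ with unit diagonal, so it has a unique solution; taking $B(x)\coloneqq\sum_{k=0}^{n-1}B^{(k)}(1)\frac{(x-1)^k}{k!}$ then yields $B$ of degree $\le n-1$, and $p_n=A+x^nB$ has degree $\le 2n-1$ and lies in $\AAA^n_I(y)$.

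For $q_n\in\AAA^n_J(y)$ I would argue by the symmetry $x\mapsto 1-x$. If $\eta$ satisfies \eqref{bed1_eta}--\eqref{bed2_eta}, then $\widetilde\eta(x)\coloneqq\eta(1-x)$ satisfies $\widetilde\eta(0)=y_0$, $\widetilde\eta(1)=-1$, $\widetilde\eta^{(k)}(0)=(-1)^k y_k$ and $\widetilde\eta^{(k)}(1)=0$ for $k=1,\dots,n-1$. This is a two-point Hermite problem of exactly the type solved above (only the value prescribed at $1$ and the signs of the derivatives prescribed at $0$ differ), so the same ansatz with $\widetilde A(x)=\sum_{k=0}^{n-1}(-1)^k y_k\frac{x^k}{k!}$ produces a polynomial solution $\widetilde\eta$ of degree $\le 2n-1$, and $q_n(x)\coloneqq\widetilde\eta(1-x)$ is the desired element of $\AAA^n_J(y)$, again of degree $\le 2n-1$.

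I do not expect a genuine obstacle here: the only point that needs care is the observation that the endpoint-$1$ conditions decouple into a triangular system (equivalently, the non-degeneracy of Hermite interpolation), which follows at once from the Leibniz rule. Accordingly the final write-up can be kept short, and one may even replace the explicit computation by a citation of the unique solvability of two-point Hermite interpolation.
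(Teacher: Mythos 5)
Your argument is correct, and it is a genuinely different proof from the one in the paper. The paper writes $p_n(x)=\sum_{i=0}^{2n-1}a_i x^i$, translates the boundary conditions into a $2n\times 2n$ linear system for the coefficients $(a_0,\dots,a_{2n-1})$, observes that the coefficient matrix is block lower triangular with one diagonal block $A$ and one full block $C_{ij}=\frac{(n+j-1)!}{(n+j-i)!}$, and then proves invertibility by evaluating $\det C$ via the classical binomial determinant identity $\det\bigl(\binom{n+j-1}{i-1}\bigr)_{i,j}=1$ (citing Netto). Your route instead makes the ansatz $p_n=A+x^nB$ with $A$ the Taylor polynomial absorbing the data at $0$, so that the $n$ conditions at $x=1$ become, via the Leibniz rule, a unit lower triangular system for $\bigl(B(1),B'(1),\dots,B^{(n-1)}(1)\bigr)$; invertibility is then immediate with no determinant computation. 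What the paper's approach buys is an explicit description of the full linear system (and hence, implicitly, of the unique solution) which feeds naturally into Remark~\ref{contpol} on continuous dependence of the coefficients on $y$; what your approach buys is that it sidesteps the combinatorial determinant identity entirely, reducing the whole thing to the elementary observation that multiplying by $x^n$ does not disturb the leading Leibniz term at $x=1$. Your reduction of $q_n$ to $p_n$ by the reflection $x\mapsto 1-x$ matches the paper's closing remark that $q_n$ is obtained by translation and reflection. One small point worth making explicit if you write this up: your construction also yields uniqueness among polynomials of degree $\le 2n-1$ (since the triangular system has a unique solution and $A$ is uniquely determined), and the $2n$ coefficients depend polynomially, hence continuously, on $y$ — so your proof covers Remark~\ref{contpol} as well.
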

 
 \begin{proof}
     We first establish the case $p_n \in \AAA^n_I(y)$. In general, polynomials of degree $N \leq 2n-1$ and their derivatives are of the form
     \begin{align}
         p_n(x) = \sum_{i=0}^{2n-1} a_i \, x^i, \qquad p_n^{(k)}(x) = \sum_{i=0}^{2n-1-k} \frac{(k+i)!}{i!}\, a_{k+i}\, x^i \quad\text{for } k \in \{1,\ldots,n-1\},
    \end{align}
    where $a_i \in \R$ for $i \in \{0,\ldots,2n-1\}$. With this representation, \eqref{bed1_zeta} and \eqref{bed2_zeta} read as
    \begin{align}
        a_0=y_0 \quad &\text{and} \quad \sum_{i=0}^{2n-1} a_i = 1, \\
        k!\,a_k = y_k \quad &\text{and} \quad \sum_{i=0}^{2n-1-k} \frac{(k+i)!}{i!}\, a_{k+i} = 0 \quad \text{for } k \in \{1,\ldots,n-1\}.
    \end{align}
    Consequently, finding a polynomial satisfying \eqref{bed1_zeta} and \eqref{bed2_zeta} is equivalent to solving a system of linear equations given by the matrix equation
    \begin{align} \label{GLS}
        \renewcommand{\arraystretch}{1.1}
        \Biggl( \begin{array}{rr}
            \mala{A}   &   \mala{0}   \\
            \mala{B}   &   \mala{C}
        \end{array} \Biggr)
        \big(\ a_0 \ \cdots \ a_{2n-1}\ \big)^T
        =
        \big(\ y_0 \ \cdots \ y_{n-1} \ 1 \ 0 \ \cdots \ 0\ \big)^T
    \end{align}
    where the coefficient matrix is a lower block triangular. The submatrices $A=(A_{ij}),\,\allowbreak{B=(B_{ij})},\,\allowbreak C=(C_{ij}) \in \R^{n \times n}$ are defined as
    \begin{align}
        A_{ij}= \left\{ \begin{array}{ll} (i-1)!, & i=j, \\    0, & i \neq j, \end{array} \right. \qquad
        B_{ij}= \left\{ \begin{array}{ll} \frac{(n+j-1)!}{(n+j-i)!} & i\leq j, \\    0, & i > j, \end{array} \right. \qquad
        C_{ij}=\frac{(n+j-1)!}{(n+j-i)!}
    \end{align}
    for all $i,j \in \{1,\ldots,n\}$. The system of equations \eqref{GLS} is uniquely solvable if and only if the determinant of the coefficient matrix is non-zero. This determinant is given by the product of the determinants of the submatrices $A$ and $C$ (see \cite[Section~2]{Sil}). Since $A$ is a diagonal matrix, we can easily calculate
    \begin{align}
        \det(A)=\prod_{i=1}^n (i-1)! > 0.
    \end{align}
    In order to determine the determinant of $C$, we write
    \begin{align}
        C_{ij}=\frac{(n+j-1)!}{(n+j-i)!}=(i-1)! \binom{n+j-1}{i-1}.
    \end{align}
    For $D=(D_{ij})\in \R^{n \times n}$ given by $D_{ij}=\binom{n+j-1}{i-1}$, we have $\det(D)=1$ (see \cite[Paragraph~167]{Net}), and thus
    \begin{align}
        \det(C)=\prod_{i=1}^n (i-1)! \det(D)
        =\prod_{i=1}^n (i-1)! > 0.
    \end{align}
    Hence, the coefficient matrix is invertible and \eqref{GLS} is uniquely solvable, so that we can conclude that there exists a polynomial $p_n$ of degree $N \leq 2n-1$ satisfying \eqref{bed1_zeta} and \eqref{bed2_zeta}. The polynomial $q_n$ can be obtained through translation and reflection of $p_n$.
 \end{proof}
 \begin{rem} \label{contpol}
     We note that the admissible polynomials in Lemma~\ref{poly} are uniquely determined if we require that the degree be no greater than $2n-1$. In addition, their coefficients continuously depend on the data $y \in \R^n$.
 \end{rem}

\subsection{\texorpdfstring{$\Gamma$}{Gamma}-convergence} \label{Subsec:Gamman}

 Equipped with the compactness property from Theorem~\ref{Theo:comp_n} and the investigations from Section~\ref{Subsec:FavComp}, we now separately prove the Liminf inequality and the Limsup inequality. However, we first address the well-definedness of the $\Gamma$-limit functional \eqref{Gammalimes}. Specifically, we show that the optimal profile problem
 \begin{align} \label{Gammaconst}
    C_W^{\lam,n} \coloneq  \inf\left\{\int_\R W(f) - \lam(f^{(n-1)})^2 + (f^{(n)})^2\dd x : f \in \AAA^n(\R) \right\},
\end{align}
where $\AAA^n(\R) \coloneq  \left\{ f \in W^{n,2}_{\mathrm{loc}}(\R) \colon f(x)=1 \text{ for } x > T \text{ and } f(x) = -1 \text{ for } x < -T  \text{ for some } T>0\right\}$, defines a positive constant. To this end, we refer to the study of the optimal profile problems in \cite[Section~4]{BruDonSol} and the estimate from Lemma~\ref{abschCic_n}.

\begin{prop}[Optimal Profile on the real line] \label{constwohl}
    Let $n \in \N_{\geq2}$, $\lam \in (0,\lam_n)$ and assume $W$ satisfies \hyp1 -- \hyp3. Then $C^{\lam,n}_W > 0$ is well-defined.
\end{prop}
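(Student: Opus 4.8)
The plan is to show two things: that the infimum defining $C_W^{\lam,n}$ runs over a nonempty class and equals a well-defined real number, and that this number is strictly positive. For nonemptiness I would simply exhibit a competitor: any smooth nondecreasing $g\colon\R\to\R$ with $g\equiv -1$ on $(-\infty,-1]$ and $g\equiv 1$ on $[1,\infty)$ (e.g.\ a mollified step function, or a Hermite interpolant built from the polynomials of Lemma~\ref{poly} with $y=0$) lies in $\AAA^n(\R)$, and on such a profile the integrand $W(g)-\lam(g^{(n-1)})^2+(g^{(n)})^2$ is continuous and supported in $[-1,1]$, hence integrable; thus $C_W^{\lam,n}<\infty$.

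Next I would record a structural fact that makes the functional well-defined. For $f\in\AAA^n(\R)$, $f$ is constant on $(-\infty,-T]$ and on $[T,\infty)$ for some $T>0$, so $f^{(k)}\equiv 0$ there for every $k\geq 1$; since $f^{(n-1)}$ has a continuous representative (one-dimensional Sobolev embedding $W^{1,2}_{\mathrm{loc}}(\R)\hookrightarrow C^0(\R)$), it is continuous on $\R$ and supported in $[-T,T]$, so $\int_\R (f^{(n-1)})^2\dd x<\infty$ for \emph{every} $f\in\AAA^n(\R)$. Hence $\int_\R W(f)-\lam(f^{(n-1)})^2+(f^{(n)})^2\dd x$ is well-defined with values in $(-\infty,+\infty]$, the only possibly infinite term being the nonnegative $\int_\R (f^{(n)})^2\dd x$, and no indeterminate form $\infty-\infty$ can occur.

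For the lower bound I would apply Corollary~\ref{corNonLinInt_R} (the real-line nonlinear interpolation), which gives, for every $f\in\AAA^n(\R)$,
\[
  \lam\int_\R (f^{(n-1)})^2\dd x=\frac{\lam}{\lam_n}\,\lam_n\int_\R (f^{(n-1)})^2\dd x\leq\frac{\lam}{\lam_n}\left(\int_\R W(f)\dd x+\int_\R (f^{(n)})^2\dd x\right).
\]
Subtracting this from $\int_\R W(f)+(f^{(n)})^2\dd x$ — the same manipulation as in the proof of Lemma~\ref{abschCic_n} with scaling factor $1$, which is legitimate in $[-\infty,+\infty]$ since the subtracted quantity is finite — yields
\[
  \int_\R W(f)-\lam(f^{(n-1)})^2+(f^{(n)})^2\dd x\geq\Big(1-\frac{\lam}{\lam_n}\Big)\int_\R W(f)+(f^{(n)})^2\dd x\geq\Big(1-\frac{\lam}{\lam_n}\Big)C_W^{0,n},
\]
where $C_W^{0,n}\coloneqq\inf\{\int_\R W(f)+(f^{(n)})^2\dd x:f\in\AAA^n(\R)\}$ is the optimal profile constant for $G_\eps^{0,n}$ and the last step uses $1-\lam/\lam_n>0$ (because $\lam\in(0,\lam_n)$) together with the definition of the infimum. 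Taking the infimum over $f$ gives $C_W^{\lam,n}\geq(1-\lam/\lam_n)C_W^{0,n}\geq 0$, hence $C_W^{\lam,n}>-\infty$; combined with the first paragraph this proves well-definedness. Finally, $C_W^{0,n}>0$ is the positivity of the optimal profile constant established by Brusca et al.\ \cite[Section~4]{BruDonSol}, and therefore $C_W^{\lam,n}\geq(1-\lam/\lam_n)C_W^{0,n}>0$.

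The argument is short once Corollary~\ref{corNonLinInt_R} and the quoted positivity $C_W^{0,n}>0$ are in hand, so I do not expect a serious obstacle. The two points that need genuine care are: first, ensuring the energy is well-defined in $(-\infty,+\infty]$ — this is precisely where the compact-support property of $f^{(n-1)}$ is used, and without it the step "subtract a finite quantity" would be unjustified; and second, keeping the inequality chain valid when $\int_\R (f^{(n)})^2\dd x=+\infty$, in which case both outer sides are $+\infty$ and the bound is trivial. One should also confirm that the constant $C_W^{0,n}$ appearing here coincides with the optimal profile constant analysed in \cite{BruDonSol}, so that its positivity may be invoked verbatim.
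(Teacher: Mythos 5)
Your proof is correct and takes essentially the same approach as the paper: both apply Corollary~\ref{corNonLinInt_R} to obtain $(1-\lam/\lam_n)\int_\R W(f)+(f^{(n)})^2\dd x \leq \int_\R W(f)-\lam(f^{(n-1)})^2+(f^{(n)})^2\dd x$ and then invoke the positivity of the $\lam=0$ optimal profile constant from Brusca et al.~\cite[Section~4]{BruDonSol}. You supply a few extra well-definedness details (nonemptiness of $\AAA^n(\R)$ and finiteness of the negative term via compact support of $f^{(n-1)}$) that the paper leaves implicit, but the core argument is identical.
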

\begin{proof}
     We show the claim by comparing it with the optimal profile problem
    \begin{align}
        C_W^n \coloneq \inf\left\{\int_\R W(f) + (f^{(n)})^2\dd x : f \in \AAA^n(\R) \right\},
    \end{align}
    which has already been shown to be a positive constant (see \cite[Section~4]{BruDonSol}). By Corollary~\ref{corNonLinInt_R}, we have
    \begin{align}
        \left(1-\frac{\lam}{\lam_n}\right) \int_{\R} W(f) +(f^{(n)})^2\dd x
        \leq \int_{\R} W(f) - \lam (f^{(n-1)})^2 + (f^{(n)})^2\dd x,
    \end{align}
    which yields the estimate 
    \begin{align}
        \left(1-\frac{\lam}{\lam_n}\right) C^n_W \leq C_W^{\lam,n} \leq C^n_W
    \end{align}
    and therefore the positivity and well-definedness of $C_W^{\lam,n}$.
\end{proof}

 Knowing that \eqref{Gammaconst} defines a positive constant, we now show that the functional $G^{\lam,n}$ defined by \eqref{Gammalimes} is an asymptotic lower bound for the sequence $G^{\lam,n}_\eps$, thus proving the validity of the Liminf inequality. As described in Section~\ref{Subsec:FavComp}, the strategy consists of finding a favored competitor sequence $v_\eps \in W^{n,2}(I)$ for every sequence $u_\eps \in W^{n,2}(I)$ that converges in $L^1(I)$, and proving \eqref{competitor}. Similar methods for the related functionals $G^{\lam,2}_\eps$ and $G^{0,2}_\eps$ are employed in \cite[Theorem~4.1]{CicSpaZep} and \cite[Proposition~2.7]{FonMan}, respectively.

\begin{prop}[Liminf inequality] \label{prop:liminf_inequality}
    Let $n \in \N_{\geq2}$ and $\lam \in (0,\lam_n)$. For every sequence ${u_\eps \in L^1(I)}$ and $u\in L^1(I)$ such that $u_\eps \to u$ in $L^1(I)$ as $\eps \to 0$, we have
    \begin{align}
        G^{\lam,n}[u] \leq \liminf_{\eps \to 0} G^{\lam,n}_\eps[u_\eps].
    \end{align}
\end{prop}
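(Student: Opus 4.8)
I would prove the Liminf inequality by the standard two-step reduction plus a competitor-construction argument. First I would dispose of the trivial cases: if $\liminf_{\eps\to 0} G_\eps^{\lam,n}[u_\eps] = +\infty$ there is nothing to show, so after passing to a subsequence (not relabeled) realizing the $\liminf$ as a finite limit, I may assume $\sup_\eps G_\eps^{\lam,n}[u_\eps] < \infty$ and in particular $u_\eps \in W^{n,2}(I)$ for all $\eps$. By Theorem~\ref{Theo:comp_n} (Compactness), up to a further subsequence $u_\eps \to \tilu$ in $L^1(I)$ for some $\tilu \in BV(I,\{\pm1\})$; since also $u_\eps \to u$, we get $u = \tilu \in BV(I,\{\pm1\})$, so the left-hand side is the finite quantity $C_W^{\lam,n}\,\#S(u)$, which is positive and well-defined by Proposition~\ref{constwohl}. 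Write $S(u) = \{s_1 < \cdots < s_N\}$.

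**Reduction to a neighborhood of each jump, then to the optimal profile.** The next step is the localization: fix $\rho > 0$ small enough that the intervals $J_j := (s_j - \rho, s_j + \rho)$ are pairwise disjoint and contained in $I$. On $I \setminus \bigcup_j J_j$ the functional $G_\eps^{\lam,n}$ is bounded below by $(1 - \tfrac{\lam}{\lam_n} - \delta)\,G_\eps^{0,n} \ge 0$ via Lemma~\ref{abschCic_n}, so it suffices to bound $\liminf_\eps G_\eps^{\lam,n}[u_\eps; J_j]$ from below by $C_W^{\lam,n}$ for each $j$ and sum. Here the heart of the matter, and the reason Section~\ref{Subsec:FavComp} was developed, is to replace $u_\eps$ restricted to $J_j$ by a competitor that genuinely connects $-1$ to $+1$ and extends to an admissible function on all of $\R$ for the optimal profile problem \eqref{Gammaconst}. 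Concretely, on the two end subintervals of $J_j$ I would graft $u_\eps$ onto the constants $\pm 1$ using rescaled versions of the coupling polynomials $p_n \in \AAA^n_I(y)$ and $q_n \in \AAA^n_J(y)$ from Lemma~\ref{poly}, with the data $y = (y_0,\dots,y_{n-1})$ chosen as the values $(u_\eps^{(k)}(\cdot))_{k=0}^{n-1}$ at the appropriate gluing points; by construction these couplings match $u_\eps$ and its first $n-1$ derivatives at the interior gluing point and match the constant and zero derivatives at the outer one, so the resulting function $v_\eps^{(j)}$ lies in $W^{n,2}$ and, after rescaling the $J_j$-picture to the line, is a test function for $C_W^{\lam,n}$. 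Hence $G_\eps^{\lam,n}[v_\eps^{(j)}] \ge C_W^{\lam,n}$, and it remains to show $\liminf_\eps G_\eps^{\lam,n}[u_\eps; J_j] \ge \liminf_\eps G_\eps^{\lam,n}[v_\eps^{(j)}]$, i.e. that the grafting does not increase the energy in the limit.

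**Controlling the grafting error.** This is the technical crux. On the two small gluing zones one must show the energy contributed by the coupling pieces — which involves $W$ evaluated on the polynomial bridge, the concave term $-\lam (v_\eps^{(n-1)})^2$, and the top-order term $(v_\eps^{(n)})^2$ — is negligible as $\eps \to 0$. The point is that since $u_\eps \to u$ in $L^1$ with equibounded energy, on the portion of $J_j$ away from $s_j$ the function $u_\eps$ is $L^1$-close to the constant $\pm 1$; a standard argument (interpolating $L^1$-smallness against the equibounded $W^{n,2}$-type energy via the Gagliardo–Nirenberg inequality of Theorem~\ref{GagNir}) shows one can choose the gluing points so that the boundary data $y = (u_\eps(\cdot), u_\eps'(\cdot),\dots,u_\eps^{(n-1)}(\cdot))$ are $o(1)$-close to $(\pm1, 0,\dots,0)$; since the polynomial coefficients depend continuously on $y$ (Remark~\ref{contpol}) and the bridge sits on an interval of length $\to 0$ after rescaling, its rescaled energy vanishes. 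I expect this last estimate — simultaneously making the $L^1$-defect, the derivative data, and the rescaled bridge energy all small while keeping the favored competitor admissible and the energy comparison \eqref{competitor} valid — to be the main obstacle, exactly as flagged in the introduction (\textquotedblleft we must ensure that the favored competitor belongs to $W^{n,2}(I)$\,\textquotedblright). Once \eqref{competitor} is established jumpwise, summing over $j = 1,\dots,N$, adding the nonnegative contribution on $I\setminus\bigcup_j J_j$, and letting $\delta \to 0$ in Lemma~\ref{abschCic_n} yields $C_W^{\lam,n}\,\#S(u) \le \liminf_{\eps\to0} G_\eps^{\lam,n}[u_\eps]$, which is the claim.
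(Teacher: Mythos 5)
Your overall strategy matches the paper's: reduce to $u\in BV(I,\{\pm1\})$ via compactness, localize to neighborhoods of the jump points, construct a competitor by grafting polynomial bridges (Lemma~\ref{poly}) from $u_\eps$ onto the constants $\pm1$, and control the bridge energy through continuity of the polynomial coefficients (Remark~\ref{contpol}) and the Gagliardo--Nirenberg inequality. However, there is a genuine gap in the one step you yourself flag as the crux. You claim that one can pick gluing points at which the \emph{unscaled} derivative data $y=(u_\eps,u_\eps',\dots,u_\eps^{(n-1)})$ are $o(1)$-close to $(\pm1,0,\dots,0)$. This is false: $L^1$-convergence of $u_\eps$ plus equiboundedness of $G_\eps^{\lam,n}$ only give $\|u_\eps^{(n)}\|_{L^2(I)}\lesssim\eps^{1/2-n}$, so intermediate derivatives typically blow up as $\eps\to 0$. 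The quantities that do go to zero (pointwise a.e.\ along a subsequence) are the $\eps$-rescaled derivatives $\eps^k u_\eps^{(k)}$, and these are exactly the boundary data the bridge must match once one works in the blown-up variable $x\mapsto x/\eps$, in which the competitor is a test function for $C_W^{\lam,n}$ on bridges of unit length. Using the raw $u_\eps^{(k)}$ in the matching conditions both misstates the correct $C^{n-1}$ gluing requirement and leads to a limit claim that does not hold.

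Relatedly, the "standard argument" you invoke needs to be made explicit, because the rate is what makes the rest work. The paper's chain is: (i) absorb the concave term via Lemma~\ref{abschCic_n} to obtain $\eps^{2n-1}\|u_\eps^{(n)}\|_{L^2(I)}^2 \le C$; (ii) apply Theorem~\ref{GagNir} with $p=\tfrac{2n}{2n-k}$, $r=2$, $q=1$, $\theta=\tfrac{k}{n}$ to get $\|u_\eps^{(k)}\|_{L^p(I)}\lesssim \eps^{k(1-2n)/(2n)}+1$; (iii) H\"older then gives $\eps^{\alpha}\|u_\eps^{(k)}\|_{L^1(I)}\to0$ for $\alpha>k\tfrac{2n-1}{2n}$, in particular for $\alpha=k$; and (iv) extract a further subsequence with $\eps^k u_\eps^{(k)}\to 0$ a.e., so good gluing points exist. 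Without the absorption estimate in (i) your Gagliardo--Nirenberg application has nothing quantitative to interpolate against. Finally, to make the bridge energy vanish one should not only send the abstract data to $\pm e_1$ but also pass through the \emph{one-sided} optimal profile functionals $I^{\lam,n}(y),J^{\lam,n}(y)$ and compare them to the nonnegative $I^{0,n},J^{0,n}$ via Theorem~\ref{Theo:NonLinInt_n} on the unit interval; otherwise the concave term $-\lam(\,\cdot\,^{(n-1)})^2$ on the bridge has no sign control and the claim that its contribution is negligible is unsupported.
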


\begin{proof} Without loss of generality, let $u_\eps \in W^{n,2}(I)$ and
    \begin{align} \label{boundedseq_n}
       \liminf_{\eps \to 0} G_\eps^{\lam,n}[u_\eps] \leq C < \infty.
    \end{align}
     Exploiting Theorem~\ref{Theo:comp_n}, we conclude $u \in BV(I,\{\pm1\})$. Now, let $S(u)\coloneq \{s_1,\ldots,s_N\}$ with $s_1<\ldots<s_N$ be the set of discontinuity points of $u$, which is finite due to the bounded variation of $u$, and $\delta_0\coloneq \min\{s_{i+1}-s_i \colon 1\leq i \leq N-1\}$ the minimal distance between two adjacent points in $S(u)$. We choose a fixed $\delta \in (0,\frac{\delta_0}{2})$. Up to subsequences, we have $u_\eps \to u$ pointwise almost everywhere in $I$ as $\eps \to 0$.
     
    \begin{figure}[htb]
\centering
\begin{tikzpicture}[scale=1]
\draw[shift={(0,1)}] (2pt,0pt) -- (-2pt,0pt) node[below=0.5em] {};
\draw[shift={(0,-1)}] (2pt,0pt) -- (-2pt,0pt) node[below=0.5em] {};
\draw[->] (-6,0) -- (6,0) node[right] {$x$};
\draw[->] (0,-2) -- (0,2);% node[above] {\textcolor{blue}{$u(x)$}};
\draw[-,blue] (-5.5,-1) -- (-4,-1) -- (-4,1) -- (-3,1) -- (-3,-1) -- (-1.5,-1) -- (-1.5,1) -- (1,1) -- (1,-1) -- (2,-1) -- (2,1) -- (4,1) -- (4,-1) -- (5.5,-1);
\draw[violet] (-3.5,0) node[xshift=-2pt] {\textbf{$)$}};
\draw[violet] (-4,0) node[yshift=-1.4cm] {\textbf{$s_1$}};
\node at (-4,0) [circle,fill=violet,inner sep=1.2pt]{};
\draw[violet] (-2.5,0) node[xshift=-2pt] {\textbf{$)$}};
\draw[violet] (-3,0) node[yshift=-1.4cm] {\textbf{$s_2$}};
\node at (-3,0) [circle,fill=violet,inner sep=1.2pt]{};
\draw[violet] (-1,0) node[xshift=-2pt] {\textbf{$)$}};
\draw[violet] (-1.5,0) node[yshift=-1.4cm] {\textbf{$s_3$}};
\node at (-1.5,0) [circle,fill=violet,inner sep=1.2pt]{};
\draw[violet] (1.5,0) node[xshift=-2pt] {\textbf{$)$}};
\draw[violet] (1,0) node[yshift=-1.4cm] {\textbf{$s_4$}};
\node at (1,0) [circle,fill=violet,inner sep=1.2pt]{};
\draw[violet] (2.5,0) node[xshift=-2pt] {\textbf{$)$}};
\draw[violet] (2,0) node[yshift=-1.4cm] {\textbf{$s_5$}};
\node at (2,0) [circle,fill=violet,inner sep=1.2pt]{};
\draw[violet] (4.5,0) node[xshift=-2pt] {\textbf{$)$}};
\draw[violet] (4,0) node[yshift=-1.4cm] {\textbf{$s_6$}};
\node at (4,0) [circle,fill=violet,inner sep=1.2pt]{};
\draw[violet] (-4.5,0) node[xshift=2pt] {\textbf{$($}};
\draw[violet] (-3.5,0) node[xshift=2pt] {\textbf{$($}};
\draw[violet] (-2,0) node[xshift=2pt] {\textbf{$($}};
\draw[violet] (0.5,0) node[xshift=2pt] {\textbf{$($}};
\draw[violet] (1.5,0) node[xshift=2pt] {\textbf{$($}};
\draw[violet] (3.5,0) node[xshift=2pt] {\textbf{$($}};
\node[orange!85!white]  at (-1.3,1.2) {\textcolor{blue}{$u$}};
\end{tikzpicture}
\caption{We construct non-intersecting balls around the points of discontinuity of $u$.}
\end{figure}
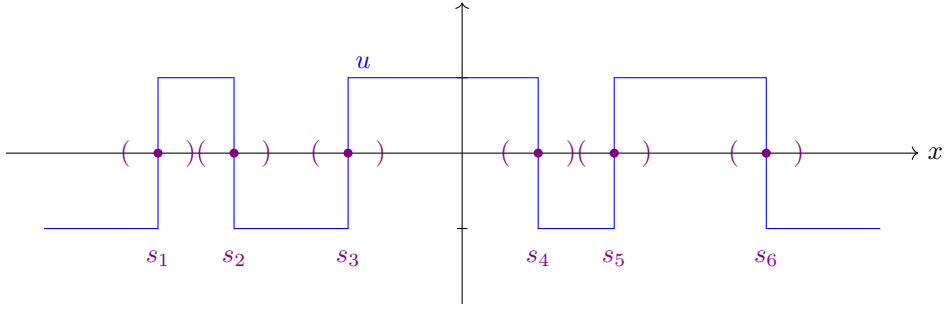%
    
    \noindent Moreover, using Lemma~\ref{abschCic_n} and choosing $\eps>0$ sufficiently small, we have
    \begin{align}
        \eps^{2n-1} \|u_\eps^{(n)}\|^2_{L^2(I)} 
        & \overset{\eqref{boundedseq_n}}{\leq} C+ \int_I \lam\eps^{2n-3} (u_\eps^{(n-1)})^2 - \frac 1\eps W(u_\eps)\dd x \\
        & \overset{\eqref{eq:abschCic_n}}{\leq} C + \left(\frac{\lam}{\lam_n} + \delta\right) \eps^{2n-1} \|u_\eps^{(n)}\|_{L^2(I)}^2 
    \end{align}
    for all $u_\eps \in W^{n,2}(I)$ and $\lam \in (0,\lam_n)$, which yields $\|u_\eps^{(n)}\|_{L^2(I)} \leq C \eps^{\frac 12 -n}$ by absorption. With this estimate and a suitable formulation of the Gagliardo-Nirenberg inequality given in Theorem~\ref{GagNir}, we infer that the derivatives up to order $n-1$ are also converging. More precisely, choosing the parameters $p = \frac{2n}{2n - k}$, $r = 2$, $q = 1$ and $\theta = \frac kn$, we obtain
        \begin{align}
            \|u_\eps^{(k)}\|_{L^p(I)} \leq C \left(\|u_\eps^{(n)}\|_{L^2(I)}^{\frac kn} \|u_\eps\|_{L^1(I)}^{1 - \frac kn} + \|u_\eps\|_{L^1(I)} \right) \leq C\left(\eps^{k\frac{1 - 2n}{2n}} + 1\right),
        \end{align}
    from which we find, using Hölder's inequality, $\eps^\alpha \|u_\eps^{(k)}\|_{L^1(I)} \to 0$ as $\eps \to 0$ for $\alpha > k \frac{2n - 1}{2n}$. In particular, we find yet again a subsequence (not relabeled) such that $\eps^{k} u^{(k)}_\eps \to 0$ pointwise almost everywhere in $I$ as $\eps \to 0$. Hence, for every $\sigma > 0$ we find $\eps_0 = \eps_0(\sigma) > 0$ and two points $x_{\sigma,i}^+,x_{\sigma,i}^- \in B_\delta(s_i)\cap I$ such that for $\eps  \in (0,\eps_0)$ we have
    \begin{align} \label{convlower}
      |u_\eps(x_{\sigma,i}^\pm)\mp 1|<\sigma \quad \text{and} \quad
      |\eps^{k} u^{(k)}_\eps(x_{\sigma,i}^\pm)|<\sigma \quad \text{for } k \in \{1,\ldots, n-1\}. 
    \end{align}
    Given $y \coloneq (y_0,\ldots,y_{n-1}) \in \R^n$, we define $I^{\lam,n}, J^{\lam,n}: \R^n \to \R$ by
    \begin{align}
        I^{\lam,n}(y)&\coloneq \inf \Big\{ \int_0^1 W(\zeta)-\lam(\zeta^{(n-1)})^2+(\zeta^{(n)})^2\dd x: \zeta \in \AAA^n_I(y) \Big\},\\
        J^{\lam,n}(y)&\coloneq \inf \Big\{ \int_0^1 W(\eta)-\lam(\eta^{(n-1)})^2+(\eta^{(n)})^2\dd x: \eta \in \AAA^n_J(y) \Big\},
    \end{align}
    where $\AAA^n_I(y)$ and $\AAA^n_J(y)$ are defined in \eqref{AI} and \eqref{AJ}, respectively. Testing with admissible polynomials $p_n \in \AAA^n_I(y)$ and $q_n \in \AAA^n_J(y)$ of degree $N \leq 2n-1$, whose existence is ensured by Lemma~\ref{poly}, yields
    \begin{align}
        0 \leq I^{0,n}(y) \leq \int_0^1 W(p_n)+(p_n^{(n)})^2 \dd x  \quad \text{and} \quad  0 \leq J^{0,n}(y) \leq \int_0^1 W(q_n)+(q_n^{(n)})^2 \dd x.
    \end{align}
    Moreover, we observe that $\tilde{p}_n \in \AAA^n_I(e_1)$ and $\tilde{q}_n \in \AAA^n_J(-e_1)$ are given by constant functions, where $e_1$ denotes the first standard unit vector of $\R^n$. From \hyp1 and the continuous dependence of the polynomials on $y$ (see Remark~\ref{contpol}), we hence conclude
    \begin{align}
        \lim_{y \to e_1} I^{0,n}(y) =  \lim_{y \to -e_1} J^{0,n}(y) = 0.
    \end{align}
    From Theorem~\ref{Theo:NonLinInt_n}, it follows
    \begin{align}
        \left(1- \frac \lam{\lam_n}\right)I^{0,n} \leq I^{\lam,n} \leq I^{0,n} \quad \text{and} \quad \left(1- \frac \lam{\lam_n}\right)J^{0,n} \leq J^{\lam,n} \leq J^{0,n}
    \end{align}
    for all $\lam \in (0,\lam_n)$, thereby also obtaining
    \begin{align} \label{IlamJlam_n}
        \lim_{y \to e_1} I^{\lam,n}(y) =  \lim_{y \to -e_1} J^{\lam,n}(y) = 0.
    \end{align}
    Without loss of generality, we assume $x_{\sigma,i}^- < x_{\sigma,i}^+$ and define $y^\pm_{\eps,\sigma,i} \in \R^n$ component-wise by
    \begin{align}
        (y^{\pm}_{\eps,\sigma,i})_k \coloneq \left\{\begin{array}{ll}
    u_\eps(x_{\sigma,i}^\pm) & \quad \text{$k=0$}, \\
    \eps^{k} u^{(k)}_\eps(x_{\sigma,i}^\pm) & \quad \text{$k \in \{1,\ldots,n-1\}$}.
    \end{array}\right.
    \end{align}
    We note that $y_{\eps,\sigma,i}^\pm \to \pm e_1$ as $\eps \to 0$ due to \eqref{convlower}. Let $\zeta_{\eps,\sigma,i} \in \AAA^n_I(y^+_{\eps,\sigma,i})$ and $\eta_{\eps,\sigma,i} \in \AAA^n_J(y^-_{\eps,\sigma,i})$ such that
    \begin{align} \label{abschI_lamJ_lam_n}
    \begin{split}
        \int_0^1 W(\zeta_{\eps,\sigma,i})-\lam(\zeta^{(n-1)}_{\eps,\sigma,i})^2+(\zeta^{(n)}_{\eps,\sigma,i})^2\dd x &\leq I^{\lam,n}(y^+_{\eps,\sigma,i}) + \frac \sigma{2N},\\
        \int_0^1 W(\eta_{\eps,\sigma,i})-\lam(\eta^{(n-1)}_{\eps,\sigma,i})^2+(\eta^{(n)}_{\eps,\sigma,i})^2\dd x &\leq J^{\lam,n}(y^-_{\eps,\sigma,i}) + \frac \sigma{2N},
    \end{split}
    \end{align}
    and set
    \begin{align}
        \hat{\zeta}_{\eps,\sigma,i}(x)\coloneq \zeta_{\eps,\sigma,i}\left(x-\frac{x_{\sigma,i}^+}{\eps}\right) \quad \text{and} \quad \hat{\eta}_{\eps,\sigma,i}(x)\coloneq \eta_{\eps,\sigma,i}\left(x-\frac{x_{\sigma,i}^-+1}{\eps}\right).
    \end{align}
    Thus, we define the favored competitor sequence of functions $v_{\eps,i} \in \AAA^n(\R)$ as  
    \begin{align}
        v_{\eps,\sigma,i}(x)\coloneq 
        \left\{\begin{array}{ll}
        -1 \quad & x \leq \frac{x_{\sigma,i}^-}{\eps} -1, \\
        \hat{\eta}_{\eps,\sigma,i} \quad & \frac{x_{\sigma,i}^-}{\eps}-1 \leq x \leq \frac{x_{\sigma,i}^-}{\eps}, \\
        u_\eps (\eps x) \quad & \frac{x_{\sigma,i}^-}{\eps}\leq x \leq \frac{x_{\sigma,i}^+}{\eps}, \\
        \hat{\zeta}_{\eps,\sigma,i} \quad & \frac{x_{\sigma,i}^+}{\eps} \leq x \leq \frac{x_{\sigma,i}^+}{\eps}+1, \\
        1 \quad & \frac{x_{\sigma,i}^+}{\eps} +1 \leq x.
        \end{array}\right.
    \end{align} 
    
    \vspace{-0.3cm}
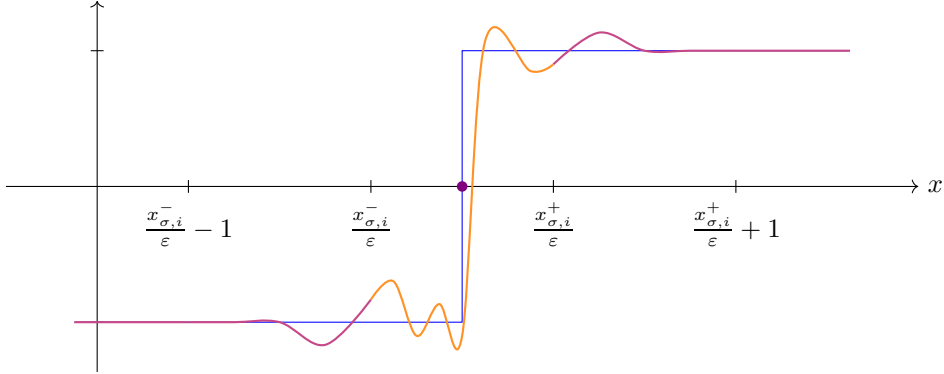
\begin{figure}[htb]
\centering   
\begin{tikzpicture}[scale=0.6]
\draw[->] (-10,0) -- (10,0) node[right] {$x$};
\draw[->] (-8,-4.1) -- (-8,4.1);% node[above] {\textcolor{blue}{$u(x)$}};
\draw[shift={(-8,3)}] (4pt,0pt) -- (-4pt,0pt) node[below=0.5em] {};
\draw[shift={(-8,-3)}] (4pt,0pt) -- (-4pt,0pt) node[below=0.5em] {};
\draw[-,blue] (-8.5,-3) -- (0,-3) -- (0,3) -- (8.5,3);

\node at (0,0) [circle,fill=violet,inner sep=1.4pt]{};

\draw[shift={(-2,0)}] (0pt,-4pt) -- (0pt,4pt) node[below=0.5em] {$\frac{x_{\sigma,i}^-}{\eps}$};
\draw[shift={(-6,0)}] (0pt,-4pt) -- (0pt,4pt) node[below=0.5em] {$\frac{x_{\sigma,i}^-}{\eps}-1$};

\draw[shift={(2,0)}] (0pt,-4pt) -- (0pt,4pt) node[below=0.5em] {$\frac{x_{\sigma,i}^+}{\eps}$};
\draw[shift={(6,0)}] (0pt,-4pt) -- (0pt,4pt) node[below=0.5em] {$\frac{x_{\sigma,i}^+}{\eps}+1$};

\draw[thick,-,magenta!80!black] (-8.5,-3) -- (-6,-3);
\draw[thick,-,magenta!80!black] (8.5,3) -- (6,3);

\draw[magenta!80!black,thick]  plot[smooth, tension=0.6] coordinates {(-6,-3) (-5,-3) (-4,-3) (-3,-3.5) (-2, -2.5)};
\draw[magenta!80!black,thick]  plot[smooth, tension=0.6] coordinates {(6,3) (5,3) (4,3) (3,3.4) (2, 2.7)};

\draw[orange!85!white,thick]  plot[smooth, tension=0.6] coordinates {(-2, -2.5) (-1.5, -2.1) (-1, -3.3) (-0.5, -2.6) (0, -3.3) (0.5, 3.2) (1.5, 2.55) (2, 2.7)};

\end{tikzpicture}
\caption{Away from the discontinuity, the sequence $u_\eps$ (orange) is modified (pink) into a favored competitor sequence $v_\eps$ such that its energy $G_\eps^{\lam,n}$ decreases.}
\end{figure}%
    \vspace{12pt}
        
    \noindent Using $W(\pm 1)=0$ and \eqref{abschI_lamJ_lam_n}, we have     
    \begin{align} \label{abschOptimalProf}
    \begin{split}
        C^{\lam,n}_W & \leq \int_{\frac{x_{\sigma,i}^-}{\eps} -1}^{\frac{x_{\sigma,i}^+}{\eps} +1} W(v_{\eps,\sigma,i})-\lam(v^{(n-1)}_{\eps,\sigma,i})^2+(v^{(n)}_{\eps,\sigma,i})^2\dd x\\
        & \leq \int_{x_{\sigma,i}^-}^{x_{\sigma,i}^+} \frac 1\eps W(u_\eps)-\lam \eps^{2n-3} (u^{(n-1)}_\eps)^2 + \eps^{2n-1}(u^{(n)}_\eps)^2\dd x + J^{\lam,n}(y^-_{\eps,\sigma,i}) + I^{\lam,n}(y^+_{\eps,\sigma,i}) + \frac \sigma N.
    \end{split}
    \end{align}
    Finally, we get
    \begin{align}
        \liminf_{\eps \to 0} G_\eps^{\lam,n}[u_\eps] & \overset{\hphantom{\eqref{abschOptimalProf}}}{\geq} \liminf_{\eps \to 0} \sum_{i=1}^N \int_{x_{\sigma,i}^-}^{x_{\sigma,i}^+} \frac 1\eps W(u_\eps)-\lam \eps^{2n-3} (u^{(n-1)}_\eps)^2 + \eps^{2n-1}(u^{(n)}_\eps)^2\dd x \\
        & \overset{\eqref{abschOptimalProf}}{\geq} N C_W^{\lam,n} - \limsup_{\eps \to 0} \sum_{i=1}^N \left( J^{\lam,n}(y^-_{\eps,\sigma,i}) + I^{\lam,n}(y^+_{\eps,\sigma,i}) + \frac \sigma N \right) = N C^{\lam,n}_W - \sigma,
    \end{align}
    where we have used $y_{\eps,\sigma,i}^\pm \to \pm e_1$ as $\eps \to 0$ and \eqref{IlamJlam_n}. The Liminf Inequality follows by letting $\sigma \to 0$.
    \end{proof}

 It remains to show that the asymptotic lower bound $G^{\lam,n}$ is optimal as described by the Limsup inequality. This is demonstrated through the existence of a recovery sequence, which we define explicitly in the following. The approach is a standard method in the context of singular perturbation models and is also used for $G^{\lam,2}_\eps$ and $G^{0,n}_\eps$ in \cite[Theorem~4.1]{CicSpaZep} and \cite[Proposition 5.3]{BruDonSol}, respectively.

\begin{prop}[Limsup inequality] \label{prop:limsup_inequality}
    Let $n \in \N_{\geq2}$ and $\lam \in (0,\lam_n)$. For every $u\in L^1(I)$ there exists a sequence $u_\eps \in L^1(I)$ such that $u_\eps \to u$ in $L^1(I)$ as $\eps \to 0$ and
    \begin{align}
        G^{\lam,n}[u] \geq \limsup_{\eps \to 0} G^{\lam,n}_\eps[u_\eps].
    \end{align}
\end{prop}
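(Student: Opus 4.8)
The plan is to argue by cases according to whether the target functional is finite. If $u \in L^1(I) \setminus BV(I,\{\pm 1\})$, then $G^{\lam,n}[u] = +\infty$ and the inequality is vacuous; one simply takes $u_\eps = u$ (no finiteness of the approximating energies is required for the limsup inequality). So I would reduce to the case $u \in BV(I,\{\pm 1\})$, with discontinuity set $S(u) = \{s_1 < \dots < s_N\} \subset I$ and $\delta_0 := \min_i (s_{i+1} - s_i) > 0$ (interpreting $\delta_0 = +\infty$ if $N \le 1$), so that $G^{\lam,n}[u] = N\, C_W^{\lam,n}$; by Proposition~\ref{constwohl} this is a finite nonnegative number.

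For this case I would build the recovery sequence by inserting rescaled near-optimal profiles at the jump points. Fix $j \in \N$. For each $i$ choose, using the definition of the infimum in \eqref{Gammaconst}, a profile $f_i^{(j)} \in \AAA^n(\R)$ with $f_i^{(j)} \equiv \pm 1$ outside a common interval $[-T_j, T_j]$, oriented (if necessary by replacing $f_i^{(j)}(\cdot)$ with $f_i^{(j)}(-\cdot)$, which leaves the energy unchanged since the integrand in \eqref{Gammaconst} is invariant under $x \mapsto -x$) so that it connects the left value $u(s_i^-)$ to the right value $u(s_i^+)$, and such that its energy exceeds $C_W^{\lam,n}$ by at most $\tfrac{1}{jN}$. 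For $\eps$ small enough that the intervals $[s_i - \eps T_j, s_i + \eps T_j]$ are pairwise disjoint and contained in $I$ — possible because $2\eps T_j < \delta_0$ and each $s_i$ is an interior point of the open set $I$ — define $u_\eps^{(j)}$ to equal $f_i^{(j)}\bigl((\cdot - s_i)/\eps\bigr)$ on each such interval and to equal $u$ (hence locally constant $\pm 1$) elsewhere. Since each $f_i^{(j)} \in W^{n,2}_{\mathrm{loc}}(\R)$ is eventually constant, it is $C^{n-1}$ with derivatives of orders $1,\dots,n-1$ vanishing at $\pm T_j$ and $L^2$ top derivative, so the patched function lies in $W^{n,2}(I)$.

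The energy is then computed by additivity of $G_\eps^{\lam,n}$ over the disjoint pieces together with the change of variables $y = (x - s_i)/\eps$: using $(u_\eps^{(j)})^{(k)} = \eps^{-k} (f_i^{(j)})^{(k)}$ and $\dd x = \eps\,\dd y$, the powers of $\eps$ in $\tfrac1\eps W$, in $\lam\eps^{2n-3}(\cdot)^2$ applied to the $(n-1)$-st derivative, and in $\eps^{2n-1}(\cdot)^2$ applied to the $n$-th derivative cancel exactly, giving $G_\eps^{\lam,n}[u_\eps^{(j)}] = \sum_{i=1}^N \int_\R W(f_i^{(j)}) - \lam\bigl((f_i^{(j)})^{(n-1)}\bigr)^2 + \bigl((f_i^{(j)})^{(n)}\bigr)^2 \dd y \le N C_W^{\lam,n} + \tfrac1j$, a bound independent of $\eps$. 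Moreover $\|u_\eps^{(j)} - u\|_{L^1(I)} \le \sum_i 2\eps T_j \,\bigl(\|f_i^{(j)}\|_{L^\infty} + 1\bigr) \to 0$ as $\eps \to 0$, so $u_\eps^{(j)} \to u$ in $L^1(I)$ and $\limsup_{\eps \to 0} G_\eps^{\lam,n}[u_\eps^{(j)}] \le G^{\lam,n}[u] + \tfrac1j$. A standard diagonal extraction over $j \to \infty$ (choosing a decreasing sequence $\tilde\eps_j$ with $\|u_\eps^{(j)} - u\|_{L^1(I)} \le \tfrac1j$ and $G_\eps^{\lam,n}[u_\eps^{(j)}] \le G^{\lam,n}[u] + \tfrac2j$ for $\eps \le \tilde\eps_j$, and setting $u_\eps := u_\eps^{(j)}$ for $\eps \in (\tilde\eps_{j+1}, \tilde\eps_j]$) then produces a single sequence $u_\eps \to u$ in $L^1(I)$ with $\limsup_{\eps \to 0} G_\eps^{\lam,n}[u_\eps] \le G^{\lam,n}[u]$, as claimed.

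There is no serious obstacle here: this is the classical ``recovery sequence by rescaled optimal profiles'' argument, and the scaling of the functional is precisely such that the competitor's energy is exactly the profile energy, independent of $\eps$. The only two points requiring care are that the optimal profile constant $C_W^{\lam,n}$ need not be attained — which is why one works with near-minimizers and diagonalizes in the approximation index $j$ — and that the glued competitor must genuinely belong to $W^{n,2}(I)$, which is guaranteed exactly by the definition of the admissible class $\AAA^n(\R)$, whose elements are constant outside a bounded set and therefore patch together with $\pm 1$ in a $C^{n-1}$ fashion with square-integrable $n$-th derivative.
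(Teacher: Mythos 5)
Your argument is correct and follows essentially the same route as the paper: both constructions glue rescaled near-optimal profiles (chosen from $\AAA^n(\R)$ with the needed reflection for descending jumps) at each point of $S(u)$ and use the invariance of the energy scaling under $x\mapsto x/\eps$ to reduce the limsup to $N\,C_W^{\lam,n}$ plus an arbitrarily small error. The only cosmetic differences are that you patch on $\eps$-shrinking neighborhoods and make the diagonal extraction in the approximation index explicit, whereas the paper patches on fixed half-intervals between consecutive jumps and sends $\delta\to 0$ in a final sentence; these are equivalent.
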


\begin{proof}
    We denote $I=(a,b)$ and consider $u \in BV(I, \{\pm 1\})$. In addition to the setting in the proof of Proposition~\ref{prop:liminf_inequality}, we set $s_0\coloneq a$ and $s_{N+1}\coloneq b$. For all $i \in \{1,\ldots,N\}$, we define the closed intervals $I_i\coloneq \left[\frac 12 (s_{i-1}+s_i),\frac 12 (s_i+s_{i+1})\right]$, and $\delta_0\coloneq \min\{s_{i+1}-s_i \colon 0\leq i\leq N\}$. Further we may assume that jumps of $u$ from $-1$ to $1$ occur in intervals with odd indices and jumps of $u$ from $1$ to $-1$ occur in intervals with even indices. 
    
    \begin{figure}[htb]
\centering
\begin{tikzpicture}[scale=0.9]
\draw[shift={(0,1.5)}] (2pt,0pt) -- (-2pt,0pt) node[below=0.5em] {};
\draw[shift={(0,-1.5)}] (2pt,0pt) -- (-2pt,0pt) node[below=0.5em] {};
\draw[->] (-7,0) -- (7,0) node[right] {$x$};
\draw[->] (0,-2.5) -- (0,2.5);% node[above] {\textcolor{blue}{$u(x)$}};
\draw[-,blue] (-6,-1.5) -- (-4,-1.5) -- (-4,1.5) -- (-1,1.5) -- (-1,-1.5) -- (2,-1.5) -- (2,1.5) -- (4,1.5) -- (4,-1.5) -- (6,-1.5);

\draw[violet] (-6,0) node[xshift=-1pt] {\textbf{\Large{$($}}};
\draw[violet] (-6,0) node[yshift=-1.9cm] {\textbf{$s_0$}};
\draw[violet] (-5,0) node[xshift=-0pt] {\textbf{$]$}};

\draw[violet] (-5,0) node[xshift=0pt] {\textbf{$[$}};
\node at (-4,0) [circle,fill=violet,inner sep=1.2pt]{};
\draw[violet] (-4,0) node[yshift=-1.9cm] {\textbf{$s_1$}};
\draw[violet] (-2.5,0) node[xshift=0pt] {\textbf{$]$}};

\draw[violet] (-2.5,0) node[xshift=0pt] {\textbf{$[$}};
\node at (-1,0) [circle,fill=violet,inner sep=1.2pt]{};
\draw[violet] (-1,0) node[yshift=-1.9cm] {\textbf{$s_2$}};
\draw[violet] (0.5,0) node[xshift=0pt] {\textbf{$]$}};

\draw[violet] (0.5,0) node[xshift=0pt] {\textbf{$[$}};
\node at (2,0) [circle,fill=violet,inner sep=1.2pt]{};
\draw[violet] (2,0) node[yshift=-1.9cm] {\textbf{$s_3$}};
\draw[violet] (3,0) node[xshift=0pt] {\textbf{$]$}};

\draw[violet] (3,0) node[xshift=0pt] {\textbf{$[$}};
\node at (4,0) [circle,fill=violet,inner sep=1.2pt]{};
\draw[violet] (4,0) node[yshift=-1.9cm] {\textbf{$s_4$}};
\draw[violet] (5,0) node[xshift=0pt] {\textbf{$]$}};

\draw[violet] (6,0) node[yshift=-1.9cm] {\textbf{$s_5$}};
\draw[violet] (6,0) node[xshift=-1pt] {\textbf{\Large{$)$}}};

\draw[thick,-,magenta!80!black] (-6,-1.5) -- (-5,-1.5);

\draw[magenta!80!black,thick]  plot[smooth, tension=0.5] coordinates {(-5,-1.5) (-4.8,-1.5) (-4.5,-1.45) (-4.2,-1.55) (-3.8,1.55) (-3.5,1.45) (-3.2,1.5) (-3,1.5)};

\draw[thick,-,magenta!80!black] (-3,1.5) -- (-2,1.5);

\draw[magenta!80!black,thick]  plot[smooth, tension=0.5] coordinates {(0,-1.5) (-0.2,-1.5) (-0.5,-1.45) (-0.8,-1.55) (-1.2,1.55) (-1.5,1.45) (-1.8,1.5) (-2,1.5)};

\draw[thick,-,magenta!80!black] (0,-1.5) -- (1,-1.5);

\draw[magenta!80!black,thick]  plot[smooth, tension=0.5] coordinates {(1,-1.5) (1.2,-1.5) (1.5,-1.45) (1.8,-1.55) (2.2,1.55) (2.5,1.45) (2.8,1.5) (3,1.5)};

\draw[magenta!80!black,thick]  plot[smooth, tension=0.5] coordinates {(5,-1.5) (4.8,-1.5) (4.5,-1.45) (4.2,-1.55) (3.8,1.55) (3.5,1.45) (3.2,1.5) (3,1.5)};

\draw[thick,-,magenta!80!black] (6,-1.5) -- (5,-1.5);
\end{tikzpicture}
\caption{As $\eps$ goes to zero, the recovery sequence $u_\eps$ (pink) converges to $u$ (blue).}
\end{figure}
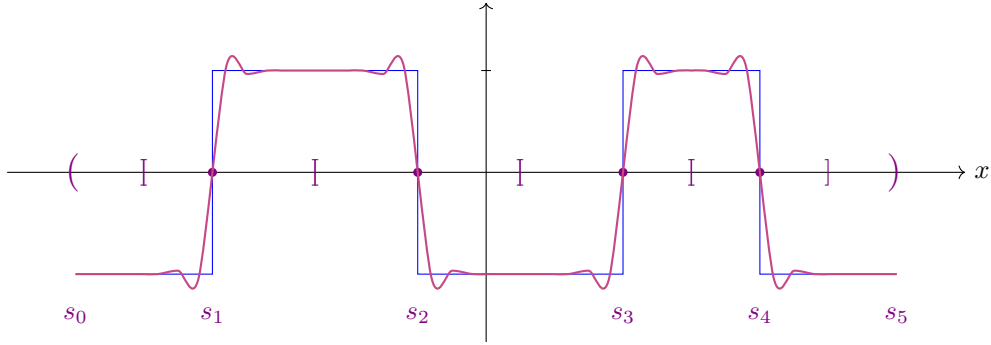%
    
    \noindent We then fix $\delta \in (0,\delta_0)$ and $f \in \AAA^n$ such that 
    \begin{align} \label{choosef_n}
        \int_\R W(f)-\lam(f^{(n-1)})^2+(f^{(n)})^2\dd x \leq C^{\lam,n}_W + \frac{\delta}{N}.
    \end{align}    
     Moreover, we choose $\eps_0>0$ sufficiently small such that we have $\frac{\delta}{2\eps}>T$ for all $\eps \in (0,\eps_0)$, and define the recovery sequence $u_\eps \in W^{n,2}(I)$ by
    \begin{align}
        u_\eps(x)\coloneq 
        \left\{\begin{array}{ll}
        f\left(\frac{x-s_i}{\eps}\right), \quad & x \in I_i \text{ and $i$ is odd}, \\
        f\left(-\frac{x-s_i}{\eps}\right), \quad & x \in I_i \text{ and $i$ is even}, \\
        u(x), \quad & \text{otherwise}.
        \end{array}\right.
    \end{align}
    Due to $f \in \AAA^n$, we hence have $u_\eps \to u$ in $L^1(I)$ as $\eps \to 0$, and
        \begin{align}
            \lim_{\eps \to 0} G_\eps^{\lam,n}[u_\eps] 
            & \overset{\hphantom{\eqref{choosef_n}}}{=} \lim_{\eps \to 0} \sum_{i=1}^N \int_{I_i} \frac{1}{\eps} W(u_\eps) -\lam\eps^{2n-3}(u^{(n-1)}_\eps)^2+\eps^{2n-1}(u^{(n)}_\eps)^2\dd x \\
            &\overset{\hphantom{\eqref{choosef_n}}}{=} \lim_{\eps \to 0}\left\{\sum_{i \text{ odd}} \int_{\frac{I_i}{\eps}} W(f(x-s_i))-\lam(f^{(n-1)}(x-s_i))^2+(f^{(n)}(x-s_i))^2\dd x \right.\\
            & \qquad \quad + \! \left.\sum_{i \text{ even}} \int_{\frac{I_i}{\eps}} W(f(s_i-x))-\lam(f^{(n-1)}(s_i-x))^2+(f^{(n)}(s_i-x))^2\dd x\right\}\\
            & \overset{\eqref{choosef_n}}{\leq} C_W^{\lam,n} N + \delta.
        \end{align}
        Since $\delta \in (0,\delta_0)$ was arbitrary, the Limsup inequality follows, concluding the proof.
\end{proof}

\appendix

\section{Appendix}

For the reader's convenience, we present a suitable formulation of the Gagliardo-Nirenberg inequality \cite{Gag,Nir59} on bounded intervals, since the proof is sparsely found in the literature. Along the way, a selected range of further interpolation inequalities is shown. The Gagliardo-Nirenberg inequality is used in the proof of the Liminf inequality in Section~\ref{Subsec:Gamman}, and the nonlinear interpolation inequality in Section~\ref{Subsec:NonLin} is based on it. We build upon the proof of Fiorenza et al. \cite{FioForRosSou}, with small adjustments and simplifications which were made in consultation with the authors. As there, we start with an interpolation lemma (see \cite[Lemma~3.2]{FioForRosSou}).

\begin{rem}[Absolute continuity] \label{abscont}
    For any measurable subset $U \subset \R$ and $1 \leq p < \infty$, claiming that the $n$-th derivative $u^{(n)}$ of a function $u$ is in $L^p(U)$ implies that $u^{(n-1)}$ is in $W^{1,1}_{\mathrm{loc}}(U)$, which is equivalent to $u^{(n-1)}$ being absolutely continuous on any compact subset of $U$.
\end{rem}

\begin{lemma} \label{IntLem}
Let $I \subset \R$ be an open bounded interval and $1 \leq p,q,r < \infty$. Then there exists $C=C(q)>1$ such that for all $u \in L^q(I)$ with $u'' \in L^{r}(I)$, we have
\begin{align} \label{eq:IntLem}
    \|u'\|_{L^p(I)} \leq C\left( |I|^{1+\frac 1p - \frac 1r}\|u''\|_{L^r(I)} + |I|^{-1+\frac 1p - \frac 1q}\|u\|_{L^q(I)}\right).
\end{align}
\end{lemma}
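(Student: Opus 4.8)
The plan is to prove the inequality first on a normalized interval and then recover the general case by scaling. So first I would fix the reference interval $I_0 = (0,1)$ and prove that there is $C = C(q) > 1$ with
\begin{align}
    \|u'\|_{L^p(I_0)} \leq C\bigl( \|u''\|_{L^r(I_0)} + \|u\|_{L^q(I_0)} \bigr)
\end{align}
for all $u$ with $u \in L^q(I_0)$ and $u'' \in L^r(I_0)$ (note $u'$ is then absolutely continuous on $I_0$ by Remark~\ref{abscont}, so all quantities make sense). The natural route is a mean-value / Taylor argument: for $x, y \in (0,1)$ one has $u'(x) = u'(y) + \int_y^x u''(t)\dd t$, hence $|u'(x)| \leq |u'(y)| + \|u''\|_{L^1(I_0)}$. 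Averaging in $y$ over a suitable subinterval and using that $u(1) - u(0) = \int_0^1 u'$, together with the mean value theorem to locate a point $y_*$ where $u'(y_*) = u(1) - u(0)$, gives a pointwise bound $|u'(x)| \leq \|u''\|_{L^1(I_0)} + |u(1)-u(0)|$. But $u(1)-u(0)$ is not directly controlled by $\|u\|_{L^q}$; instead I would average once more: pick $y_*$ with $u'(y_*) = \int_0^1 u'\dd s$ is not quite enough, so the cleaner device is to write, for any $y$, $u'(y) = \bigl(u(1)-u(0)\bigr) - \int \dots$ — more robustly, use the representation $u'(x) = \int_0^1 \bigl( u'(x) - u'(y)\bigr)\dd y + \int_0^1 u'(y)\dd y$ and bound the first integral by $\|u''\|_{L^1(I_0)}$ and the second by $|u(1)-u(0)| \leq \|u'\|_{L^1}$... which is circular. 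The honest fix is a two-scale averaging: bound $|u(1)-u(0)|$ by first noting $u(b')-u(a') = \int u'$ for \emph{any} $a',b'$, integrate this identity in $a'$ and $b'$ over $(0,1)$ to get $|u(1)-u(0)| \leq C\|u\|_{L^1(I_0)} + C\|u''\|_{L^1(I_0)}$ via another mean-value step on $u'$. Finally pass from $L^1$ to $L^p, L^q, L^r$ on the bounded interval $I_0$ by Hölder's inequality (since $|I_0| = 1$, all $L^s(I_0)$ norms are comparable in the direction needed, with constants depending only on the exponents).

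The main obstacle — and the reason the constant depends on $q$ — is getting the lower-order term to appear as $\|u\|_{L^q}$ rather than $\|u\|_{L^1}$ or $\|u'\|_{L^1}$; this is exactly where one must avoid circularity. The device I expect to use is: locate $y_* \in (0,1)$ with $|u(y_*)| \leq \|u\|_{L^1(I_0)} \leq \|u\|_{L^q(I_0)}$ (mean value for integrals / Chebyshev), and locate $z_* \in (0,1)$ with $|u'(z_*)| \leq \|u'\|_{L^1(I_0)}$; then instead bound $|u'(z_*)|$ indirectly — write $u(y_*) - u(z_*') = \int u'$ over a subinterval where $u'$ has controlled average. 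Concretely, the standard trick is: since $\int_0^1 u' = u(1)-u(0)$ is awkward, instead use that there exist points $\xi \in (0,1/2)$, $\eta \in (1/2,1)$ with $|u(\xi)|, |u(\eta)| \leq 2\|u\|_{L^1(I_0)}$ (again Chebyshev on each half), and then $u(\eta) - u(\xi) = \int_\xi^\eta u'$, while by the mean value theorem $\int_\xi^\eta u' = (\eta-\xi) u'(\theta)$ for some $\theta$, giving $|u'(\theta)| \leq \frac{1}{\eta-\xi}\bigl(|u(\eta)|+|u(\xi)|\bigr) \leq C\|u\|_{L^1(I_0)}$. Feeding this $\theta$ into the fundamental theorem of calculus bound $|u'(x)| \leq |u'(\theta)| + \|u''\|_{L^1(I_0)}$ for all $x$ yields $\|u'\|_{L^\infty(I_0)} \leq C\bigl(\|u\|_{L^1(I_0)} + \|u''\|_{L^1(I_0)}\bigr)$, and then $\|u'\|_{L^p(I_0)} \leq \|u'\|_{L^\infty(I_0)}$, $\|u\|_{L^1(I_0)} \leq \|u\|_{L^q(I_0)}$, $\|u''\|_{L^1(I_0)} \leq \|u''\|_{L^r(I_0)}$ finish the normalized estimate; the dependence of $C$ on $q$ enters only through these last Hölder steps, so actually $C$ can be taken independent of $q$ here, but I would keep the stated $C=C(q)$ for safety.

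For the scaling step, given a general open bounded interval $I$ of length $\ell = |I|$, I would set $v(t) \coloneq u(a + \ell t)$ for $t \in (0,1)$ where $I = (a, a+\ell)$, so $v'(t) = \ell u'(a+\ell t)$ and $v''(t) = \ell^2 u''(a+\ell t)$, and compute $\|v'\|_{L^p(I_0)} = \ell^{1 - 1/p}\|u'\|_{L^p(I)}$, $\|v''\|_{L^r(I_0)} = \ell^{2 - 1/r}\|u''\|_{L^r(I)}$, $\|v\|_{L^q(I_0)} = \ell^{-1/q}\|u\|_{L^q(I)}$. Plugging these into the normalized inequality for $v$ and dividing through by $\ell^{1-1/p}$ produces exactly the exponents $1 + \frac1p - \frac1r$ on $\|u''\|_{L^r(I)}$ and $-1 + \frac1p - \frac1q$ on $\|u\|_{L^q(I)}$ asserted in \eqref{eq:IntLem}, with the same constant $C$. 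This last bookkeeping is routine; the only genuine content is the normalized $L^\infty$ bound on $u'$, and within that, the circularity-avoiding argument for the lower-order term as described above.
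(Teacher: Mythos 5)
Your route is genuinely different from the paper's, and in outline it works, but one step as written fails and needs repair. The paper's proof normalizes to $I=(-\tfrac12,\tfrac12)$, reduces to $\int_I u=0$, and subtracts the affine correction $g(x)=\xi x$ with $\xi=\int_I u'$; this makes both $\tilde u=u-g$ and $\tilde u'$ have mean zero, after which $\|\tilde u'\|_{L^p}$ and $\|\tilde u\|_{L^q}$ are bounded by $\|u''\|_{L^r}$ via the fundamental theorem of calculus and Jensen, entirely at the level of integral norms. You instead aim for a pointwise $L^\infty$ bound on $u'$: locate a point $\theta$ where $|u'(\theta)|\lesssim\|u\|_{L^1}$ by Chebyshev plus the mean value theorem for integrals, then propagate by $u'(x)=u'(\theta)+\int_\theta^x u''$. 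Both work; yours is a bit more elementary, while the paper's affine-projection structure is the one that generalizes cleanly to higher order and higher dimension. Your scaling step is correct, and your observation that one could take $C$ independent of $q$ is also correct (the paper's explicit $C=8(q+1)^{1/q}$ is itself bounded on $q\ge 1$).

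The gap: you pick $\xi\in(0,\tfrac12)$ and $\eta\in(\tfrac12,1)$ by Chebyshev on each half and then bound $|u'(\theta)|\le\frac{1}{\eta-\xi}\bigl(|u(\eta)|+|u(\xi)|\bigr)\le C\|u\|_{L^1(I_0)}$. But Chebyshev controls only the values $|u(\xi)|,|u(\eta)|$, not the positions of $\xi,\eta$: both can lie arbitrarily close to $\tfrac12$, so $\eta-\xi$ has no positive lower bound and the factor $1/(\eta-\xi)$ is uncontrolled, so the last inequality does not hold with a universal constant. This is easily repaired by separating the averaging regions: since the average of $|u|$ over $(0,\tfrac13)$ is $\le 3\|u\|_{L^1(I_0)}$, there is $\xi\in(0,\tfrac13)$ with $|u(\xi)|\le 3\|u\|_{L^1(I_0)}$, and likewise $\eta\in(\tfrac23,1)$ with $|u(\eta)|\le 3\|u\|_{L^1(I_0)}$; then $\eta-\xi\ge\tfrac13$ and $|u'(\theta)|\le 18\|u\|_{L^1(I_0)}$, after which the rest of your argument (FTC to get the $L^\infty$ bound, H\"older to pass to $L^p,L^q,L^r$ on the unit interval, and the rescaling) goes through as you wrote it.
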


From the proof, we find that the constant is given by $C=4 \left( \int_{(-\frac 12,\frac 12)} |x|^q\dd x \right)^{-\frac1q}=8(q+1)^{\frac 1q}$.

\begin{proof}
We restrict the proof to intervals of the form $I=(-\frac {|I|}{2}, \frac{|I|}{2})$ given that \eqref{eq:IntLem} is invariant under translations. First, let $I=(-\frac 12 , \frac 12)$. Without loss of generality, we assume $\int_I u(x) \dd x=0$. We set $\xi \coloneq  \int_I u'(x)\dd x$ and $\tilu(x) \coloneq  u(x) - g(x)$ where $g(x) \coloneq  \xi x$, and note
\begin{align}
    \int_I \tilu(y)\dd y & = -\xi\int_I y\dd y = 0, \label{tilu=0} \\
    \int_I \tilu'(y)\dd y & = \xi - \int_I \xi\dd y = 0. \label{abltilu=0}
\end{align}
From \eqref{abltilu=0} and the Fundamental Theorem of Calculus, using $\tilu''=u''$, we obtain
\begin{align} \label{abschabltilu}
\begin{split}
    \|\tilu'\|_{L^p(I)}^p & \overset{\eqref{abltilu=0}}{=}  \int_I \left| \int_I \tilu'(x) -  \tilu'(y)\dd y \right|^p \dd x
    =  \int_I \left| \int_I \int_y^x u''(z)\dd z \dd y \right|^p \dd x \\
    & \leq \int_I \left| \int_I |u''(z)| \dd z \right|^p \dd x
    = \|u''\|_{L^1(I)}^p \leq \|u''\|_{L^r(I)}^p,
\end{split}
\end{align}
and by similar arguments,
\begin{align} \label{abschtilu}
     \|\tilu\|_{L^q(I)}^q & \overset{\eqref{tilu=0}}{=}  \int_I \left| \int_I \tilu(x) -  \tilu(y)\dd y \right|^q \dd x
     \leq \|\tilu'\|_{L^1(I)}^q 
     \leq \|\tilu'\|_{L^p(I)}^q \overset{\eqref{abschabltilu}}{\leq} \|u''\|_{L^r(I)}^q.
\end{align}
Due to $u'= \tilu' +\xi$, we then have
\begin{align}
    \|u'\|_{L^p(I)} & \overset{\hphantom{\eqref{abschabltilu}}}{\leq} \|\tilu'\|_{L^p(I)} + |\xi|
    = \|\tilu'\|_{L^p(I)} + \frac{\|g\|_{L^q(I)}}{\|\text{id}\|_{L^q(I)}} \\
    & \overset{\hphantom{\eqref{abschabltilu}}}{\leq} \|\text{id}\|_{L^q(I)}^{-1} \left( \|\tilu'\|_{L^p(I)} + \|\tilu\|_{L^q(I)} + \|u\|_{L^q(I)} \right) \\
    & \overset{\eqref{abschabltilu}}{\leq} \|\text{id}\|_{L^q(I)}^{-1} \left( \|u''\|_{L^r(I)} + \|\tilu\|_{L^q(I)} + \|u\|_{L^q(I)} \right) \\
    & \overset{\eqref{abschtilu}}{\leq} 2 \|\text{id}\|_{L^q(I)}^{-1} \left( \|u''\|_{L^r(I)} + \|u\|_{L^q(I)} \right),
\end{align}
which verifies the claim for $I=(-\frac 12 , \frac 12)$. For general $I=(-\frac {|I|}{2}, \frac{|I|}{2})$ and $u \in L^q(I)$ with $u'' \in L^{r}(I)$, we define $v \coloneq  u({|I|} \cdot)$ and observe that $v \in L^q((-\frac 12 , \frac 12))$ and $v'' \in L^{r}((-\frac 12 , \frac 12))$. By applying the previously shown statement to $v$, we obtain
\begin{align} 
    \|u'\|_{L^p(I)} & = |I|^{\frac 1p -1} \| v' \|_{L^p((-\frac 12 , \frac 12))}
    \leq C |I|^{\frac 1p -1} \left( \| v'' \|_{L^r((-\frac 12 , \frac 12))} + \| v \|_{L^q((-\frac 12 , \frac 12))} \right) \\
    & = C \left(|I|^{1+ \frac 1p - \frac 1r} \|u''\|_{L^r(I)} + |I|^{ -1 + \frac 1p -\frac 1q} \|u\|_{L^q(I)} \right),
\end{align}
concluding the proof.
\end{proof}

With Lemma~\ref{IntLem}, we are now in a position to show the Gagliardo-Nirenberg inequality on the real line (see also \cite[Theorem~1.2]{FioForRosSou}).

\begin{theorem}[Gagliardo-Nirenberg inequality on the real line] \label{GagNir_R}
Let $1 \leq p,q,r < \infty$,
and $j,m \in \N_0$ where $j<m$ such that
\begin{align} \label{bedGN_R}
    \frac 1p = j + \theta \left( \frac1r - m \right) + \frac{1-\theta}{q} \quad \text{and} \quad \frac jm \leq \theta < 1.
\end{align}
Then there exists $C=C(q,r,m,j,\theta)>0$ such that for all $u \in L^q(\R)$ with $u^{(m)} \in L^r(\R)$, we have
    \begin{align} \label{eq:GagNirR}
    \|u^{(j)}\|_{L^p(\R)} \leq C \|u^{(m)}\|^{\theta}_{L^r(\R)} \|u\|^{1-\theta}_{L^q(\R)}.
\end{align}
\end{theorem}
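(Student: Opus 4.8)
The plan is to derive Theorem~\ref{GagNir_R} from the second-order interpolation estimate of Lemma~\ref{IntLem} in two steps: first an estimate on a generic bounded interval carrying the correct homogeneity in $|I|$ for all orders $0\le j<m$, and then a passage to the whole line by covering $\R$ and optimizing the length scale.

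\emph{Step 1 (bounded interval).} I would establish that for every bounded interval $I$ and all parameters $(p,q,r,j,m,\theta)$ satisfying \eqref{bedGN_R} there is a constant $C$, independent of $I$, with
\begin{align*}
    \|u^{(j)}\|_{L^p(I)} \le C\Bigl( |I|^{\,m-j+\frac1p-\frac1r}\,\|u^{(m)}\|_{L^r(I)} + |I|^{\,-j+\frac1p-\frac1q}\,\|u\|_{L^q(I)}\Bigr).
\end{align*}
The powers of $|I|$ are forced by the scaling $x\mapsto|I|x$, so the only content is to produce a bound of this shape. I would proceed by induction on $m$: the case $m=2$, $j=1$ is exactly Lemma~\ref{IntLem}, the case $m=2$, $j=0$ follows by combining it with an elementary Poincar\'e inequality, and the inductive step iterates Lemma~\ref{IntLem} on subdivisions of $I$, absorbing the intermediate-order contributions into the left-hand side via an $\varepsilon$-version of the inequality. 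This is precisely the multi-scale bookkeeping already carried out for $p=q=r=2$ in the proof of Lemma~\ref{Lemma:NirIneq_n}, and I would treat it as routine.

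\emph{Step 2 (passage to $\R$).} I would cover $\R$ by consecutive intervals $I_k$ of a common length $\ell>0$, apply Step~1 on each, raise to the power $p$ and sum, obtaining
\begin{align*}
    \|u^{(j)}\|_{L^p(\R)}^p \le C\,\ell^{(m-j+\frac1p-\frac1r)p}\sum_k\|u^{(m)}\|_{L^r(I_k)}^p + C\,\ell^{(-j+\frac1p-\frac1q)p}\sum_k\|u\|_{L^q(I_k)}^p .
\end{align*}
When $p\ge r$ one has $\sum_k\|u^{(m)}\|_{L^r(I_k)}^p=\sum_k(\|u^{(m)}\|_{L^r(I_k)}^r)^{p/r}\le\bigl(\sum_k\|u^{(m)}\|_{L^r(I_k)}^r\bigr)^{p/r}=\|u^{(m)}\|_{L^r(\R)}^p$, and likewise for the second sum when $p\ge q$; optimizing $\ell$ to balance the two terms then yields \eqref{eq:GagNirR}, and a short computation using \eqref{bedGN_R} shows that the optimal $\ell$ reproduces exactly the weights $\theta$ and $1-\theta$. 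The admissibility condition guarantees that $p$ cannot be smaller than both $r$ and $q$ (that would force $1/p>\theta/r+(1-\theta)/q$, contradicting \eqref{bedGN_R}), so at least one of the two sums always collapses in this elementary way.

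The hard part is the remaining sum in the regime $p<r$, which is exactly the case required in the application to the Liminf inequality (Proposition~\ref{prop:liminf_inequality}), where $p=\tfrac{2n}{2n-k}<2=r$: there $t\mapsto t^{p/r}$ is concave, the naive bound $\sum_k\|u^{(m)}\|_{L^r(I_k)}^p\le\|u^{(m)}\|_{L^r(\R)}^p$ fails, and summing over a fixed partition is too lossy. Following Fiorenza et al.~\cite{FioForRosSou}, I would resolve this with a more delicate covering/averaging argument in which the partition is adapted to the function (or one integrates the interval estimate against a family of translated windows), so that the $u^{(m)}$ and $u$ contributions remain coupled and the concavity loss is avoided. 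Turning this into a quantitative estimate, and checking that the homogeneities still recombine into the exponent $\theta$ of \eqref{eq:GagNirR}, is where I expect the real effort to lie; by contrast, the reduction to bounded intervals and the final optimization over the length scale are essentially mechanical.
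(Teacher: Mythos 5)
Your high-level plan --- a rescaled interval estimate followed by an adapted covering of $\R$ --- is the right circle of ideas, and you correctly identify $p<r$ as the regime where summing over a uniform partition collapses (and that this is exactly the case needed in Proposition~\ref{prop:liminf_inequality}). But the proposal stops precisely where you say ``the real effort'' lies, and that is essentially the whole proof. The adapted covering of the paper's Steps~1--2 is not a citation: one must (i) for each $x\in E_k=\{|u|\ge 1/k\}$ show that $\{h:\psi_x(h)\le\phi_x(h)\}$ is nonempty (using continuity of $u$, that $|u|\ge\tfrac1{2k}$ on a small window around $x$, and that the $u^{(m)}$-term decays as $h\to 0$) and bounded above (the power of $h$ in $\psi_x$ is strictly positive while $\phi_x$ stays bounded), producing a balance point $h_x$; (ii) refine a finite subcover of $E_k$ to one with overlap at most $2$; (iii) raise the interval estimate to the $p$-th power, use the balance $\phi_x=\psi_x$ to write each term as a product, and apply H\"older for sums with conjugate exponents $\tfrac{q+r}{q},\tfrac{q+r}{r}$ so the sums over the cover recombine into global norms; and (iv) argue that $\{u'\neq 0,\ u=0\}$ is a null set so that $\|u'\|_{L^p(E_k)}\nearrow\|u'\|_{L^p(\R)}$. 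Deferring all of this to Fiorenza et al.\ leaves a genuine gap, since this is exactly the mechanism that defeats the concavity loss you correctly diagnose.

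There is also a structural mismatch with the paper worth knowing about. You propose to first prove a high-order interval estimate for all $0\le j<m$ and general $p,q,r$, and then run the adapted covering at that generality. The paper instead performs the covering only in the base case $j=1$, $m=2$ --- where the exponent $-1+\tfrac1p-\tfrac1q$ in $\phi_x$ is transparently negative and only $u'$ and monotonicity enter the measure-zero argument in (iv) --- and then reaches general $(j,m)$ by induction on the orders carried out directly on the whole-line inequality \eqref{eq:GagNirR} (Step~3 of the paper's proof). Your route is not unworkable, but it forces a re-derivation of the balancing construction for every $(j,m,p,q,r)$, including $j=0$ where the sign of the exponent $-j+\tfrac1p-\tfrac1q$ in $\phi_x$ is no longer automatic, and it presupposes a high-order interval version of Lemma~\ref{IntLem} for general Lebesgue exponents that Lemma~\ref{Lemma:NirIneq_n} (stated and proved only for $p=q=r=2$) does not supply; the subdivide-and-absorb bookkeeping with inhomogeneous exponents is not the ``routine'' step you take it to be.
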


\begin{proof}
Representatively, we show \eqref{eq:GagNirR} for the case $\theta = \frac jm$, which is also the choice of $\theta$ in the proof of Proposition~\ref{prop:liminf_inequality}. The general case follows through interpolation (see \cite{MolRosSou} for details). 
The strategy consists of first proving the statement for $j=1$ and $m=2$ using Lemma~\ref{IntLem} and a covering argument, followed by an induction on $m$ and $j$.

\textit{Step 1: Covering.} Let $u \in L^q(\R)$ and $u'' \in L^r(\R)$. In particular, this implies $u \in W^{2,1}_{\mathrm{loc}}(\R)$, and therefore we may consider the continuous representatives of $u$ and $u'$. For $k \in \N$, we set $E_k \coloneq \{x \in [-k,k]: |u(x)| \geq \frac 1k \}$, and denote by $\chi_A$ the indicator function of a set $A \subset \R$. We aim to show that there exists $k_0 \in \N$ such that for every $k > k_0$ there are finitely many open bounded intervals $I_{k,1},\ldots,I_{k,N}$ fulfilling 
\begin{enumerate}
    \item[\eni] $E_k \subset \bigcup_{i=1}^N I_{k,i}$,
    \item[\enii] $|I_{k,i}|^{-1+\frac 1p - \frac 1q} \|u\|_{L^q(I_{k,i})} = |I_{k,i}|^{1+\frac 1p - \frac 1r} \|u''\|_{L^r(I_{k,i})}$ for all $i \in \{1,\ldots,N\}$,
    \item[\eniii] $\sum_{i=1}^N \chi_{I_{k,i}} \leq 2$.
\end{enumerate}

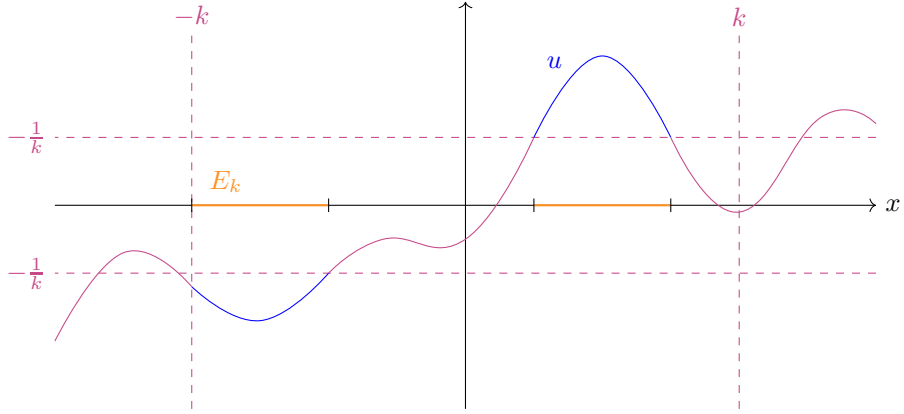
\begin{figure}[htb]
\centering
\begin{tikzpicture}[scale=0.9]

 %Achsen
   \draw[->] (-6,0) -- (6,0) node[right] {$x$};
   \draw[->] (0,-3) -- (0,3); %node[above] {\textcolor{blue}{$u(x)$}};
   \node[orange!85!white]  at (1.3,2.1) {\textcolor{blue}{$u$}};

%m und 1/m
   \draw[-,magenta!80!black,dashed] (6,-1) -- (-6,-1) node[left] {\textcolor{magenta!80!black}{$- \frac 1k$}};
   \draw[-,magenta!80!black,dashed] (6,1) -- (-6,1) node[left] {\textcolor{magenta!80!black}{$- \frac 1k$}};;
  \draw[-,magenta!80!black,dashed] (-4,-3) -- (-4,2.5)node[above] {\textcolor{magenta!80!black}{$-k$}};
   \draw[-,magenta!80!black,dashed] (4,-3) -- (4,2.5)node[above] {\textcolor{magenta!80!black}{$k$}};

%Funktion
\draw[magenta!80!black]  plot[smooth, tension=0.8] coordinates {(-6,-2) (-5,-0.7) (-4,-1.2)};
\draw[blue]  plot[smooth, tension=0.8] coordinates {(-4,-1.2) (-3,-1.7) (-2,-1)};
\draw[magenta!80!black]  plot[smooth, tension=0.8] coordinates {(-2,-1) (-1.2,-0.5) (0,-0.5) (1,1)};
\draw[blue]  plot[smooth, tension=0.8] coordinates {(1,1) (2,2.2) (3,1)};
\draw[magenta!80!black]  plot[smooth, tension=0.8] coordinates {(3,1) (4,-0.1) (5.2,1.3) (6,1.2)};

%E_m
 \draw[-,orange!85!white,thick] (-4,0) -- (-2,0);
 \draw[-,orange!85!white,thick] (1,0) -- (3,0);
  \draw (-4,-.1) -- (-4,.1);
 \draw (-2,-.1) -- (-2,.1);
 \draw (1,-.1) -- (1,.1);
 \draw (3,-.1) -- (3,.1);
 \node[orange!85!white]  at (-3.5,0.35) {$E_k$};

\end{tikzpicture} 
\caption{The aim is to cover the set $E_k$ by open bounded intervals. This enables the application of Lemma~\ref{IntLem}.}
\end{figure}

\noindent Without loss of generality we assume $u \not\equiv 0$. For fixed $k \in \N$ such that $E_k$ is non-empty we fix $x \in E_k$, and define $\phi_x,\psi_x : (0, \infty) \to \R$ by
\begin{align} \label{defphipsi}
    \phi_x(h)\coloneq h^{-1+\frac 1p - \frac 1q} \|u\|_{L^q((x-\frac h2, x+\frac h2))} \quad \text{and} \quad 
    \psi_x(h)\coloneq h^{1+\frac 1p - \frac 1r} \|u''\|_{L^r((x-\frac h2, x+\frac h2))}.
\end{align}
We note that both functions are strictly positive. Since $u$ is absolutely continuous on $E_k$, for every $\eps \in (0, \frac 12)$ there exists $h > 0$ such that for all $t \in (x- \frac h2, x + \frac h2)$ we have
\begin{align}
    |u(x)-u(t)| < \frac{\eps}{2k}.
\end{align}
Furthermore, since $|u(x)| \geq \frac 1k$, we can choose $h>0$ sufficiently small to ensure $|u(t)| \geq \frac{1}{2k}$ for all $t \in (x- \frac h2, x + \frac h2)$, and thus
\begin{align} \label{hklein1}
    |u(x)-u(t)| < \eps |u(t)|.
\end{align}
Additionally, we choose $h$ to be sufficiently small such that
\begin{align} \label{hklein2}
    \|u''\|_{L^r(\R)} < h^{-2+\frac 1r} \frac{1}{3k} \leq h^{-2+\frac 1r} \frac{|u(x)|}{3}.
\end{align}
Hence, for such an $h > 0$ we have
\begin{align}
    \psi_x(h) & \overset{\hphantom{\eqref{hklein1}}}{\leq} h^{1+ \frac 1p - \frac 1r} \|u''\|_{L^r(\R)} 
    \overset{\eqref{hklein2}}{\leq} h^{-1 + \frac 1p} \frac{|u(x)|}{3} = h^{\frac 1p - \frac 1q -1} \left( \int_{x-\frac h2}^{x+ \frac h2} \left(\frac{|u(x)|}{3}\right)^q\dd t\right)^{\frac 1q} \\
    & \overset{\eqref{hklein1}}{\leq} h^{\frac 1p - \frac 1q -1} \left( \int_{x-\frac h2}^{x+ \frac h2} \left(\frac{|u(t)|(1+\eps)}{3}\right)^q\dd t\right)^{\frac 1q} 
    \! \leq h^{\frac 1p - \frac 1q -1} \left( \int_{x-\frac h2}^{x+ \frac h2} \left(\frac{|u(t)|}{2}\right)^q\dd t\right)^{\frac 1q} 
    \! \leq \phi_x(h).
\end{align}
Accordingly, it follows that the set ${\{h>0: \psi_x(h) \leq \phi_x(h)\}}$ is non-empty. The set is also bounded, as there exists $h_x > 0$ such that $\psi_x(h_x)=\phi_x(h_x)$ and $\psi_x(h) > \phi_x(h)$ for $h>h_x$. The latter is evident from the examination of the exponents in \eqref{defphipsi}. More precisely, since $\frac 1p - \frac 1q -1< 0 < 1 + \frac 1p - \frac 1r$ and $u \in L^q(\R)$, the expression $\phi_x(h)$ is bounded, whereas $\psi_x(h)$ approaches infinity as $h \to \infty$. Now, consider the open covering $\bigcup_{x \in E_k} \left(x-\frac {h_x}2, x+\frac {h_x}2\right)$ of $E_k$. Since $E_k$ is compact, we can choose a finite subcovering 
\begin{align}
    \bigcup_{i=1}^{\tilde{N}} I_{k,i} \quad \text{where} \quad I_{k,i} \coloneq \left(x_i-\frac {h_i}2,x_i + \frac{h_i}2\right).
\end{align} 
The previous considerations ensure the validity of \enii. Lastly, we select a subsystem such that \eniii\ is also satisfied. If there exists $x \in \bigcap_{j=1}^M I_{k,i_j}$ with $M \geq 3$ and $i_j \in \{1,\ldots,\tilde{N}\}$ pairwise different, we define
\begin{align}
    a_x \coloneq  \min_{1 \leq i \leq \tilde{N}} \{\inf(I_{k,i}): x \in I_{k,i}\} \quad \text{and} \quad b_x\coloneq  \max_{1 \leq i \leq \tilde{N}}\{\sup(I_{k,i}): x \in I_{k,i}\}
\end{align}
and omit all intervals $I_{k,i_j}$ from the covering except for one of the form $(a_x,s)$ with $s \in \R$ and one of the form $(t,b_x)$ with $t \in \R$. By repeating this process for all $x$ that lie in more than two intervals of the covering, we obtain, after a finite number of steps, the desired covering $I_{k,1},\ldots,I_{k,N}$ satisfying \eni--\eniii.

\textit{Step 2: Extension to $\R$.} Now, we are in a position to show \eqref{eq:GagNirR} for $j=1$ and $m=2$ using Lemma~\ref{IntLem}. In this case, condition \eqref{bedGN_R} translates to
\begin{align} \label{bed12GN}
   \frac 2p = \frac 1r + \frac 1q
\end{align}
for $1 \leq p,q,r < \infty$, and we have $\theta=1-\theta=\frac12$. Exploiting the properties of the covering from \textit{Step 1} and applying Lemma~\ref{IntLem}, which is applicable to open bounded intervals, we get
\begin{align}
  \|u'\|_{L^p(E_k)}^p & \overset{\ \,\eni\,\ }{\leq} \sum_{i=1}^N \|u'\|_{L^p(I_{k,i})}^p 
  \leq C \sum_{i=1}^N \left( |I_{k,i}|^{p+1 - \frac pr}\|u''\|_{L^r(I_{k,i})}^p + |I_{k,i}|^{-p+ 1 - \frac pq}\|u\|_{L^q(I_{k,i})}^p\right) \\
  & \overset{\ \enii\ }{=} 2 C \sum_{i=1}^N |I_{k,i}|^{p+1 - \frac pr}\|u''\|_{L^r(I_{k,i})}^{\frac p2} \left( |I_{k,i}|^{-2 -\frac 1q + \frac 1r}\|u\|_{L^q(I_{k,i})} \right)^{\frac p2} \\
  & \overset{\eqref{bed12GN}}{=} 2 C \sum_{i=1}^N \left( \|u''\|_{L^r(I_{k,i})}^r \right)^{\frac{q}{q+r}} \left( \|u\|_{L^q(I_{k,i})}^q \right)^{\frac{r}{q+r}}.
\end{align}
With that, using Hölder's inequality for sums, we further estimate
\begin{align}
  \|u'\|_{L^p(E_k)}^p
  & \overset{\hphantom{\eniii}}{\leq} 2 C \left( \sum_{i=1}^N \|u''\|_{L^r(I_{k,i})}^r \right)^{\frac{q}{q+r}} \left(\sum_{i=1}^N \|u\|_{L^q(I_{k,i})}^q \right)^{\frac{r}{q+r}} \\
  & \overset{\eniii}{\leq} 4 C \|u''\|_{L^r(\R)}^{\frac{qr}{q+r}} \|u\|_{L^q(\R)}^{\frac{qr}{q+r}}
  \overset{\eqref{bed12GN}}{=} 4 C \|u''\|_{L^r(\R)}^{\frac p2} \|u\|_{L^q(\R)}^{\frac p2}.
\end{align}
 Due to the continuity of $u'$, the set $\{x \in \R: u'(x) \neq 0\}$ is open and thus can be represented as a disjoint union of countably many open intervals $\{J_k\}_{k \in \N}$ (see \cite[Chapter~1, Proposition~9]{Roy}). We note that $u$ is strictly monotone on each of these intervals. Consequently, we have that $J_k \cap \{x \in \R: u(x)=0\}$ consists of at most one point for every $k \in \N$ and conclude that 
\begin{align}
    \{x \in \R: u'(x)\neq0 \text{ and } u(x)=0 \} = \bigcup_{k \in \N} (J_k \cap \{x \in \R: u(x)=0\})
\end{align} 
is countable, i.e. has Lebesgue measure zero. Accordingly, $u'=u' \chi_{\{u\neq0\}}$ holds almost everywhere in $\R$, and since $|u' \chi_{E_k}| \nearrow |u' \chi_{\{u\neq0\}}|$ as $k \to \infty$, the Monotone Convergence Theorem yields $\|u'\|_{L^p(E_k)} \nearrow \|u'\|_{L^p(\R)}$. In conclusion, we have
\begin{align} \label{eq:GagNir12}
    \|u'\|^2_{L^p(\R)} = \sup_{k \in \N} \|u'\|^2_{L^p(E_k)} \leq C \|u''\|_{L^r(\R)} \|u\|_{L^q(\R)},
\end{align}
which corresponds to \eqref{eq:GagNirR} for $j=1$ and $m=2$.

\textit{Step 3: Induction on $m$ and $j$.} 
Finally, we prove \eqref{eq:GagNirR} for $\theta=\frac jm$ by induction on the orders. Initially, we consider $j=1$ and perform an induction on $m$ where the case $m=2$ is covered by \textit{Step 2}. We assume the statement holds for $j=1$ and a fixed $m \in \N_{\geq2}$. Now, let \eqref{bedGN_R} be fulfilled for $m+1$, i.e.
\begin{align} \label{bedm+1GN}
    \frac 1p = \frac 1{(m+1)r} + \frac m{(m+1)q}.
\end{align}
Additionally, we define the exponent $\tilp\coloneq  \frac{pq}{2q-p}$ such that
\begin{align} \label{bedtilp}
    \frac 2p = \frac 1\tilp + \frac 1q.
\end{align}
Comparing this with \eqref{bed12GN}, we notice that we can apply \eqref{eq:GagNir12} and subsequently use the induction hypothesis for $u'$, such that
\begin{align}
    \|u'\|^2_{L^p(\R)} & \leq C \|u''\|_{L^{\tilp}(\R)} \|u\|_{L^q(\R)} \leq C \|u^{(m+1)}\|_{L^r(\R)}^{\frac 1m} \|u'\|_{L^p(\R)}^{1-\frac 1m} \|u\|_{L^q(\R)}.
\end{align}
Here, the induction hypothesis is applicable since
\begin{align}
    \frac 1\tilp \overset{\eqref{bedtilp}}{=} \frac 2p - \frac 1q 
    \overset{\eqref{bedm+1GN}}{=} \frac{m-1}{mp} + \frac 1{mr}.
\end{align}
By rearranging, we obtain
\begin{align}
    \|u'\|_{L^p(\R)} \leq C \|u^{(m+1)}\|_{L^r(\R)}^{\frac 1{m+1}} \|u\|_{L^q(\R)}^{\frac m{m+1}},
\end{align}
which corresponds to the statement for $m+1$. 

Now, we perform another induction to prove \eqref{eq:GagNirR}. Initially, we suppose the statement holds for fixed arbitrary $j,m \in \N$ with $j<m$ and aim to deduce the statement for $j+1$ and $m+1$. Accordingly, we assume
\begin{align} \label{bedj+1GN}
    \frac 1p = \frac{j+1}{(m+1)r} + \frac{m-j}{(m+1)q},
\end{align}
and define the exponent $\tilq\coloneq  \frac{pr(m-j)}{mr-jp}$ such that $\frac 1p = \frac j{mr} + \frac{m-j}{m\tilq}$. Applying the induction hypothesis to $u'$ yields
\begin{align}
    \|u^{(j+1)}\|_{L^p(\R)} \leq C \|u^{(m+1)}\|_{L^r(\R)}^{\frac jm} \|u'\|_{L^{\tilq}(\R)}^{1-\frac jm}.
\end{align}
From the previous induction we also obtain
\begin{align}
    \|u'\|_{L^{\tilq}(\R)} \leq C \|u^{(j+1)}\|_{L^p(\R)}^{\frac1{j+1}}\|u\|_{L^q(\R)}^{1-\frac 1{j+1}}
\end{align}
which holds due to
\begin{align}
    \frac 1\tilq & = \frac1{m-j} \left(\frac mp - \frac jr\right) 
    \overset{\eqref{bedj+1GN}}{=}\frac1{m-j} \left(\frac mp - j\frac {q(m+1)-p(m-j)}{pq(j+1)}\right)
    =\frac 1{(j+1)p} + \frac j{(j+1)q}.
\end{align}
Together, we conclude
\begin{align}
    \|u^{(j+1)}\|_{L^p(\R)} \leq C \|u^{(m+1)}\|_{L^r(\R)}^{\frac {j+1}{m+1}} \|u\|_{L^q(\R)}^{\frac{m-j}{m+1}}.
\end{align}
This completes the induction, and we obtain the validity of \eqref{eq:GagNirR} under condition \eqref{bedGN_R} for arbitrary $j,m \in \N$ with $j<m$ and $\theta=\frac jm$.
\end{proof}

 From Theorem~\ref{GagNir_R}, we derive the corresponding statement on bounded intervals using an extension operator introduced by Rogers \cite{Rog}. Before that, we establish another auxiliary interpolation inequality, noting that an even stronger result is proved in \cite{LiZha}.

\begin{lemma} \label{IntLemAbstr}
Let $I \subset \R$ be an open bounded interval, $1 \leq q,r < \infty$ and $j,m \in \N_0$ where $j<m$. Then, there exists $C=C(q,r,m,I)>0$ such that for all $u \in L^q(I) \cap W^{m,r}(I)$, we have
\begin{align} \label{eq:IntLemAbstr}
    \|u^{(j)}\|_{L^r(I)} \leq \|u^{(m)}\|_{L^r(I)} + C\|u\|_{L^q(I)}.
\end{align}
\end{lemma}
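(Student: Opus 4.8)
The plan is to prove \eqref{eq:IntLemAbstr} in two stages: first a \emph{crude} interpolation estimate of the form
\begin{align} \label{eq:crudeInt}
    \|u^{(j)}\|_{L^r(I)} \le A\,\|u^{(m)}\|_{L^r(I)} + B\,\|u\|_{L^q(I)},
\end{align}
valid on an arbitrary bounded interval with constants $A,B$ depending only on $q,r,m$ and the interval, and then an upgrade of the coefficient of the top-order term to exactly $1$ by a rescaling-and-subdivision argument. The crude estimate is obtained by polynomial approximation: applying the Poincar\'e--Wirtinger inequality successively to $u^{(m-1)},u^{(m-2)},\ldots,u$ and subtracting a suitable monomial at each stage, one constructs a polynomial $P$ of degree at most $m-1$ with
\begin{align} \label{eq:polyapprox}
    \|(u-P)^{(i)}\|_{L^r(I)} \le C(m,r,I)\,\|u^{(m)}\|_{L^r(I)} \qquad \text{for } 0 \le i \le m
\end{align}
(the case $i=m$ being trivial since $P^{(m)}=0$).

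From \eqref{eq:polyapprox} with $i=0,1$ and the one-dimensional embedding $W^{1,r}(I)\hookrightarrow L^\infty(I)$ — equivalently, the fundamental theorem of calculus with a mean-value choice of base point — one obtains $\|u-P\|_{L^\infty(I)}\le C(m,r,I)\,\|u^{(m)}\|_{L^r(I)}$, hence $\|u-P\|_{L^q(I)}\le |I|^{1/q}\|u-P\|_{L^\infty(I)}\le C(m,r,I)\,\|u^{(m)}\|_{L^r(I)}$ for every $q$. Since the polynomials of degree at most $m-1$ form a finite-dimensional space, all norms on it are comparable on the fixed interval, so $\|P^{(j)}\|_{L^r(I)}\le C(m,I)\,\|P\|_{L^q(I)}\le C(m,I)\big(\|u\|_{L^q(I)}+\|u-P\|_{L^q(I)}\big)$. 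Plugging these into $\|u^{(j)}\|_{L^r(I)}\le \|P^{(j)}\|_{L^r(I)}+\|(u-P)^{(j)}\|_{L^r(I)}$ and using \eqref{eq:polyapprox} with $i=j$ gives \eqref{eq:crudeInt}.

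For the second stage I would rescale: writing \eqref{eq:crudeInt} on the reference interval $(0,1)$, with constants $A,B$ now depending only on $q,r,m$, and substituting $x\mapsto \ell x$ shows that on every interval $J$ of length $\ell$,
\begin{align} \label{eq:scaledInt}
    \|u^{(j)}\|_{L^r(J)} \le A\,\ell^{\,m-j}\,\|u^{(m)}\|_{L^r(J)} + B\,\ell^{\,1/r-j-1/q}\,\|u\|_{L^q(J)},
\end{align}
the crucial feature being that the power $m-j\ge 1$ of the length scale multiplying the top-order term is strictly positive. Partitioning $I$ into $N$ subintervals $J_1,\ldots,J_N$ of equal length $\ell=|I|/N$, raising \eqref{eq:scaledInt} to the $r$-th power, summing, and using $(a+b)^r\le 2^{r-1}(a^r+b^r)$ together with $\sum_{s=1}^N\|u\|_{L^q(J_s)}^r\le N^{\max\{0,\,1-r/q\}}\|u\|_{L^q(I)}^r$, one gets after taking $r$-th roots
\begin{align}
    \|u^{(j)}\|_{L^r(I)} \le 2^{1-1/r}A\,\ell^{\,m-j}\,\|u^{(m)}\|_{L^r(I)} + C(N,q,r,m)\,\|u\|_{L^q(I)}.
\end{align}
Choosing $N=N(q,r,m,I)$ large enough that $2^{1-1/r}A\,(|I|/N)^{m-j}\le 1$ then yields \eqref{eq:IntLemAbstr}. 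The one genuinely non-routine point is exactly this last step: pushing the coefficient of $\|u^{(m)}\|_{L^r(I)}$ down to the value $1$, rather than to some finite constant, is possible only because $m-j\ge 1$, so that the top-order contribution carries a strictly positive power of the small length scale while the lower-order term is allowed to blow up as $\ell\to 0$ — and this is also precisely where the hypothesis $j<m$ is used.
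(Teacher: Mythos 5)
Your proof is correct, but it takes a genuinely different route from the paper's. The paper argues indirectly by contradiction: assuming a sequence $u_n$ violates \eqref{eq:IntLemAbstr} with constant $n$, it normalizes to $v_n$ with $\|v_n\|_{W^{m-1,r}(I)}+\|v_n^{(j)}\|_{L^r(I)}=1$, extracts a weak limit in $W^{m,r}(I)$ via the uniform bound $\|v_n\|_{W^{m,r}(I)}\le 2$, upgrades to strong convergence of lower-order derivatives via compact embedding, and observes that $n\|v_n\|_{L^q(I)}\le 1$ forces the limit to vanish while the normalization forces it to be nontrivial --- contradiction. Your proof is direct and constructive: you subtract the unique polynomial $P$ of degree $\le m-1$ matching the averages of $u,u',\ldots,u^{(m-1)}$, control $u-P$ entirely by $\|u^{(m)}\|_{L^r}$ via iterated Poincar\'e--Wirtinger, control $P$ by $\|u\|_{L^q}$ via finite-dimensionality, and then drive the coefficient of the top-order term down to $1$ by rescaling onto $N$ subintervals, exploiting that the power $m-j\ge 1$ of the length scale on the top-order term is strictly positive. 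The trade-offs are the usual ones: the paper's compactness argument is shorter and requires no bookkeeping with constants, but is non-constructive and gives no explicit control on $C$; your argument is longer but yields explicit constants, cleanly isolates where $j<m$ is used, and is stylistically closer to the subdivision arguments the paper already uses in Lemma~\ref{IntLem} and Lemma~\ref{Lemma:NirIneq_n}. One small bonus of your approach: it avoids the (cosmetic) awkwardness of extracting a weak limit of $v_n^{(m)}$ in $L^r$ when $r=1$, where bounded sequences need not have weakly convergent subsequences; the paper's contradiction does not actually need that limit, but your proof sidesteps the issue entirely.
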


\begin{proof}
We argue by contradiction and suppose there exists a sequence $u_n \in W^{m,r}(I)$ fulfilling
    \begin{align} \label{assume}
        \|u_n^{(m)}\|_{L^r(I)} + n\|u_n\|_{L^q(I)} \leq \|u_n^{(j)}\|_{L^r(I)}.
    \end{align}
We define the sequence $v_n \coloneq u_n \left(\|u_n\|_{W^{m-1,r}(I)}+\|u_n^{(j)}\|_{L^r(I)}\right)^{-1} \in W^{m,r}(I)$ such that
\begin{align} \label{constseq}
    \|v_n\|_{W^{m-1,r}(I)}+\|v_n^{(j)}\|_{L^r(I)}=1.
\end{align}
Since \eqref{assume} also holds for the sequence $v_n$, we then obtain
\begin{align} \label{bounded1}
    \|v_n^{(m)}\|_{L^r(I)}+n\|v_n\|_{L^q(I)} 
    \leq \|v_n^{(j)}\|_{L^r(I)}
    \overset{\eqref{constseq}}{\leq} 1-\|v_n\|_{W^{m-1,r}(I)} \leq 1,
\end{align}
and thereby
\begin{align} \label{bounded2}
\begin{split}
    \|v_n\|_{W^{m,r}(I)} & \overset{\eqref{bounded1}}{\leq}  \|v_n\|_{W^{m-1,r}(I)} +1 - n \|v_n\|_{L^q(I)}\\
    &\overset{\eqref{constseq}}{\leq} 2- \|v_n^{(j)}\|_{L^r(I)} - n \|v_n\|_{L^q(I)}
    \leq 2.
\end{split}
\end{align}
Hence, there exist $\tilv \in W^{m,r}(I)$ and a subsequence of $v_n$ (not relabeled), such that $v_n^{(i)} \rightharpoonup \tilv^{(i)}$ in $L^r(I)$ as $n \to \infty$ for all $i \in \{0,\ldots , m\}$. Since $W^{1,r}(I)$ is compactly embedded into $L^r(I)$, we obtain strong convergence of a further subsequence for the derivatives up to order $m-1$, that is
\begin{align}
    \|v_n-\tilv \|_{W^{m-1,r}(I)}+ \|v_n^{(j)}- \tilv^{(j)}\|_{L^r(I)} = \sum_{i=0}^{m-1} \|v_n^{(i)}- \tilv^{(i)}\|_{L^r(I)} +\|v_n^{(j)}- \tilv^{(j)}\|_{L^r(I)} \to 0
\end{align}
as $n \to \infty$. At the same time, we have 
\begin{align}
    \|v_n-\tilv \|_{W^{m-1,r}(I)}+ \|v_n^{(j)}- \tilv^{(j)}\|_{L^r(I)} 
    \overset{\eqref{constseq}}{\geq} 1 - \|\tilv \|_{W^{m-1,r}(I)} - \|\tilv^{(j)}\|_{L^r(I)},
\end{align}
implying $\|\tilv \|_{W^{m-1,r}(I)} + \|\tilv^{(j)}\|_{L^r(I)} \geq 1$.
From \eqref{bounded1}, we further deduce
\begin{align}
    \lim_{n\to \infty} \|v_n\|_{L^q(I)} \leq \lim_{n \to \infty} \frac 1n \left(1-\|v_n^{(m)}\|_{L^r(I)}\right) \overset{\eqref{bounded2}}{=} 0.
\end{align}
Thus, $v_n$ has a subsequence that converges to zero almost everywhere. On the other hand, we find a further subsequence that converges almost everywhere to $\tilv$ due to the strong convergence in $L^r(I)$. Consequently, we have $\tilv = 0$ almost everywhere, which is a contradiction.
\end{proof}

From Theorem~\ref{GagNir_R} and Lemma~\ref{IntLemAbstr} we can now derive the Gagliardo-Nirenberg inequality on bounded intervals using a suitable extension operator for Sobolev spaces from intervals to the real line (see \cite{Rog}). An analogous approach for general bounded Lipschitz domains in $\R^n$ is described in \cite[Theorem~1.3]{LiZha}.

\begin{theorem}[Gagliardo-Nirenberg inequality on bounded intervals] \label{GagNir}
Let $I \subset \R$ be an open bounded interval, $1 \leq p,q,r < \infty$,
and $j,m \in \N_0$ where $j<m$ such that
\begin{align} \label{bedGN}
    \frac 1p = j + \theta \left( \frac1r - m \right) + \frac{1-\theta}{q} \quad \text{and} \quad \frac jm \leq \theta < 1.
\end{align}
Then there exists $C=C(q,r,m,j,\theta,I)>0$ such that for all $u \in L^q(I) \cap W^{m,r}(I)$, we have
\begin{align} \label{eq:GagNir}
    \|u^{(j)}\|_{L^p(I)} \leq C\left( \|u^{(m)}\|^{\theta}_{L^r(I)} \|u\|^{1-\theta}_{L^q(I)} + \|u\|_{L^q(I)}\right).
\end{align}
\end{theorem}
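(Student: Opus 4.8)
The plan is to reduce the bounded-interval estimate \eqref{eq:GagNir} to the real-line version in Theorem~\ref{GagNir_R} via an extension operator, and then to use Lemma~\ref{IntLemAbstr} to replace the full Sobolev norm produced by the extension by the two quantities $\|u^{(m)}\|_{L^r(I)}$ and $\|u\|_{L^q(I)}$ occurring on the right-hand side of \eqref{eq:GagNir}. If $\theta=0$ then \eqref{bedGN} forces $j=0$ and $p=q$, so the inequality is trivial; hence I would at once assume $0<\theta<1$.

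First I would invoke the extension operator of Rogers \cite{Rog}: there is a single bounded linear operator $E$ with $Eu|_I=u$ and $\|Eu\|_{W^{k,s}(\R)}\leq C(k,s,I)\|u\|_{W^{k,s}(I)}$ for the relevant orders and exponents, in particular for $(k,s)=(m,r)$ and $(k,s)=(0,q)$. Setting $v\coloneq Eu$, we have $v\in L^q(\R)$ with $v^{(m)}\in L^r(\R)$, and since the exponent relation \eqref{bedGN} is identical to \eqref{bedGN_R}, Theorem~\ref{GagNir_R} applies to $v$ and gives
\begin{align}
    \|u^{(j)}\|_{L^p(I)} = \|v^{(j)}\|_{L^p(I)} \leq \|v^{(j)}\|_{L^p(\R)} \leq C\,\|v^{(m)}\|_{L^r(\R)}^{\theta}\,\|v\|_{L^q(\R)}^{1-\theta}.
\end{align}

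Next I would bound the two factors by the mapping properties of $E$, namely $\|v\|_{L^q(\R)}\leq C\|u\|_{L^q(I)}$ and $\|v^{(m)}\|_{L^r(\R)}\leq C\|u\|_{W^{m,r}(I)}$. To put the latter into the desired form, apply Lemma~\ref{IntLemAbstr} with $j$ replaced by each $k\in\{0,\ldots,m-1\}$; summing the resulting bounds $\|u^{(k)}\|_{L^r(I)}\leq\|u^{(m)}\|_{L^r(I)}+C\|u\|_{L^q(I)}$ together with the trivial term $k=m$ yields $\|u\|_{W^{m,r}(I)}\leq C\big(\|u^{(m)}\|_{L^r(I)}+\|u\|_{L^q(I)}\big)$. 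Combining the estimates,
\begin{align}
    \|u^{(j)}\|_{L^p(I)} \leq C\big(\|u^{(m)}\|_{L^r(I)}+\|u\|_{L^q(I)}\big)^{\theta}\,\|u\|_{L^q(I)}^{1-\theta},
\end{align}
and the subadditivity of $t\mapsto t^{\theta}$ on $[0,\infty)$ gives $\big(\|u^{(m)}\|_{L^r(I)}+\|u\|_{L^q(I)}\big)^{\theta}\leq\|u^{(m)}\|_{L^r(I)}^{\theta}+\|u\|_{L^q(I)}^{\theta}$, which after multiplication by $\|u\|_{L^q(I)}^{1-\theta}$ produces exactly the two terms on the right-hand side of \eqref{eq:GagNir}.

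I do not expect a serious obstacle. The one point requiring care is that a \emph{single} extension operator be used which is simultaneously bounded $W^{m,r}(I)\to W^{m,r}(\R)$ and $L^q(I)\to L^q(\R)$, so that both $\|v^{(m)}\|_{L^r(\R)}$ and $\|v\|_{L^q(\R)}$ are controlled by the corresponding norms of $u$ for the same extension $v=Eu$; this is precisely why the universal extension operator of \cite{Rog} is invoked rather than an order-dependent construction. A second, minor point is that Theorem~\ref{GagNir_R} is stated for the full range $\tfrac jm\leq\theta<1$, so no additional interpolation in $\theta$ is needed at this stage.
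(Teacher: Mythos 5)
Your argument is correct and follows essentially the same route as the paper's proof: extend to the real line via Rogers' universal extension operator, apply Theorem~\ref{GagNir_R}, bound the extension norms by interior Sobolev norms, reduce the full $W^{m,r}$ norm using Lemma~\ref{IntLemAbstr}, and finish with subadditivity of $t\mapsto t^{\theta}$. Your explicit remarks on the degenerate case $\theta=0$ and on the need for a \emph{single} extension operator bounded simultaneously in $W^{m,r}$ and $L^q$ are both accurate and worth noting, though the paper handles them implicitly.
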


\begin{proof}
    In \cite[Theorem~8]{Rog}, it is shown that for any $1 \leq s < \infty$ and $k \in \N_0$ there exists a bounded linear extension operator $T \colon W^{k,s}(I) \to W^{k,s}(\R)$ and a constant $c=c(s,k,I)>0$ such that for all $u \in W^{k,s}(I)$ we have
    \begin{align} \label{extop}
        \|Tu\|_{W^{k,s}(\R)} \leq c \|u\|_{W^{k,s}(I)}.
    \end{align}
    Thus, with Lemma~\ref{IntLemAbstr} and Theorem~\ref{GagNir_R}, for $u \in L^q(I) \cap W^{m,r}(I)$ we obtain
    \setlength\jot{0.6cm}
    \begin{align}
        \|u^{(j)}\|_{L^p(I)} & \overset{\hphantom{\eqref{extop}}}{\leq} \|(T u)^{(j)}\|_{L^p(\R)}
        \overset{\eqref{eq:GagNirR}}{\leq} C \|Tu\|^{\theta}_{W^{m,r}(\R)} \|Tu\|^{1-\theta}_{L^q(\R)}\\
        &\overset{\eqref{extop}}{\leq} C \Big(\sum_{i=0}^{m-1} \|u^{(i)}\|_{L^{r}(I)} + \|u^{(m)}\|_{L^{r}(I)}\Big)^{\theta} \|u\|^{1-\theta}_{L^q(I)}\\
        & \overset{\eqref{eq:IntLemAbstr}}{\leq} C \Big(\|u\|_{L^q(I)}+ \|u^{(m)}\|_{L^{r}(I)}\Big)^{\theta} \|u\|^{1-\theta}_{L^q(I)}\\
        & \overset{\hphantom{\eqref{extop}}}{\leq} C \Big(\|u^{(m)}\|^\theta_{L^{r}(I)} \|u\|^{1-\theta}_{L^q(I)} + \|u\|_{L^q(I)}\Big),
    \end{align}
    where the last step holds due to the subadditivity of $x \mapsto x^\theta$ for $x \in [0, \infty)$ and $\theta \in (0,1)$.
\end{proof}

\section*{Acknowledgments}
We like to thank Lucas Fix, Lucas Schmitt, Jan-Frederik Pietschmann, and Caterina Zeppieri for highly valuable discussions and advices on the topic. Part of this work was carried out while DB and GG were affiliated with Heidelberg University. HK is funded by the Deutsche Forschungsgemeinschaft (DFG, German Research Foundation) under Germany’s Excellence Strategy EXC-2181/1-39090098 (the Heidelberg STRUCTURES Cluster of Excellence).

\bibliographystyle{alphaabbr}
\bibliography{bib}
\end{document}